\let\@listlla\list
\def\list#1#2{\@listlla{#1}{#2\itemsep=2pt\parsep=0pt\topsep=3pt plus 1pt minus 1 pt}}
\newcommand{\tridiagram}[6]{{\par\par \centering
\@picture(120,120)(0,0) \put(30,95){\makebox(0,0)[r]{$#1$}}
\put(90,30){\makebox(0,0)[tl]{$#3$}}
\put(90,95){\makebox(0,0)[l]{$#2$}}
\put(60,102){\makebox(0,0)[b]{$#4$}}
\put(102,60){\makebox(0,0)[l]{$#6$}}
\put(50,50){\makebox(0,0)[tr]{$#5$}} \thinlines
\put(40,95){\vector(1,0){40}} \put(95,80){\vector(0,-1){40}}
\put(25,80){\vector(1,-1){55}}
\endpicture\par\par}\noindent\ignorespaces}
\def\@map#1#2[#3]{\mbox{$#1 \colon #2 \longrightarrow #3$}}
\def\map#1#2{\@ifnextchar [{\@map{#1}{#2}}{\@map{#1}{#2}[#2]}}
\newcommand{\Aut}[1]{\mbox{\rm Aut}{(#1)}}
\newcommand{\Ker}[1]{\mbox{\rm Ker}(#1)}
\newcommand{\mc}{\mathbb{C}}
\newcommand{\mz}{\mathbb{Z}}
\newcommand{\mn}{\mathbb{N}}
\newcommand{\F}[1]{\ensuremath{\mathbb{F}_{#1}}}
\newtheorem{theorem}{Theorem}[section]
\newtheorem{theo}{Theorem}[]
\newtheorem{defi}{Definition}[]
\newtheorem{proposition}[theorem]{Proposition}
\newtheorem{corollary}[theorem]{Corollary}
\newtheorem{lemma}[theorem]{Lemma}
\theoremstyle{definition}
\newtheorem{definition}[theorem]{Definition}
\newtheorem{remark}[theorem]{Remark}
\newtheorem{claim}{Claim}
\title{A non-trivial example of a free-by-free group with the Haagerup property}
\author{Fran\c{c}ois Gautero}
\date{\today}
\address{Fran\c{c}ois Gautero, Universit\'e de Nice Sophia Antipolis,
Laboratoire de Math\'ematiques J.A.~Dieudonn\'e (UMR CNRS 6621), Parc Valrose, 06108
Nice Cedex 2, France} \email{Francois.Gautero@unice.fr}
\keywords{Haagerup property, a-T-menability, free groups, semidirect products}
\subjclass[2000]{20E22, 20F65, 20E05}
\begin{document}
\begin{abstract}
The aim of this note is to prove that the group of Formanek-Procesi acts properly isometrically on a finite dimensional CAT(0) cube complex. This gives a first example of a non-linear semidirect product between two non abelian free groups which satisfies the Haagerup property.
\end{abstract}

\maketitle

\section*{Introduction}
The Haagerup property is an analytical property introduced in \cite{Haagerup}, where it was proved to hold for free groups:

\begin{defi}[\cite{Haagerup,Cherix}] \hfill

A {\em conditionally negative definite function} on a discrete group $G$ is a function $f \colon G \rightarrow \mc$ such that for any natural integer $n$, for any $\lambda_1, \cdots, \lambda_n \in \mc$  with $\displaystyle \sum^n_{i=1} \lambda_i = 0$, for any $g_1,\cdots,g_n$ in $G$ one has $$\sum_{i,j} \overline{\lambda}_i \lambda_j f(g^{-1}_i g_j) \leq 0.$$

The group $G$ {\em satisfies the Haagerup property}, or {\em is an a-T-menable group}, if and only if there exists a proper conditionnally negative definite function on $G$.
\end{defi}

Groups with the Haagerup property encompass the class of amenable groups, but form a much wider class. Free groups were in some sense the ``simplest'' non-amenable groups with the Haagerup property. Haagerup property has later been renewed by the work of Gromov, where it appeared under the term of a-T-menability. It is now most easily presented as a strong converse to the famous Kazhdan's Property (T) in the sense that a group satisfies both the Haagerup and (T) properties if and only if it is a compact group (a finite group in the discrete case). We refer the reader to \cite{Cherix} for a detailed background and history of Haagerup property.

What do we know about extensions of a-T-menable groups ? By \cite{Jolissaint} a semidirect product of an a-T-menable group with an amenable one is a-T-menable. For instance any semidirect product $\F{n} \rtimes \mz$, where $\F{n}$ denotes the rank $n$ free group, is a-T-menable. Also it has been proved recently that any wreath-product $\F{n} \wr \F{k}$ is a-T-menable (this is a particular case of the various theorems in \cite{Yves} - see also the preliminary paper \cite{Yves0}). But such a result does not hold anymore when considering arbitrary Haagerup-by-Haagerup groups. The most famous counter-example is given by \cite{Burger}: for any free subgroup $\F{k}$ of $\mathrm{SL}_2(\mz)$ the semidirect product
$\mz^2 \rtimes \F{k}$ satisfies a relative version of Kazhdans's Property (T) and thus is not Haagerup (see also \cite{delaHarpeValette} for the relative property (T) of $\mz^2 \rtimes \mathrm{SL}(2,\mz)$ and pass to a finite index free subgroup of $\mathrm{SL}_2(\mz)$ - since the Haagerup property holds for a group $G$ if and only if it holds for a finite index subgroup of $G$, this gives an example as announced).

To what extent can this result be generalized to semidirect products $\F{n} \rtimes \F{k}$ with both $n$ and $k$ greater or equal to $2$ ? These semidirect products lie in some ``philosophical'' sense just ``above'' the groups $\mz^2 \rtimes \F{k}$ (substitute the amenable group $\mz^2$ by the free group $\F{n}$, the simplest example of an a-T-menable but not amenable group) but also just above the groups $\F{n} \rtimes \mz$ (substitute the free abelian group $\mz$ by the free non-abelian group $\F{k}$). The former analogy might lead to think that no group $\F{k} \rtimes \F{n}$ ($n, k \geq 2$) satisfies the Haagerup property whereas the latter one might lead to think that any such group is an a-T-menable group.  The purpose of this short paper is to present a first example of a non-linear a-T-menable semidirect product $\F{n} \rtimes \F{k}$ ($n, k \geq 2$). More precisely:

\begin{defi} \hfill

Let $n$ be any integer greater or equal to $2$. The {\em ${n}^{\mathrm{th}}$-group of Formanek - Procesi} is the semidirect product $\F{n+1} \rtimes_\sigma \F{n}$ where
$\F{n} = \langle t_1,\cdots,t_n \rangle$
and $\F{n+1} = \langle x_1,\cdots,x_{n},y \rangle$ are the rank $n$ and rank $n+1$ free groups and
$\sigma \colon \F{n} \hookrightarrow \Aut{\F{n+1}}$ is the
monomorphism defined as follows:

For $i,j \in \{1,\cdots,n\}$, $\sigma(t_i)(x_j) = x_j$
and $\sigma(t_i)(y) = y x_i$.
\end{defi}

As claimed by this definition, it is easily checked that $\sigma$ is a monomorphism. These groups were introduced in \cite{Formanek} to prove that $\Aut{\F{n}}$ is non linear for $n \geq 3$.

\begin{theo} \hfill
\label{the theorem}

The ${n}^{\mathrm{th}}$-group of Formanek - Procesi acts properly isometrically on some $(2n+2)$-dimensional CAT(0) cube complex and in particular satisfies the Haagerup property.
\end{theo}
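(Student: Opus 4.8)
The plan is to realise the Haagerup property through the geometric route announced in the statement: I will build a proper action of $G=\F{n+1}\rtimes_\sigma\F{n}$ on a finite--dimensional CAT(0) cube complex, and then invoke the standard fact that such an action yields a proper conditionally negative definite function, and hence the Haagerup property. Concretely, I will produce a \emph{space with walls} (in the sense of Haglund--Paulin) on which $G$ acts properly and with at most $2n+2$ pairwise--crossing walls, and feed it into Sageev's construction; the resulting cube complex has dimension equal to the maximal number of pairwise--crossing walls (Chatterji--Niblo, Nica), and the $\ell^1$/median metric counting separating hyperplanes is of negative type, so properness of the action translates into properness of the cnd function (Cherix--Martin--Valette; see \cite{Cherix}). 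Thus the whole problem reduces to the combinatorial construction of the walls together with two estimates: properness of the associated wall pseudometric on $G$, and the bound $2n+2$ on the number of mutually crossing walls.

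For the walls I will exploit the two natural projections of $G$. First, the semidirect--product projection $p\colon G\to\F{n}$, $(w,s)\mapsto s$, lets me pull back the edge--walls of the Cayley tree $T_n$ of $\F{n}$; these \emph{horizontal} walls separate $g,g'$ by exactly $|p(g)^{-1}p(g')|_{\F{n}}$ and control the base direction, but are blind to the fibre $\F{n+1}$. Viewing $G$ as the multiple mapping torus of the automorphisms $\sigma(t_1),\dots,\sigma(t_n)$, the tree $T_n$ is precisely the Bass--Serre tree of the induced splitting, with vertex stabilisers the conjugates of $\F{n+1}$; the action on $T_n$ alone is far from proper, since those stabilisers are infinite. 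I therefore need \emph{vertical} walls detecting the fibre, organised as a $G$--equivariant wall structure on the vertex group $\F{n+1}$ that is carried coherently across the edges of $T_n$.

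Here lies the main obstacle. The edge identifications of the splitting are given by $\sigma(t_i)\colon y\mapsto yx_i,\ x_j\mapsto x_j$, and these do \emph{not} preserve the word metric of $\F{n+1}$ (the image of $y$ has length two), so the naive fibrewise tree--walls of the Cayley tree of $\F{n+1}$ fail to be $G$--invariant. My strategy to repair this is to wall the fibre using only $\sigma$--equivariant data. The automorphisms $\sigma(t_i)$ fix the free factor $\langle x_1,\dots,x_n\rangle$ pointwise and preserve the $y$--exponent homomorphism $\epsilon\colon\F{n+1}\to\mz$; moreover $(w,s)\mapsto\epsilon(w)$ is a genuine homomorphism $G\to\mz$. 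Accordingly I will use the decomposition $\F{n+1}\cong\langle x_1,\dots,x_n\rangle\ast\langle y\rangle$ and build the vertical walls from (i) the $\epsilon$--walls (the line $\mz$ with $G$ acting through that homomorphism), which are automatically $G$--equivariant, and (ii) walls dual to the $x_i$--letters of the normal form, \emph{sheared} along the $y$--exponent so that the unipotent twist $y\mapsto yx_i$ acts as a wall automorphism rather than merely as a combinatorial isometry. Since $x_i=[y^{-1},t_i]$ lies in the commutator subgroup and dies in every abelian quotient of $G$, these non--abelian sheared fibre walls are exactly what is needed to separate the $x_i$; making their transformation law coherent under the $\F{n+1}$--translations and the $\F{n}$--twist simultaneously is the delicate point.

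Finally I will verify the two estimates. For properness I will show that the number of walls separating the identity from $g=(w,s)$ bounds below a proper function of $g$, dominating both $|s|_{\F{n}}$ (through the horizontal walls) and the word length $|w|_{\F{n+1}}$ (through the $\epsilon$--walls and the sheared $x_i$--walls), so that only finitely many $g$ lie within any bounded wall--distance of the identity. For the dimension I will bound the size of a set of pairwise--crossing walls: walls pulled back from the single tree $T_n$ are pairwise non--crossing, and a bookkeeping of the admissible crossings between the horizontal family, the $\epsilon$--family, and the sheared $x_i$--families yields the bound $2n+2$. Combining the proper wall space with Sageev's construction then gives the asserted proper isometric action on a $(2n+2)$--dimensional CAT(0) cube complex, whose median metric is of negative type and hence furnishes the Haagerup property. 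The step I expect to resist hardest is (ii): keeping the sheared fibre walls genuinely $G$--equivariant while simultaneously holding the crossing number at $2n+2$ and keeping the pseudometric proper, since enlarging the wall family to secure properness naturally tends to create new crossings and inflate the dimension.
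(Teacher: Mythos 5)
Your overall architecture is the same as the paper's: build a $G$-invariant space with walls structure on $G$, prove the action is proper, and feed it to Chatterji--Niblo/Nica to get a proper isometric action on a CAT(0) cube complex whose dimension is the maximal number of pairwise crossing walls. You also correctly isolate the central difficulty: the walls coming from the Bass--Serre tree of $\F{n}$ have infinite ($\F{n+1}$-conjugate) stabilizers, and the naive fibre walls of $\F{n+1}$ are not $G$-equivariant because $\sigma(t_i)\colon y \mapsto yx_i$ does not preserve the word metric of the fibre. The genuine gap is that the object meant to resolve this difficulty --- your ``sheared $x_i$-walls'' --- is never constructed: it is described only by the properties it ought to have, and you yourself flag it as the step expected to ``resist hardest.'' But every load-bearing claim of the proof (that these are walls at all, i.e.\ two-class partitions with finitely many walls separating any two points; properness of the wall pseudometric; the bound of two pairwise-crossing walls per family, hence the total $2n+2$) rests on that undefined family. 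This is precisely where the entire content of the paper lies: the vertizontal $i$-walls are defined concretely as the two components of $\Gamma_c \setminus E_i$, where $\Gamma_c$ is the Cayley graph for $\{y,t_1,\dots,t_n\}$, $E_i = H_i(e,t_i)^{\pm 1}$, and $H_i = \langle x_{i+1},t_{i+1}, y x_i y^{-1} t_i, x_i t_i^{-1}\rangle$ (for $n=2$); proving that $E_i$ really separates $\Gamma_c$ into exactly two pieces takes a long chain of lemmas (the analogue of your ``coherent transformation law''), and the finiteness, properness and crossing statements are then separate, nontrivial arguments using the structure of $H_i$. A plan that postulates the hard object rather than producing it is not a proof.

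A second, more concrete weakness: your fibre-detecting walls of type (i) are the pullbacks of the walls of $\mz$ under the $y$-exponent homomorphism $\epsilon$, which is strictly coarser than what the paper uses. The paper's horizontal walls are dual to individual $y$-edges (the left translates of $(\mathcal Y,\mathcal Y^c)$, each cut by the edge family $\bigcup_{t\in\F{n}}(gt,gty)^{\pm 1}$), so that, e.g., the two $y$-letters of the reduced word $y x_i^k y^{-1}$ already cross two distinct horizontal walls. In your scheme the element $y x_i^k y^{-1}$ has trivial image both under the projection $G\to\F{n}$ and under $\epsilon$, so \emph{all} of the separation for the infinite family $\{y x_i^k y^{-1}\}_{k\in\mz}$ must be supplied by the undefined sheared walls; the same applies to products such as $y x_1^{k_1} y^{-1}\, y x_2^{k_2} y^{-1}\cdots$ after reduction. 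So even granting an equivariant construction of type (ii), your properness estimate would have to carry strictly more weight than the corresponding step in the paper, and nothing in the proposal indicates how. To repair the proposal you would either need to replace the $\epsilon$-walls by $y$-edge walls as in the paper, or prove a properness statement for the sheared family alone on the kernel of $(p,\epsilon)$; neither is routine.
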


Let us briefly recall that a {\em cube complex} is a metric polyhedral
complex in which each cell is isomorphic to the Euclidean cube $[0, 1]^n$
and the gluing maps are isometries. A cube complex is called {\em CAT(0)}
if the metric induced by the Euclidean metric on the cubes turns it
into a CAT(0) metric space (see \cite{Bridson}).
In order to get the above statement, we prove the existence of a
``space with walls'' structure as introduced by Haglund and Paulin \cite{HaglundPaulin}. A theorem of Chatterji-Niblo \cite{ChatterjiNiblo}
or Nica \cite{Nica} (for similar constructions in other settings, see also \cite{NibloRoller} or \cite{Sageev}) gives the announced action on a CAT(0) cube complex.

Even if the proof is quite simple, we think however that our example is worth of interest:
thanks to the profound structure theorem of \cite{BFHpolynomial} about subgroups of polynomially growing automorphisms (our example is a subgroup of linearly growing automorphisms), a more elaborated version of the construction presented here should hopefully lead to a positive answer to the following question, well-known among experts in the field:

\medskip

\noindent {\em Question (folklore): Does any semidirect product $\F{n} \rtimes \F{k}$ over a free subgroup of polynomially growing outer automorphisms satisfy the Haagerup property ?}

\medskip

We guess in fact that any semidirect product $\F{n} \rtimes_\sigma \F{k}$ with $\sigma(\F{k})$ a free subgroup of unipotent polynomially growing outer automorphisms acts properly isometrically on some finite dimensional CAT(0) cube complex the dimension of which depends on the way strata interleave with each other, see the brief discussion at the end of the paper. Since any subgroup of polynomially growing automorphisms admits a unipotent one as a finite-index subgroup \cite{BFHpolynomial}, this would imply a positive answer to the above question.

\section{Preliminaries}

\subsection{Notations}

We will prove Theorem \ref{the theorem} with $n=2$. The reader will easily generalize the construction to any integer $n \geq 2$. With the notations of Theorem \ref{the theorem}, the group $G := \F{3} \rtimes_\sigma \F{2}$ admits $$\langle x_i,y,t_j \mbox{ ; } t^{-1}_j x_i t_j = x_i \mbox{, } t^{-1}_j y t_j = y x_j \mbox{, } i,j=1,2 \rangle$$ as a presentation. We denote by $S$ the generating set $\{x_1,x_2,y,t_1,t_2\}$ of $G$. In the structure of semidirect product $\F{3} \rtimes_\sigma \F{2}$ we will term {\em horizontal subgroup} the normal subgroup $\F{3} = \langle x_1,x_2,y \rangle$ and {\em vertical subgroup} the subgroup $\F{2} = \langle t_1,t_2 \rangle$. Any element is uniquely written as a concatenation $t w$ where $t$ is a {\em vertical element}, i.e.~an element in the vertical subgroup, and $w$ is a {\em horizontal element}, i.e.~an element in the horizontal subgroup. We denote by $\mathcal A$ the alphabet
over $S \cup S^{-1}$ and by $\pi$ the map which, to a given word in $\mathcal A$, assigns
the unique element of $G$ that it defines. A {\em reduced word} is a word without any cancellation $x x^{-1}$ or $x^{-1} x$.  Words consisting of vertical (resp.~horizontal) letters are {\em vertical} (resp.~{\em horizontal}) {\em words}.
A {\em reduced representative} of an element $g$ in $G$ is a reduced word in the alphabet $\mathcal A$ whose image under $\pi$ is $g$.

We denote by $\Gamma$ the Cayley graph of $G$ with respect to $S$. Since the vertices of $\Gamma$ are in bijection with the elements of $G$, we do not distinguish between a vertex of $\Gamma$ and the element of $G$ associated to this vertex. The edges of $\Gamma$ are oriented: an edge of $\Gamma$ is denoted by the pair ``(initial vertex of the edge, terminal vertex of the edge)''. The edges are labeled with the elements in $S \cup S^{-1}$. For instance the edge $(g,gx_i)$ has label $x_i$, whereas the edge $(gx_i,g)$ has label $x^{-1}_i$. If $x$ is the label of an edge we will term this edge {\em $x$-edge}. If $E$ is an oriented edge then $E^{-1}$ is the same edge with the opposite orientation. For instance
$(g,gt_i)^{-1} = (gt_i,g)$. A {\em reduced edge-path} in $\Gamma$ is an edge-path which reads a reduced word. When considering $\Gamma$ as a cellular complex, there is exactly one $1$-cell associated to the two edges $(g,gs)$ ($s \in S$) and $(gs,g)$, and each orientation of this $1$-cell corresponds to one of these edges.

\begin{lemma} \hfill
\label{yves}

With the notations above, the group $G$ admits $S_{\mathrm{min}} :=  \{y,t_1,t_2\} \subset S$ as a generating set.
\end{lemma}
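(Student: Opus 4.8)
The plan is to exploit that $G$ comes equipped with the generating set $S = \{x_1,x_2,y,t_1,t_2\}$, so that it suffices to check that the two ``missing'' generators $x_1$ and $x_2$ already lie in the subgroup $\langle S_{\mathrm{min}}\rangle$ generated by $S_{\mathrm{min}} = \{y,t_1,t_2\}$. Indeed, once $x_1,x_2 \in \langle S_{\mathrm{min}}\rangle$, the subgroup $\langle S_{\mathrm{min}}\rangle$ contains every element of $S$, hence contains $\langle S\rangle = G$, and therefore equals $G$.

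To produce the required expressions I would simply read off the defining relations $t_j^{-1} y t_j = y x_j$ for $j=1,2$ and solve them for $x_j$. Left-multiplying by $y^{-1}$ yields
$$x_j = y^{-1} t_j^{-1} y t_j,$$
which is a reduced word in the letters $y,t_1,t_2$ and their inverses alone. Thus $x_1 = y^{-1} t_1^{-1} y t_1$ and $x_2 = y^{-1} t_2^{-1} y t_2$ both belong to $\langle S_{\mathrm{min}}\rangle$, which is exactly what the reduction step requires.

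Combining the two observations gives $\langle S_{\mathrm{min}}\rangle = G$, as claimed. I do not expect any genuine obstacle here: the entire content is the elementary rearrangement of the single relation governing $\sigma(t_j)(y)$, and the remaining relations $t_j^{-1} x_i t_j = x_i$ play no role at all. It is worth recording for later use that each horizontal generator is in fact realized as a commutator $x_j = [y^{-1},t_j^{-1}]$ involving only $y$ and the corresponding $t_j$; this explicit form is likely to be convenient when translating between words over $S$ and words over $S_{\mathrm{min}}$ in the subsequent ``space with walls'' construction.
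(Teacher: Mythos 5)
Your proof is correct and is essentially the paper's own argument: the paper also verifies $x_i = y^{-1}t_i^{-1}y t_i$ from the defining relation $t_i^{-1}y t_i = y x_i$ and concludes immediately. The extra remarks (the reduction to showing $x_1,x_2 \in \langle S_{\mathrm{min}}\rangle$ and the commutator form) are fine but add nothing beyond the paper's one-line proof.
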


\begin{proof}
For $i \in \{1,2\}$ we have $x_i = y^{-1} t^{-1}_i y t_i$ hence the lemma.
\end{proof}

As a straightforward consequence:

\begin{corollary} \hfill

Let $\chi_i$ be the set of $1$-cells of $\Gamma$ associated to edges with label $x^{\pm 1}_i$, and let $\Gamma_c$ be the closure of the complement of $\chi_1 \cup \chi_2$ in $\Gamma$. Then $\Gamma_c$ is ($G$-equivariantly homeomorphic to) the Cayley graph of $G$ with respect to the generating set $S_{\mathrm{min}}$ defined in Lemma \ref{yves}.
\end{corollary}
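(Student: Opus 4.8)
The plan is to identify the cell structures of the two complexes directly, since the statement is essentially bookkeeping. First I would recall that, by definition, the Cayley graph $\Gamma$ of $G$ with respect to $S$ has the elements of $G$ as its $0$-cells and, for every $g \in G$ and every $s \in S$, one $1$-cell joining $g$ to $gs$, whose label records $s$. Deleting from $\Gamma$ the $1$-cells in $\chi_1 \cup \chi_2$, i.e.\ exactly those carrying a label $x_1^{\pm 1}$ or $x_2^{\pm 1}$, leaves precisely the $1$-cells labeled by an element of $S_{\mathrm{min}} \cup S_{\mathrm{min}}^{-1}$, together with whatever $0$-cells survive in the closure.

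The single point requiring any care is that passing to the closure discards no vertex, and this is exactly where Lemma \ref{yves} is used. Since $S_{\mathrm{min}} = \{y,t_1,t_2\}$ already generates $G$, every element $g \in G$ is an endpoint of some $y$-edge or $t_j$-edge; thus every $0$-cell of $\Gamma$ is incident to a $1$-cell lying outside $\chi_1 \cup \chi_2$ and therefore lies in $\Gamma_c$. Consequently $\Gamma_c$ has the full vertex set $G$ and exactly the $1$-cells labeled by $S_{\mathrm{min}} \cup S_{\mathrm{min}}^{-1}$.

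Next I would introduce the comparison map $\Phi \colon \Gamma_c \to \Gamma(G,S_{\mathrm{min}})$, taken to be the identity on $0$-cells and sending the $1$-cell of $\Gamma_c$ from $g$ to $gs$ (with $s \in S_{\mathrm{min}}$) affinely onto the corresponding $1$-cell of $\Gamma(G,S_{\mathrm{min}})$. By the cell count just made, the two complexes share the same vertex set and the same family of $1$-cells with the same incidence data, so $\Phi$ is a cellular bijection that is a homeomorphism on each closed cell and respects the attaching maps; hence it is a homeomorphism of the underlying spaces.

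Finally, equivariance is immediate: the left action of $h \in G$ sends an edge $(g,gs)$ to $(hg,hgs)$ and leaves its label $s$ unchanged, so left multiplication preserves the partition of $1$-cells into $\chi_1 \cup \chi_2$ and its complement and commutes with $\Phi$. I do not expect a genuine obstacle: all the content sits in Lemma \ref{yves}, which ensures that deleting the $x_i$-edges destroys neither connectivity nor vertices, making the corollary the announced straightforward consequence.
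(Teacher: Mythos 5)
Your proposal is correct and takes the same route the paper leaves implicit: the corollary is pure bookkeeping of cells, with Lemma \ref{yves} guaranteeing that the label set $S_{\mathrm{min}}$ is a genuine generating set, so that the resulting complex is connected and is \emph{the} Cayley graph of $G$ with respect to $S_{\mathrm{min}}$. One small misattribution worth noting: the survival of every vertex under taking the closure does not need Lemma \ref{yves} at all (each $g$ is an endpoint of the closed $1$-cell joining $g$ to $gy$, which lies outside $\chi_1\cup\chi_2$ simply because $y\in S$); the lemma's real role is only the connectivity/generation point.
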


\subsection{Space with walls structure}

{\em Spaces with walls} were introduced in \cite{HaglundPaulin} in order to check the Haagerup property. A space with walls is a pair $(X,\mathcal W)$ where $X$ is a set and $\mathcal W$ is a family of partitions of $X$ into two classes, called {\em walls}, such that for any two distinct points $x, y$ in $X$ the number of walls $\omega(x,y)$ is finite. This is the {\em wall distance} between $x$ and $y$. We say that a discrete groups {\em acts properly} on a space with walls $(X,\mathcal W)$ if it leaves invariant $\mathcal W$ and for some (and hence any) $x \in X$ the function $g \mapsto \omega(x,gx)$ is proper on $G$.

\begin{theorem}[\cite{HaglundPaulin}] \hfill

 A discrete group $G$ which acts properly on a space with walls satisfies the Haagerup property.
 \end{theorem}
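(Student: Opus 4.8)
The plan is to manufacture from the wall structure a conditionally negative definite function on $G$ and to read off its properness directly from the hypothesis. First I would fix a base point $x_0 \in X$ and, after arbitrarily orienting each wall $w \in \mathcal{W}$ (designating one of its two classes as a ``positive'' half-space $w^+$), associate to every point $x \in X$ the set $A_x := \{ w \in \mathcal{W} : x \in w^+ \}$ of walls on whose positive side $x$ lies. The point of this bookkeeping is that for any two points $x,y$ the symmetric difference $A_x \triangle A_y$ is exactly the set of walls separating $x$ from $y$, so its cardinality equals the wall distance $\omega(x,y)$ and is \emph{finite} by the defining axiom of a space with walls.

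Next I would build a Hilbert space embedding. Working in $\ell^2(\mathcal{W})$, define $f \colon X \to \ell^2(\mathcal{W})$ by $f(x) := \mathbf{1}_{A_x} - \mathbf{1}_{A_{x_0}}$. Although each $\mathbf{1}_{A_x}$ may have infinite support, the difference is supported on the finite set $A_x \triangle A_{x_0}$ and therefore genuinely lies in $\ell^2(\mathcal{W})$; this is the one place where the finiteness axiom is essential, and it is why one subtracts the base-point contribution rather than using $\mathbf{1}_{A_x}$ directly. A direct computation then gives $\|f(x) - f(y)\|^2 = |A_x \triangle A_y| = \omega(x,y)$, exhibiting the wall distance as the pull-back of the squared Hilbert norm. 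In particular $\omega$ is a negative-definite kernel on $X$: for scalars $\lambda_1,\dots,\lambda_n$ with $\sum_i \lambda_i = 0$ and points $z_i \in X$, the standard expansion collapses to $\sum_{i,j} \overline{\lambda_i}\lambda_j\, \omega(z_i,z_j) = -2\,\bigl\|\sum_i \lambda_i f(z_i)\bigr\|^2 \le 0$.

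Then I would descend this kernel to the group. Because $G$ leaves $\mathcal{W}$ invariant, a wall separates $x$ from $y$ if and only if its $g$-translate separates $gx$ from $gy$; hence $\omega(gx,gy) = \omega(x,y)$, i.e. $\omega$ is $G$-invariant (note that this uses only un-oriented separation, so the orientation choices above are immaterial here). Setting $\psi(g) := \omega(x_0, g x_0)$ and using invariance to rewrite $\psi(g_i^{-1}g_j) = \omega(g_i x_0, g_j x_0)$, the conditional negative definiteness of $\psi$ on $G$ follows at once from that of the kernel $\omega$ applied to the orbit points $z_i := g_i x_0$. Finally, properness of $\psi$ is nothing but the hypothesis that $g \mapsto \omega(x_0, g x_0)$ is proper on $G$, so $\psi$ is a proper conditionally negative definite function and $G$ has the Haagerup property.

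I expect the only genuinely delicate step to be the well-definedness of the embedding $f$: one must check both that the orientation choices do not matter (only symmetric differences ever enter) and that the finiteness axiom is precisely what keeps each $f(x)$ inside $\ell^2(\mathcal{W})$. Everything downstream --- the negative-definite kernel identity, the $G$-invariance, and the transfer to a conditionally negative definite function on $G$ --- is then formal.
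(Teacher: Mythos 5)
Your proof is correct, and it is essentially the standard argument: the paper itself gives no proof of this statement, quoting it directly from Haglund--Paulin, and your half-space embedding $x \mapsto \mathbf{1}_{A_x} - \mathbf{1}_{A_{x_0}} \in \ell^2(\mathcal{W})$, realizing the wall distance as a conditionally negative definite kernel and then restricting to an orbit, is precisely the proof found in that cited reference. Your attention to the two delicate points (base-point subtraction to stay in $\ell^2$, and independence of the orientation choices) is exactly right and complete.
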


 In order to get Theorem \ref{the theorem} we will need the following (stronger) result (we refer to \cite{Nica} for a similar statement):

\begin{theorem}[\cite{ChatterjiNiblo}] \hfill
\label{ceci est un rappel}

Let $G$ be a discrete group which acts properly on a space with walls $(X,\mathcal W)$. Say that two walls $(u,u^c) \in \mathcal W$ and $(v,v^c) \in \mathcal W$ {\em cross}
 if all four intersections $u \cap v$, $u \cap v^c$, $u^c \cap v$ and $u^c \cap v^c$
 are non-empty. Let $I({\mathcal W})$ be the (possibly infinite) supremum of the cardinalities
of finite collections of walls which pairwise cross. Then $G$ acts properly isometrically on some $I({\mathcal W})$-dimensional CAT(0) cube complex. In particular it satisfies the Haagerup property.
\end{theorem}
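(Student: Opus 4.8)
The plan is to carry out the Sageev construction of the cube complex dual to a wallspace, in the form refined by Nica and Chatterji--Niblo, and to read off both the properness of the action and the dimension bound directly from the wall combinatorics. First I would set up the dual complex. Each wall $(u,u^c)$ has two associated \emph{halfspaces}, namely the sets $u$ and $u^c$; write $\mathcal{H}$ for the set of all halfspaces, equipped with the involution sending a halfspace to its complement and the partial order given by inclusion of subsets of $X$. An \emph{orientation} (or ultrafilter) is a subset $\sigma \subseteq \mathcal{H}$ containing exactly one halfspace from each wall, and $\sigma$ is \emph{coherent} if it is upward closed for inclusion (if $h \in \sigma$ and $h \subseteq k$ then $k \in \sigma$). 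Each point $x \in X$ gives the principal orientation $\sigma_x = \{h \in \mathcal{H} : x \in h\}$, which is visibly coherent. The vertex set of the cube complex $C$ is the set of coherent orientations $\sigma$ that are \emph{almost principal}, meaning $\sigma$ differs from some $\sigma_x$ in only finitely many halfspaces; here finiteness of $\omega(x,y)$ for all $x,y$ is exactly what guarantees that all the $\sigma_x$ lie in one such class and that the edges defined next have finite length. Two vertices are joined by an edge when their orientations differ on exactly one wall, and $2^n$ vertices span an $n$-cube when they arise from a single orientation by flipping all subsets of a fixed set of $n$ walls.

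Next I would verify that $C$ is CAT(0) by Gromov's criterion: a cube complex is CAT(0) if and only if it is simply connected and the link of every vertex is a flag simplicial complex. The content here is that the flag condition translates exactly into the \emph{crossing} hypothesis. Indeed, two walls cross precisely when no halfspace of one is comparable under inclusion with a halfspace of the other --- equivalently when the four intersections $u \cap v$, $u \cap v^c$, $u^c \cap v$ and $u^c \cap v^c$ are all non-empty --- and coherence of an orientation is preserved exactly under independent flips of pairwise-transverse families of walls. Hence a family of edges out of a vertex spans a cube if and only if the corresponding walls pairwise cross, which is the flag condition on the link. Simple connectivity is then established in the usual way, by showing that $C$ is connected (one moves between almost-principal orientations a single wall-flip at a time) and that every loop bounds a disc, the squares being filled precisely by the crossing condition.

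I would then treat the group action and its properness. The group $G$ preserves $\mathcal{W}$, hence permutes $\mathcal{H}$ respecting the involution and the inclusion order, so it acts on coherent almost-principal orientations; since $g\sigma_x = \sigma_{gx}$ it preserves the vertex set and acts by cubical automorphisms, that is by isometries of the piecewise-Euclidean CAT(0) metric. The map $x \mapsto \sigma_x$ is $G$-equivariant, and the combinatorial ($\ell^1$) distance between $\sigma_x$ and $\sigma_y$ equals the number of walls separating $x$ from $y$, namely $\omega(x,y)$. Consequently $g \mapsto d_{\ell^1}(\sigma_x, g\sigma_x) = \omega(x,gx)$ is proper on $G$ by hypothesis; since the $\ell^1$ and CAT(0) metrics are bi-Lipschitz equivalent on a finite-dimensional cube complex (with constant depending on the dimension), the isometric action on $C$ is proper as well. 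The dimension is finally identified with $I(\mathcal{W})$: an $n$-cube is by construction a set of $n$ pairwise-crossing walls, and conversely $n$ pairwise-crossing walls yield the $2^n$ coherent orientations obtained by independent flips, so the top dimension of $C$ is exactly the supremum of the cardinalities of pairwise-crossing families, that is $I(\mathcal{W})$.

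I expect the main obstacle to be the CAT(0) verification --- concretely, proving that every link is flag and that $C$ is simply connected --- together with the careful identification of $n$-cubes with $n$-element pairwise-crossing families. This is where one must check that coherence is genuinely preserved under independent flips of transverse walls, and that the almost-principal condition confines the whole construction to a single, well-behaved component on which the combinatorial metric is finite and the vertex set non-empty. Once finite dimensionality is in hand, the transfer from the combinatorial metric to the CAT(0) metric, and hence the passage from ``acts properly on the wallspace'' to ``acts properly isometrically on $C$,'' is routine.
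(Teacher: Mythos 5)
The paper offers no proof of this theorem: it is recalled verbatim from \cite{ChatterjiNiblo} (with \cite{Nica} noted for a similar statement) and is simply applied later, so the only benchmark is the argument in the cited literature. Your sketch correctly reproduces that argument --- the Sageev-style dual complex of coherent, almost-principal ultrafilters on halfspaces, CAT(0) via Gromov's flag-link criterion, equivariance and properness via the identity $d_{\ell^1}(\sigma_x,g\sigma_x)=\omega(x,gx)$, and the identification of $n$-cubes with $n$-element pairwise-crossing families of walls --- and the one step you should flag explicitly, the bi-Lipschitz transfer from the $\ell^1$ metric to the CAT(0) metric, is valid exactly when $I(\mathcal W)$ is finite, which holds in this paper's application since $I(\mathcal W)=6$.
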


\section{Horizontal and vertical walls}

\subsection{Definition and stabilizers}

\begin{definition} \hfill
\label{ensemble elementaire}

The {\em horizontal block ${\mathcal Y}$} is the set of all the elements in $G$ which admit $t y w$, with $t$ a vertical word and $w$ a horizontal word, as a reduced representative.
 A {\em horizontal wall} is a left-translate $g({\mathcal Y},{\mathcal Y}^c)$, $g \in G$.

The {\em vertical $j$-block ${\mathcal V}_j$} is the set of all the elements in $G$ which admit $t_j t w$, with $t$ a vertical word and $w$ a horizontal word, as a reduced representative.
A {\em vertical $j$-wall} is a left-translate  $g({\mathcal V}_j,{\mathcal V}^c_j)$, $g \in G$.
\end{definition}

\begin{figure}[htbp]
{\centerline{\includegraphics[height=8cm, viewport = 95 530 460 820,clip]{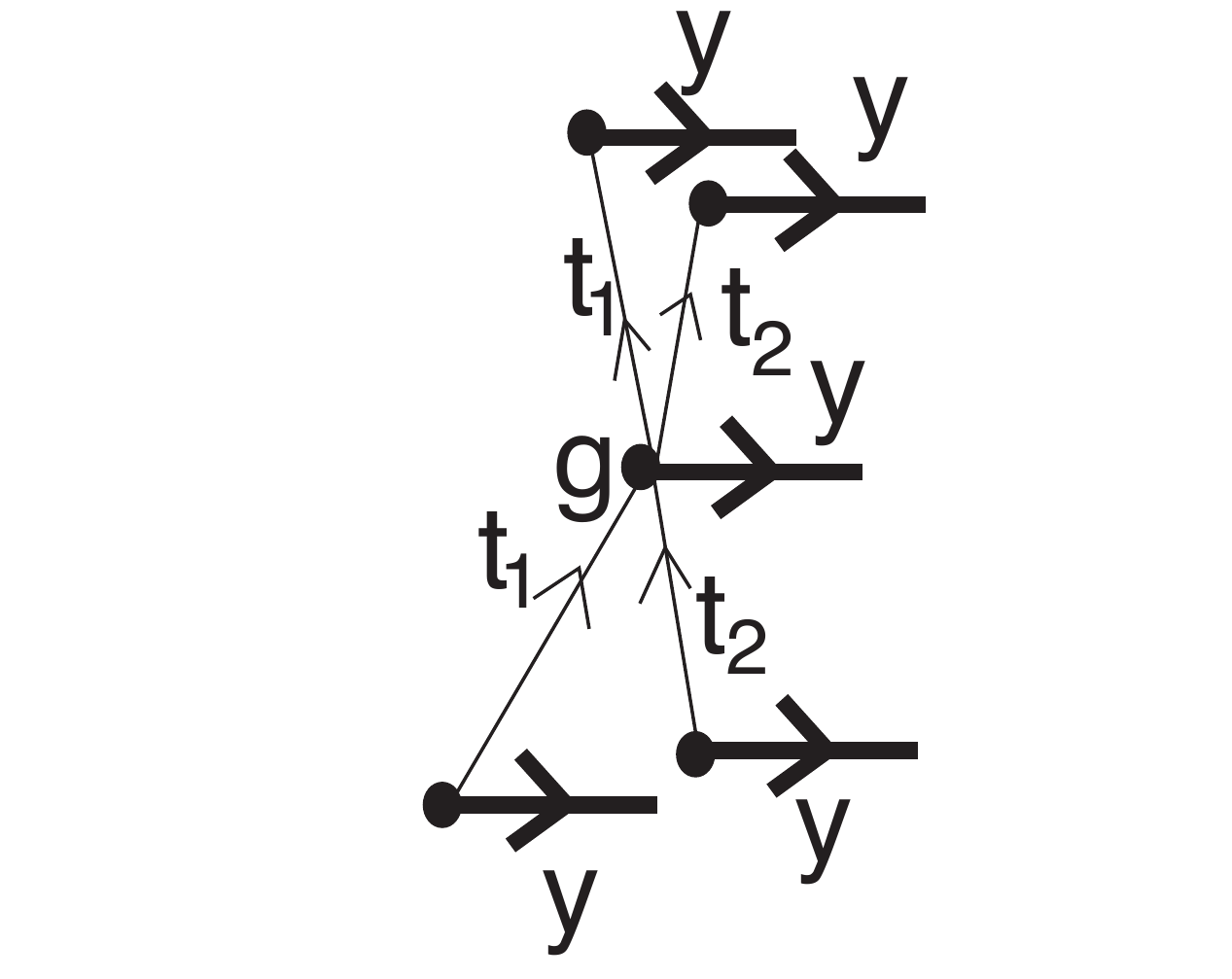}}} \caption{\label{dessin} Horizontal wall}
\end{figure}

See Figure \ref{dessin} for an illustration of the horizontal walls. By definition of a reduced representative, in the definition of ${\mathcal Y}$ (resp.~of ${\mathcal V}_j$), $w$ (resp.~$t$) does not begin with $y^{-1}$ (resp.~with $t^{-1}_j$).

\begin{lemma} \hfill
\label{stabilisateurs1}

The collection of all the horizontal walls is $G$-invariant for the left-action of $G$ on itself.  The same assertion is true for the collection of all the vertical walls. Moreover:

\begin{enumerate}
  \item The left $G$-stabilizer of any horizontal wall is a conjugate of the vertical subgroup.

  \item The horizontal subgroup is both the left and right $G$-stabilizer of any vertical wall.

\end{enumerate}
\end{lemma}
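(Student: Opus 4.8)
The plan is to reduce both stabiliser computations inside $G$ to a single elementary rigidity property of ``prefix cones'' in a free group. Two preliminary observations organise everything. First, the collection of horizontal walls is by definition the orbit of the single partition $({\mathcal Y},{\mathcal Y}^c)$ under the left action of $G$, and each family of vertical $j$-walls is the orbit of $({\mathcal V}_j,{\mathcal V}_j^c)$; an orbit is automatically invariant, so the $G$-invariance assertions are immediate (the collection of \emph{all} vertical walls being the union over $j=1,2$). Second, writing each $g\in G$ in its canonical form $g=tw$ with $t$ vertical and $w$ horizontal, I record that left multiplication by a vertical element leaves the horizontal factor $w$ unchanged, while the map $q\colon G\twoheadrightarrow \F{2}$, $q(g)=t$, is exactly the quotient homomorphism with kernel the horizontal subgroup $\F{3}$. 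I will also use repeatedly that for the left action $\mathrm{Stab}_L\bigl(g\cdot W\bigr)=g\,\mathrm{Stab}_L(W)\,g^{-1}$, so it suffices to treat the base partitions $({\mathcal Y},{\mathcal Y}^c)$ and $({\mathcal V}_j,{\mathcal V}_j^c)$.

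For a horizontal wall it is therefore enough to show $\mathrm{Stab}_L({\mathcal Y},{\mathcal Y}^c)=\F{2}$, since the stabiliser of $g({\mathcal Y},{\mathcal Y}^c)$ is then the conjugate $g\F{2}g^{-1}$ of the vertical subgroup. The inclusion $\F{2}\subseteq\mathrm{Stab}_L$ is the first preliminary fact: left multiplication by a vertical element preserves $w$, hence the defining condition ``$w$ begins with $y$''. For the reverse inclusion I take $h\in\mathrm{Stab}_L$ and write $h=t_hw_h$; since $t_h\in\F{2}\subseteq\mathrm{Stab}_L$ and the stabiliser is a subgroup, $w_h=t_h^{-1}h$ stabilises the wall too, so I may assume $h=w_h\in\F{3}$. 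Intersecting the equality $w_h{\mathcal Y}={\mathcal Y}$ (or $={\mathcal Y}^c$) with $\F{3}$ and using $w_h\in\F{3}$ gives $w_hC_y=C_y$ (or $=C_y^c$), where $C_y\subset\F{3}$ is the set of reduced words beginning with $y$; the rigidity lemma below forces $w_h=1$.

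For a vertical wall the homomorphism $q$ does the work, because ${\mathcal V}_j=q^{-1}(C_{t_j})$ with $C_{t_j}\subset\F{2}$ the reduced words beginning with $t_j$. As $q$ is a surjective homomorphism, $h{\mathcal V}_j=q^{-1}\bigl(q(h)C_{t_j}\bigr)$, so $h\in\mathrm{Stab}_L({\mathcal V}_j,{\mathcal V}_j^c)$ if and only if $q(h)C_{t_j}\in\{C_{t_j},C_{t_j}^c\}$; the rigidity lemma in $\F{2}$ gives $q(h)=1$, i.e. $h\in\ker q=\F{3}$, and conversely $\F{3}=\ker q$ preserves ${\mathcal V}_j$. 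Because $\F{3}$ is \emph{normal}, the conjugation formula shows that \emph{every} vertical wall has stabiliser exactly $\F{3}$ — in contrast to the horizontal case, where only a conjugate of the non-normal vertical subgroup appears. The identity $q(gh)=q(g)q(h)$ shows moreover that $\F{3}=\ker q$ preserves each ${\mathcal V}_j$ under right multiplication as well, which is the two-sided $\F{3}$-invariance in the statement.

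The one real step is the rigidity lemma: \emph{in a free group $F$ with free basis, if $a$ is a basis element and $C_a$ denotes the set of reduced words beginning with $a$, then $sC_a=C_a$ forces $s=1$, and $sC_a=C_a^c$ is impossible.} I would prove it in the Cayley tree $T$ of $F$: deleting the edge $e=(1,a)$ splits the vertex set into two halves, the half on the $a$-side being exactly $C_a$, and distinct edges of a tree induce distinct bipartitions. A left translation $s$ preserving the partition $\{C_a,C_a^c\}$ must send $e$ to the unique edge inducing that same partition, namely $e$ with one of its two orientations; $s\cdot e=e$ forces $s=1$, while the orientation-reversing case $s\cdot e=\bar e$ would force $a^2=1$, impossible in a free group. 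This disposes of both a nontrivial stabilising $s$ and the ``swap'' case at once, and is precisely what is invoked in $\F{3}$ and in $\F{2}$. I expect this lemma, together with the bookkeeping that feeds the $G$-statements into it (intersecting with $\F{3}$ in the horizontal case, applying $q$ in the vertical case), to be the crux; everything else is formal.
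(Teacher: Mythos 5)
Your proof is correct in everything it establishes, and it takes a genuinely different route from the paper's. The paper works directly in $G$: the vertical subgroup preserves ${\mathcal Y}$ because left multiplication by a vertical element leaves the horizontal factor untouched; then the relation $ut=t\sigma(t)(u)$ is invoked to claim that no nontrivial horizontal $u$ stabilizes ${\mathcal Y}$; and the vertical walls are dispatched with the single remark that the horizontal subgroup is normal, so taking conjugates is ``useless''. Your argument replaces the second step --- which in the paper is only a sketch, since one must actually exhibit an element of ${\mathcal Y}$ thrown out of ${\mathcal Y}$ by $u$ --- with a clean reduction: strip off the vertical part of a stabilizing element, intersect with $\F{3}$, and apply your tree rigidity lemma (a left translation preserving the bipartition $\{C_a,C_a^c\}$ must fix the edge $(1,a)$, hence be trivial, the swap case forcing $a^2=1$). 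The same lemma applied in $\F{2}$ through the quotient homomorphism $q$ settles the vertical case, with normality of $\ker q=\F{3}$ handling all translates at once. What your route buys is rigor and uniformity: one elementary lemma disposes both of nontrivial stabilizing elements and of the swap case $sC_a=C_a^c$, a case the paper never addresses at all.

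One point must be flagged, though it is a defect of the paper's statement rather than a gap in your reasoning. For the right action you prove only the containment of $\F{3}$ in the right stabilizer of a vertical wall, and you assert that this ``is the two-sided $\F{3}$-invariance in the statement''; but item (2) literally asserts equality, and equality is false. Right multiplication by $t_2$ preserves $({\mathcal V}_1,{\mathcal V}^c_1)$: writing $g=tw$ with $t$ vertical and $w$ horizontal, the vertical part of $gt_2$ is the reduced form of $tt_2$, and appending $t_2^{\pm 1}$ can only alter the last letter of $t$, never create or destroy an initial letter $t_1$; equivalently $C_{t_1}t_2=C_{t_1}$ in $\F{2}$. (Your rigidity lemma does not apply here, because right multiplication is not an automorphism of the Cayley tree.) Hence the right stabilizer of the vertical $1$-wall contains $q^{-1}(\langle t_2\rangle)$, which strictly contains $\F{3}$. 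The paper's own proof (``similar and easier'') is equally silent about the right action, and the only fact used later --- in Proposition~\ref{vertical}, that no vertical wall separates $g$ from $gs$ for $s$ horizontal --- is exactly the containment you proved. So your hedge is the materially correct statement; you should simply say outright that item (2) as written cannot be proved, rather than suggesting your containment discharges it.
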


\begin{proof}
By definition the collection of either all the horizontal or all the vertical walls consists of all the left $G$-translates of the horizontal or vertical walls $({\mathcal Y},{\mathcal Y}^c)$ or $({\mathcal V}_j,{\mathcal V}^c_j)$ so that it is invariant under the left $G$-action.

Let $g \in {\mathcal Y}$. Then $g=ty w$ for some $t$ in the vertical subgroup and $w$ in the horizontal one. If $t^\prime$ is another element in the vertical subgroup, $t^\prime g = t^\prime t y w \in {\mathcal X}_i$. Thus the vertical subgroup is in the $G$-stabilizer of ${\mathcal Y}$. By the relation $ut = t \sigma(t)(u)$ for $u$ in the horizontal subgroup (recall that $\sigma \colon \F{2} \hookrightarrow \Aut{\F{3}}$ is the monomorphism such that $G = \F{3} \rtimes_\sigma \F{2}$) we get $ug=t\sigma(t)(u) y w$ so that $\langle u \rangle$ does not stabilize ${\mathcal Y}$. Since any element of $G$ is the concatenation of a vertical element with a horizontal one, these observations imply that the $G$-stabilizer of ${\mathcal Y}$ is the vertical subgroup. Since the horizontal walls are left $G$-translates of the wall $({\mathcal Y},{\mathcal Y}^c)$, the $G$-stabilizer of a horizontal wall is a conjugate of the vertical subgroup.

The proof for the stabilizers of the vertical walls are similar and easier: just observe that since the horizontal subgroup is normal in $G$, it is useless to take its conjugates.
\end{proof}

\subsection{Finiteness of horizontal and vertical walls between any two elements}

\begin{proposition} \hfill
\label{vertical}

There are a finite number of vertical walls between any two elements.
\end{proposition}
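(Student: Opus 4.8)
The plan is to exploit the computation of stabilizers from Lemma \ref{stabilisateurs1}, namely that the horizontal subgroup $\F{3}$ is the left stabilizer of every vertical wall. First I would use left-invariance to normalize the problem: since the collection of vertical walls is $G$-invariant and left translation preserves the relation ``$W$ separates $a$ from $b$'', the number of vertical walls separating two given elements $a,b$ equals the number separating $e$ from $h := a^{-1}b$. Writing $h = t_h w_h$ in its unique vertical-times-horizontal decomposition, I may then work with the single fixed vertical element $t_h$.

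Next I would parametrize the vertical $j$-walls. Because $\F{3}$ stabilizes $({\mathcal V}_j,{\mathcal V}^c_j)$, two translates $g({\mathcal V}_j,{\mathcal V}^c_j)$ and $g'({\mathcal V}_j,{\mathcal V}^c_j)$ coincide exactly when $g^{-1}g' \in \F{3}$; as each left coset $g\F{3}$ contains a unique vertical element (its vertical part), the vertical $j$-walls are in bijection with the vertical elements $t \in \F{2}$. Moreover, directly from the definition of ${\mathcal V}_j$ together with the uniqueness of the normal form, an element lies in ${\mathcal V}_j$ if and only if the vertical part of its normal form, read as a reduced word in $\F{2}$, begins with the letter $t_j$. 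The one point needing care here is that cancellation between vertical and horizontal letters cannot disturb this reading; this is exactly what the uniqueness of the semidirect-product decomposition guarantees.

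Combining these observations translates the separation condition into a purely combinatorial statement inside $\F{2}$: the wall $t({\mathcal V}_j,{\mathcal V}^c_j)$ separates $e$ from $h$ precisely when exactly one of $t^{-1}$ and $t^{-1}t_h$ (reduced in $\F{2}$) begins with $t_j$. Setting $u = t^{-1}$, this says that $u$ and $ut_h$ must differ as to whether they begin with $t_j$. The crux of the argument, and the only genuinely substantive step, is then the elementary observation that right multiplication by the fixed element $t_h$ alters only a suffix of bounded length: if the reduced length of $u$ exceeds that of $t_h$, at most $|t_h|$ letters cancel at the junction, so the first letter of $u$ survives in $ut_h$ and the two words begin with the same letter. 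Hence every vertical wall separating $e$ from $h$ corresponds to a pair $(u,j)$ with $u \in \F{2}$ of reduced length at most $|t_h|$ and $j \in \{1,2\}$. Since $\F{2}$ has only finitely many elements of bounded length and distinct separating walls yield distinct pairs $(u,j)$, there are only finitely many such walls, which is the assertion.
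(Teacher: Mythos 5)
Your proof is correct, and it rests on the same two inputs as the paper's argument --- Lemma \ref{stabilisateurs1} and the identification of vertical walls with the classical walls of the free group $\F{2}$ --- but the execution is genuinely different. The paper argues along a generator path: exactly one vertical wall is crossed when passing from $g$ to $gt_i$ (the classical free-group fact, cited rather than proved), and none when passing from $g$ to $gs$ with $s$ a horizontal generator (right-invariance under the horizontal subgroup); finiteness follows by summing over the letters of any word representing $a^{-1}b$. You instead count the walls globally: you normalize by left translation, parametrize the vertical $j$-walls by vertical elements $t \in \F{2}$ (using that $\F{3}$ is the left stabilizer and that each coset $g\F{3}$ contains a unique vertical element), use right-invariance to replace $a^{-1}b$ by its vertical part $t_h$, and then re-prove the free-group finiteness from scratch via the bounded-cancellation observation that right multiplication of a reduced word $u$ with $|u| > |t_h|$ by $t_h$ cannot change its first letter. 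What your version buys is self-containedness and an explicit estimate --- at most $2\,\Card\{u \in \F{2} \; ; \; |u| \leq |t_h|\}$ separating vertical walls --- whereas the paper's version is shorter, leaning on the standard picture of walls pulled back from the quotient $\F{2}$. One point worth making explicit in your write-up: both your parametrization of walls and your reduction from $h = t_h w_h$ to $t_h$ use that elements of $\F{3}$ preserve each side ${\mathcal V}_j$, ${\mathcal V}^c_j$ individually, not merely the unordered pair; this does hold, since left or right multiplication by a horizontal element leaves the vertical part of the normal form unchanged, but it deserves a sentence.
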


\begin{proof}
The vertical walls are the usual walls used to prove that the free group $\F{2}$ satisfies the Haagerup property. Thus there are a finite number (in fact one) of vertical walls between $g \in G$ and $g t_i$ with $t_i$ a vertical generator. By Lemma \ref{stabilisateurs1} each vertical wall is stabilized by the right-action of the horizontal subgroup. Thus no vertical wall separates $g$ from $gs$, $g \in G$ and $s$ a horizontal generator $x_i$ or $y$. The proposition follows.
\end{proof}

\begin{proposition} \hfill
\label{horizontal}

There are a finite number of horizontal walls between any two elements in $G$.
\end{proposition}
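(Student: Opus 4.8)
The plan is to mirror the proof of Proposition \ref{vertical}: reduce, by left $G$-invariance of the collection of horizontal walls, to a count along the edges of a single edge-path. I will use the elementary fact that if a wall separates the two endpoints of an edge-path then it separates the endpoints of at least one of its edges. Concretely, fixing a reduced representative of $g^{-1}h$ and letting $g=g_0,g_1,\dots,g_m=h$ be the edge-path it reads, every horizontal wall separating $g$ and $h$ separates some consecutive pair $(g_i,g_{i+1})$; hence the number of horizontal walls separating $g$ and $h$ is at most $\sum_i N(s_i)$, where $N(s)$ is the number of horizontal walls separating $g_i$ and $g_i s_i$. By left $G$-invariance $N(s)$ depends only on the generator $s \in S \cup S^{-1}$, so it suffices to count the horizontal walls separating $e$ and $s$ for each generator $s$.

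To carry out this count I would first invoke Lemma \ref{stabilisateurs1}(1): since the left $G$-stabilizer of $(\mathcal{Y},\mathcal{Y}^c)$ is exactly the vertical subgroup $\F{2}$, the horizontal walls are in bijection with the left cosets $g'\F{2}$, and $\F{3}$ is a transversal for these cosets (each coset meets the horizontal subgroup in exactly one element). Thus every horizontal wall is $v^{-1}(\mathcal{Y},\mathcal{Y}^c)$ for a unique $v\in\F{3}$, and this wall separates $e$ and $s$ if and only if exactly one of $v$ and $vs$ lies in $\mathcal{Y}$. Since for an element of $\F{3}$ written in reduced form, membership in $\mathcal{Y}$ is simply the condition that its first letter is $y$, the problem becomes: for how many $v\in\F{3}$ does right multiplication by $s$ flip the property ``the reduced horizontal normal form begins with $y$''?

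I would then treat the three types of generator. For $s=x_i^{\pm1}$, right multiplication affects only the last letter of $v$, so it never changes the first letter except in the degenerate cases $v\in\{e,x_i^{\mp1}\}$, which one checks directly do not flip membership in $\mathcal{Y}$; hence $N(x_i^{\pm1})=0$. For $s=y^{\pm1}$ the same first-letter analysis shows the property is flipped for exactly one value of $v$ (namely $v=e$ when $s=y$, and $v=y$ when $s=y^{-1}$), so $N(y^{\pm1})=1$, the separating wall being $(\mathcal{Y},\mathcal{Y}^c)$, respectively $y^{-1}(\mathcal{Y},\mathcal{Y}^c)$. For $s=t_j^{\pm1}$ the normal form of $vt_j^{\pm1}$ has horizontal part $\sigma(t_j)^{\pm1}(v)$, and the crucial point is that $\sigma(t_j)$, which fixes $x_1,x_2$ and sends $y\mapsto yx_j$, preserves the property ``begins with $y$'': if $v=yv'$ then $\sigma(t_j)(v)=yx_j\,\sigma(t_j)(v')$ still begins with $y$, whereas if $v$ begins with $y^{-1},x_1^{\pm1}$ or $x_2^{\pm1}$ then so does $\sigma(t_j)(v)$ (in particular $\sigma(t_j)(y^{-1})=x_j^{-1}y^{-1}$ begins with $x_j^{-1}$). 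Hence $N(t_j^{\pm1})=0$. Combining these, the number of horizontal walls separating $g$ and $h$ is bounded by the number of $y^{\pm1}$-letters in a reduced representative of $g^{-1}h$, which is finite.

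The hard part is the vertical-generator case. Everything hinges on showing that the twisting automorphisms $\sigma(t_j)$ do not disturb the ``first horizontal letter is $y$'' property, and the only genuine work lies in verifying that the cancellations produced when $\sigma(t_j)$ is applied to a reduced word neither expose nor destroy a leading $y$: one must check that the leading $y$ of $\sigma(t_j)(yv')$ survives reduction (it cannot cancel, since what follows it reduces to a word beginning with a power of $x_j$, never with $y^{-1}$), and dually that $\sigma(t_j)$ of a word not beginning with $y$ cannot create a leading $y$. This transparency of the vertical direction to the horizontal walls is precisely the twisted analogue of the transparency of horizontal moves to the vertical walls exploited in Proposition \ref{vertical}.
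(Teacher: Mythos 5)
Your proposal is correct, and while it rests on the same hard fact as the paper, it organizes the counting differently. The shared core is the step you rightly identify as the only genuine work: the automorphisms $\sigma(t_j)$ preserve the property ``the reduced horizontal word begins with $y$'', so $t_j^{\pm1}$-edges never cross horizontal walls. This is precisely the content of the paper's Lemma \ref{facile}, whose internal Claim carries out the cancellation analysis you only sketch: the paper writes $w = ym_0y^{\epsilon_1}m_1\cdots y^{\epsilon_k}m_k$ and checks that $\sigma(t_j)$ only modifies the $x$-blocks $m_i$, and that a block can vanish only when $\epsilon_i+\epsilon_{i+1}\neq 0$, so no $y$-cancellation is ever created; your parenthetical justification (``what follows the leading $y$ reduces to a word beginning with a power of $x_j$'') should be expanded to that level of care, since a leading run of $x_j^{\pm1}$-letters can collapse entirely and the argument is really an induction along the word. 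Where you genuinely diverge is the treatment of the remaining generators. The paper passes to the smaller generating set $S_{\mathrm{min}}=\{y,t_1,t_2\}$ of Lemma \ref{yves}, so it never discusses $x_i$-edges at all, and for $y$-edges it invokes the graph-theoretic Claim \ref{portes ouvertes} (the edges $(gt,gty)^{\pm1}$, $t\in\F{2}$, disconnect $\Gamma_c$ into the two sides of $g(\mathcal{Y},\mathcal{Y}^c)$) --- a claim stated there without proof --- to get ``exactly one wall per $y$-edge''. You instead parametrize the horizontal walls by elements of $\F{3}$ via the stabilizer computation of Lemma \ref{stabilisateurs1}(1) and reduce every separation question to the first-letter criterion for membership in $\mathcal{Y}$, obtaining the explicit counts $N(y^{\pm1})=1$ and $N(x_i^{\pm1})=N(t_j^{\pm1})=0$ over the full generating set $S$. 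Your route is more elementary and self-contained: it replaces the unproven separation claim by a coset count, and even the injectivity of your parametrization is not really needed (surjectivity, which follows from $\F{2}$ stabilizing each side, already gives the upper bound). What the paper's route buys is reusability: Claim \ref{portes ouvertes} is invoked again later (in Claim \ref{jpo} and in Lemma \ref{pour le rapporteur}), so its cost is amortized over the rest of the argument.
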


This proposition is a little bit more difficult than the previous one and we need a preliminary lemma:

\begin{lemma} \hfill
\label{facile}

Each side of each horizontal wall is invariant under the right-action of the vertical subgroup: if $(\mathcal H,{\mathcal H}^c)$ is an horizontal
wall then for any element $t$ of the vertical subgroup we have ${\mathcal H} t = \mathcal H$ and ${\mathcal H}^c t = {\mathcal H}^c$.
In particular, each horizontal wall is invariant under the right-action of the vertical subgroup.
\end{lemma}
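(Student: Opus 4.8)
The plan is to reduce the statement to the ``base'' block $\mathcal Y$ and then to a purely combinatorial fact about the free group $\F{3}=\langle x_1,x_2,y\rangle$. First I would observe that by Definition \ref{ensemble elementaire} every horizontal wall has the form $(\mathcal H,\mathcal H^c)=g(\mathcal Y,\mathcal Y^c)$ for some $g\in G$, so that $\mathcal H t=g\,\mathcal Y t$ and $\mathcal H^c t=g\,\mathcal Y^c t$. Since left and right multiplications commute and $r\mapsto rt$ is a bijection of $G$, it suffices to prove that $\mathcal Y t=\mathcal Y$ for every vertical $t$; the equality $\mathcal Y^c t=\mathcal Y^c$ then follows by passing to complements. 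The set of vertical $t$ with $\mathcal Y t=\mathcal Y$ is clearly a subgroup, so I only need to treat $t=t_j$ for $j=1,2$.

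Next I would translate membership in $\mathcal Y$ into a condition on the horizontal part of an element. Writing $g=tw$ in the unique normal form (vertical $t$, horizontal $w$), Definition \ref{ensemble elementaire} says that $g\in\mathcal Y$ if and only if the reduced horizontal word $w$ begins with the letter $y$. Using the relation $wt=t\,\sigma(t)(w)$ from the proof of Lemma \ref{stabilisateurs1}, the horizontal part of $gt$ is exactly $\sigma(t)(w)$. Hence $\mathcal Y t_j=\mathcal Y$ is equivalent to the assertion that, for $\phi=\sigma(t_j)^{\pm1}$, the automorphism $\phi$ maps the set of reduced horizontal words beginning with $y$ into itself (applying this both to $\phi$ and to $\phi^{-1}$ yields the two inclusions). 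The relevant feature of these particular automorphisms is that $\phi$ fixes $x_1$ and $x_2$ and sends $y\mapsto yu$ with $u=x_j^{\pm1}\in\langle x_1,x_2\rangle$; crucially, $u$ contains no occurrence of $y$.

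The heart of the proof is therefore the claim: if a reduced word $w$ of $\F{3}$ begins with $y$, then the reduced form of $\phi(w)$ again begins with $y$. I would prove this by induction on the length of $w$, working in the free-product decomposition $\F{3}=\langle x_1,x_2\rangle\ast\langle y\rangle$ so that syllables are easy to track. Writing $w=yv$ with $v$ not beginning with $y^{-1}$, one has $\phi(w)=y\cdot\bigl(u\,\phi(v)\bigr)$, so it is enough to show that $u\,\phi(v)$ does not reduce to a word beginning with $y^{-1}$; I would carry this auxiliary statement, with an arbitrary prefix $u'\in\langle x_1,x_2\rangle$ in place of $u$, through the induction simultaneously. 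The main obstacle — and the only genuinely delicate point — is to control the cancellations created by $\phi$: since $\phi(y^{-1})=u^{-1}y^{-1}$, applying $\phi$ can manufacture new $y^{-1}$'s, and one must rule out that such a letter migrates to the front and swallows the leading $y$. The observation that defeats this is precisely that $u$ lies in $\langle x_1,x_2\rangle$ and involves no $y$: every $y^{-1}$ produced by $\phi$ is immediately preceded by a nontrivial syllable of $\langle x_1,x_2\rangle$ (the conjugate $u'a_0u^{-1}$ arising when a leading $\langle x_1,x_2\rangle$-syllable $a_0\neq1$ meets $\phi(y^{-1})$), and such a syllable can never cancel away to expose the $y^{-1}$. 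Feeding this back through the induction, the leading $y$ of $w$ is never destroyed, which establishes the claim and hence Lemma \ref{facile}.
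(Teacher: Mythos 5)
Your opening reductions are sound and are in fact the same as the paper's: the paper also reduces the lemma to the statement that the set $Y$ of reduced horizontal words beginning with $y$ is preserved by $\sigma(t)^{\pm 1}$ for $t$ vertical (conjugation by $t$ on the horizontal subgroup \emph{is} $\sigma(t)$). The gap is in what you call the heart of the proof. The auxiliary statement you propose to carry through the induction --- for an \emph{arbitrary} prefix $u'\in\langle x_1,x_2\rangle$ and any reduced $v$ not beginning with $y^{-1}$, the reduced form of $u'\phi(v)$ does not begin with $y^{-1}$ --- is false. Take $\phi=\sigma(t_1)$ (so $u=x_1$), $u'=1$ and $v=x_1y^{-1}$: then $u'\phi(v)=x_1\cdot x_1^{-1}y^{-1}=y^{-1}$. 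More generally, for any prefix $u'\neq u$ the word $v=(u')^{-1}u\,y^{-1}$ is a counterexample. The observation you invoke to rule this out is precisely where the error sits: $u'a_0u^{-1}$ is a conjugate of $a_0$, hence nontrivial whenever $a_0\neq 1$, \emph{only} when $u'=u$; for any other prefix it is just a product, and it vanishes exactly when $a_0=(u')^{-1}u$. So the strengthening that was supposed to make the induction close up is itself the false step, and it fails exactly at the point you identified as delicate (a manufactured $y^{-1}$ migrating to the front).

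A correct induction has to keep the prefix tied to the hypothesis. What is true is the statement with prefix equal to $u$ itself: if $v$ does not begin with $y^{-1}$, then $u\phi(v)$ does not reduce to a word beginning with $y^{-1}$. But then the inductive step for $v=a_0y^{-1}v''$ (here $a_0\neq1$ automatically), where $u\phi(v)=(ua_0u^{-1})\,y^{-1}\phi(v'')$ and $ua_0u^{-1}\neq 1$ is now a genuine conjugate, needs a \emph{second}, dual statement --- that $\phi(v'')$ does not reduce to a word beginning with $y^{+1}$ when $v''$ does not begin with $y^{+1}$ --- for otherwise you cannot exclude that $y^{-1}\phi(v'')$ collapses from the left and swallows the syllable $ua_0u^{-1}$; your outline asserts this cannot happen but provides no mechanism for it. The paper avoids all of this bookkeeping with one global computation: writing $w=ym_0y^{\epsilon_1}m_1\cdots y^{\epsilon_k}m_k$ in syllable form, it checks that $\sigma(t_1)^{\pm1}(w)=y\mu_0y^{\epsilon_1}\mu_1\cdots y^{\epsilon_k}\mu_k$, where each $\mu_i$ is $m_i$ multiplied on one or both sides by $x_1^{\pm1}$, and that $\mu_i$ can only be trivial when $\epsilon_i$ and $\epsilon_{i+1}$ have the same sign (when the signs are opposite, $m_i\neq 1$ and $\mu_i$ is an honest conjugate of it); hence no $y$-letter ever cancels and the leading $y$ survives. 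Either adopt that one-shot computation, or repair your induction by replacing the single false auxiliary statement with the two correctly paired ones above.
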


\begin{proof} We begin with the

\begin{claim}
Let $Y$ be the intersection of ${\mathcal Y}$ with the horizontal subgroup $\F{3}$. Then $Y$ is invariant under the conjugation-action of the vertical subgroup $\F{2}$.
\end{claim}

\begin{proof}
 Let $w \in Y$. We write the reduced word $w=ym_0 y^{\epsilon_1} m_1 \cdots y^{\epsilon_i} m_i \cdots y^{\epsilon_k}m_k$, $k \geq 0, \epsilon_i \in \{\pm 1\}$, with $m_i$ a reduced group word in the letters $x^{\pm 1}_1,x^{\pm 1}_2$ (since $w$ is reduced for each $i$ satisfying $\epsilon_i+\epsilon_{i+1}=0$ we have $m_i\neq 1$).
Then $t^{-1}_1 w t_1 = y \mu_0 y^{\epsilon_1} \mu_1 \cdots y^{\epsilon_i} \mu_i \cdots y^{\epsilon_k} \mu_k$, where $\mu_i$ has the form $a_i m_i b^{-1}_i$ with

\begin{itemize}
    \item $a_i=x_1$ if $\epsilon_i=1$ and $a_i=1$ otherwise,
    \item $b_i=x^{-1}_1$ if $\epsilon_{i+1}=-1$ and $b_i=1$ otherwise,
\end{itemize}

for $i=1,\cdots,k$ and setting $\epsilon_{k+1}=1$.

Therefore, if $i<k$ and $\epsilon_i+\epsilon_{i+1}=0$ then $\mu_i$ is conjugate to $m_i$ so that it is non trivial. Whenever $\epsilon_i + \epsilon_{i+1} \neq 0$ no cancellation might occur after reduction between $y^{\epsilon_i}$ and $y^{\epsilon_{i+1}}$ even if $\mu_i$ is reduced to the trivial word. Thus, after writing the $\mu_i$ as reduced words, the word $y\mu_0y^{\epsilon_1}\mu_1\dots y^{\epsilon_k}\mu_k$ we eventually get is reduced, so that $t^{-1}_1wt_1 Y \subset Y$. The reverse inclusion being analogous we have $t^{-1}_1wt_1 Y = Y$ and similarly $t^{-1}_2 w t_2 Y = Y$.
\end{proof}

Now $\displaystyle {\mathcal Y} =\bigcup_{t \in \F{2}} tY=\bigcup_{t \in \F{2}} (tYt^{-1})t$ which by the claim is equal to $\displaystyle \bigcup_{t \in \F{2}} Y t$. Therefore $u {\mathcal Y} t = u {\mathcal Y}$ for any $t$ in the vertical subgroup and for any $u$ in the horizontal one. Each one of the previous equalities holds when substituting ${\mathcal Y}^c$ for $\mathcal Y$ so that in particular $u({\mathcal Y},{\mathcal Y}^c)t = u ({\mathcal Y},{\mathcal Y}^c)$ for any $t$ in the vertical subgroup and for any $u$ in the horizontal one. Lemma \ref{facile} is proved.
\end{proof}

\begin{proof}[Proof of Proposition \ref{horizontal}]
 By Lemma \ref{facile}, there is no horizontal wall between any two elements $g$ and $gt_j$, $j=1,2$. On the other hand we have the following
 
 \begin{claim}
\label{portes ouvertes}
For any $g \in G$, $\displaystyle \bigcup_{t \in \F{2}} (gt,gty)^{\pm 1}$
disconnects $\Gamma_c$ in two connected components, which are the two sides of the horizontal wall $g(\mathcal Y,{\mathcal Y}^c)$. 
\end{claim}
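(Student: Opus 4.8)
The plan is to pass immediately to the translate at the identity. Left multiplication by $g$ is a label-preserving automorphism of $\Gamma_c$ carrying $({\mathcal Y},{\mathcal Y}^c)$ to $g({\mathcal Y},{\mathcal Y}^c)$ and the family $\bigcup_{t}(t,ty)^{\pm1}$ to $\bigcup_{t}(gt,gty)^{\pm1}$, so it suffices to prove the statement for $g=1$. Throughout I write each element uniquely as $tw'$ with $t\in\F{2}$ vertical and $w'\in\F{3}$ horizontal; by definition $tw'\in{\mathcal Y}$ if and only if the reduced word $w'$ begins with $y$.

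The first step is to determine which edges of $\Gamma_c$ have their two endpoints on opposite sides of the wall. The edges of $\Gamma_c$ carry labels in $\{y,t_1,t_2\}^{\pm1}$. For a $t_j^{\pm1}$-edge this is exactly Lemma \ref{facile}: right translation by a vertical element preserves each side, so such an edge never crosses. For a $y$-edge $(h,hy)$ I compare the horizontal parts $w'$ of $h$ and $w'y$ of $hy$; appending $y$ on the right never destroys a leading $y$ and creates one only when $w'$ is empty, so $h$ and $hy$ lie on opposite sides precisely when $w'=1$, i.e. when $h=t\in\F{2}$ and $hy=ty$. Hence the crossing $1$-cells are exactly the $\{t,ty\}$, $t\in\F{2}$, which is the family in the statement; once they are removed no remaining edge of $\Gamma_c$ joins ${\mathcal Y}$ to ${\mathcal Y}^c$, and the two sides are therefore unions of connected components of the resulting graph.

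It then remains to show that each side is itself connected by non-crossing edges, which is the real content. I would organise ${\mathcal Y}$ into the slices $tY$, $t\in\F{2}$, where $Y={\mathcal Y}\cap\F{3}$ is the subtree of the horizontal tree $\F{3}$ formed by the reduced words beginning with $y$. Inside a fixed slice the only directly available $\Gamma_c$-edges are the $y$-edges; an $x_i$-edge $(tv,tvx_i)$ of the tree is simulated by the $\Gamma_c$-path spelling $x_i=y^{-1}t_i^{-1}yt_i$, and a short check shows this path remains in ${\mathcal Y}$ as long as $v\neq y$, the obstruction being that the initial step $tv\mapsto tvy^{-1}=t$ lands in $\F{2}\subset{\mathcal Y}^c$ exactly when $v=y$. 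Thus deleting the wall edges genuinely breaks each slice into five pieces: the piece containing $ty$ together with the four pieces of words beginning $yx_i^{\pm1}$.

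The final step is to reglue these pieces using the vertical edges, and this is where the relation $t_i^{-1}yt_i=yx_i$ does the work: the $t_j$-edge out of $ty$ lands at $ty\,t_j=tt_j\,yx_j$, inside the ``$yx_j$-piece'' of the neighbouring slice $tt_jY$, and chasing a short chain of vertical edges (using that there are two generators $t_1,t_2$) connects $ty$ to $tt_jy$; since $\F{2}$ is generated by $t_1,t_2$ this links all the pieces containing a vertex $sy$, and the remaining pieces attach to them, so ${\mathcal Y}$ is connected. The argument for ${\mathcal Y}^c$ is the same but easier: its slices are rooted at the identity, the simulating path for an $x_i$-edge now has first step $tv\mapsto tvy^{-1}$ staying in ${\mathcal Y}^c$ (as $v$ does not begin with $y$), so no slice breaks and the vertical edges finish the job. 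I expect this regluing to be the main obstacle, precisely because removing the wall edges really does disconnect the individual slices, and connectivity of each side is recovered only by detouring through the vertical direction.
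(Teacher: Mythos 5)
The paper itself never proves Claim \ref{portes ouvertes}: it is stated inside the proof of Proposition \ref{horizontal} and simply used, so your argument fills an omission rather than paralleling an existing proof. Your overall plan is sound and does yield the claim: the reduction to $g=1$; the determination of the crossing edges (no $t_j^{\pm 1}$-edge crosses, by Lemma \ref{facile}, and your normal-form computation correctly shows that a $y$-edge $(h,hy)$ crosses exactly when the horizontal part of $h$ is trivial); and the strategy of proving each side connected by simulating the missing $x_i$-edges with paths reading $y^{-1}t_i^{-1}yt_i$ and regluing through vertical edges. The ${\mathcal Y}^c$ half is correct as written.

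Two details in the ${\mathcal Y}$ half need repair. First, a slice $tY$ breaks into three pieces, not five: the simulation is direction-dependent, and while the path reading $y^{-1}t_i^{-1}yt_i$ started at $ty$ does leave ${\mathcal Y}$, the same path started at $tyx_i^{-1}$ stays in ${\mathcal Y}$ --- its vertices are $tyx_i^{-1}$, $tyx_i^{-1}y^{-1}$, $tt_i^{-1}yx_i^{-1}y^{-1}$, $tt_i^{-1}yx_i^{-1}$, $ty$, whose horizontal parts all begin with $y$. So the words beginning with $yx_i^{-1}$ remain attached to $ty$, and only the two tree-edges $\{ty,tyx_i\}$, $i=1,2$, are genuinely obstructed. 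Second, and more seriously, your pivotal regluing assertion --- that a chain of \emph{vertical} edges connects $ty$ to $tt_jy$ --- is literally impossible: right multiplication by vertical elements keeps you inside the coset $ty\F{2}$, whereas $tt_jy = ty\,t_jx_j^{-1} \notin ty\F{2}$ (using $y^{-1}t_jy = t_jx_j^{-1}$). Since this is the crux of the connectivity of ${\mathcal Y}$, the chain must be exhibited; it can be, and with only one generator: concatenate the vertical edge $tt_jy \rightarrow tt_jyt_j^{-1} = tyx_j^{-1}$ (from $yt_j^{-1}=t_j^{-1}yx_j^{-1}$) with the unobstructed simulated $x_j$-path from $tyx_j^{-1}$ to $ty$ displayed above (with $i=j$). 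Granting that chain, all vertices $sy$, $s \in \F{2}$, lie in one component of the complement; each obstructed piece attaches to this skeleton by a single vertical edge (e.g.\ $sy\,t_j = st_j\,yx_j$ attaches the $yx_j$-piece of the slice $st_jY$); and any other $tw \in {\mathcal Y}$ descends to the skeleton by peeling off the last letter of $w$ ($y^{\pm 1}$-edges, or simulations, which are unobstructed once $w$ has length at least $3$; the remaining case $w=yx_i$ is handled by the vertical edge $tyx_it_i^{-1} = tt_i^{-1}y$). So your statement and strategy are correct, but as written the key regluing step is asserted rather than proved, and the piece-count on which you base it is wrong.
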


Thus there is exactly one horizontal wall between any two elements $g$ and $gy$. By Lemma \ref{yves} we get the finiteness of the number of horizontal walls between any two elements in $G$.
\end{proof}

\section{Vertizontal walls}

Before beginning, we recall that $\Gamma_c$ denotes the Cayley graph of $G$ with respect to $S_{\mathrm{min}} = \{y,t_1,t_2\}$ (see Lemma \ref{yves}). We also recall that in a Cayley graph, $(g,gt_i)$ denotes the edge with label $t_i$ (a $t_i$-edge) oriented from $g$ to $gt_i$ and $(gt_i,g)$ the same edge with the opposite orientation (its label is thus $t^{-1}_i$, it is a $t^{-1}_i$-edge). Finally, if $E$ is an oriented edge then $E^{-1}$ denotes the same edge with the opposite orientation. In particular ${(g,gt_i)}^{-1} = (gt_i,g)$.

\subsection{Definition and stabilizers}

\begin{definition}
\label{definition enfin} \hfill

With the notations above: let $H_i := \langle x_{i+1},t_{i+1},y x_i y^{-1} t_i, x_i t^{-1}_i \rangle$ ($i=1,2 \mbox{ mod } 2$), let $E^+_i :=  H_i (e,t_i)$, $E^-_i := H_i (t_i,e)$ and $E_i := E^+_i \cup E^-_i$.

The {\em $i$-block} ${\mathcal T}_i$ is the set of all the elements in $G$ which are connected to the identity vertex $e$ by an edge-path in $\Gamma_c \setminus E_i$.
\end{definition}

See Figures \ref{elementaires} and \ref{tore}. Beware that Figure \ref{elementaires} might be slightly misleading when considering the edge-paths $y x_i y^{-1}$: they are indeed preserved under the right-action of $t_i$ but this is a consequence of the fact that a cancellation occurs between $\sigma(t_i)(x_i)=x_i$ and $\sigma(t_i)(y^{-1}) = x^{-1}_i y^{-1}$. Since we did not draw the images of the edges in these edge-paths, this cancellation does not appear in the figure.

\begin{figure}[htbp]
{\centerline{\includegraphics[height=13cm, viewport = 30 105 598 810,clip]{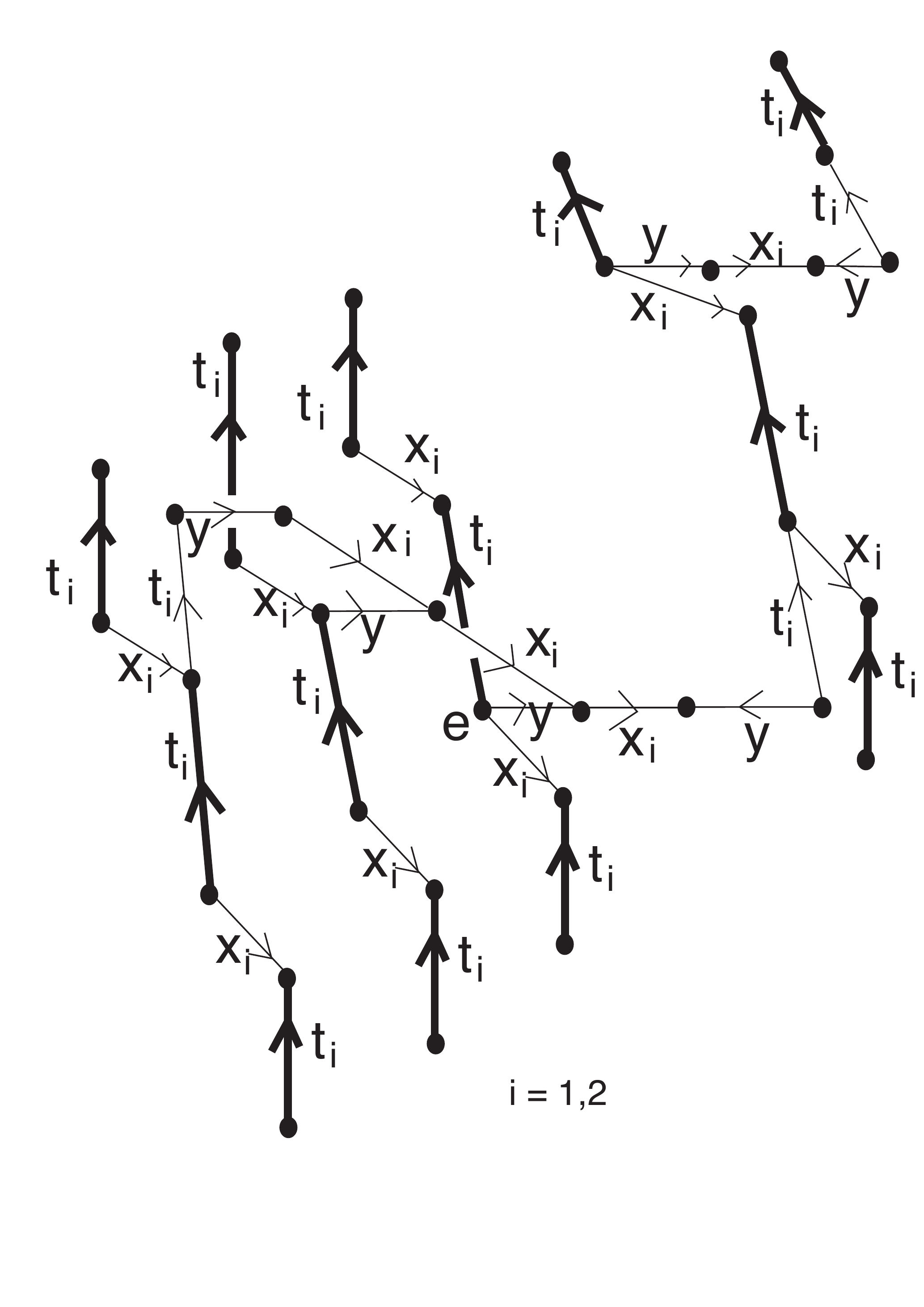}}} \caption{\label{elementaires} Some edges in $E_i$}
\end{figure}

\begin{figure}[htbp]
{\centerline{\includegraphics[height=7cm, viewport = 30 470 598 830,clip]{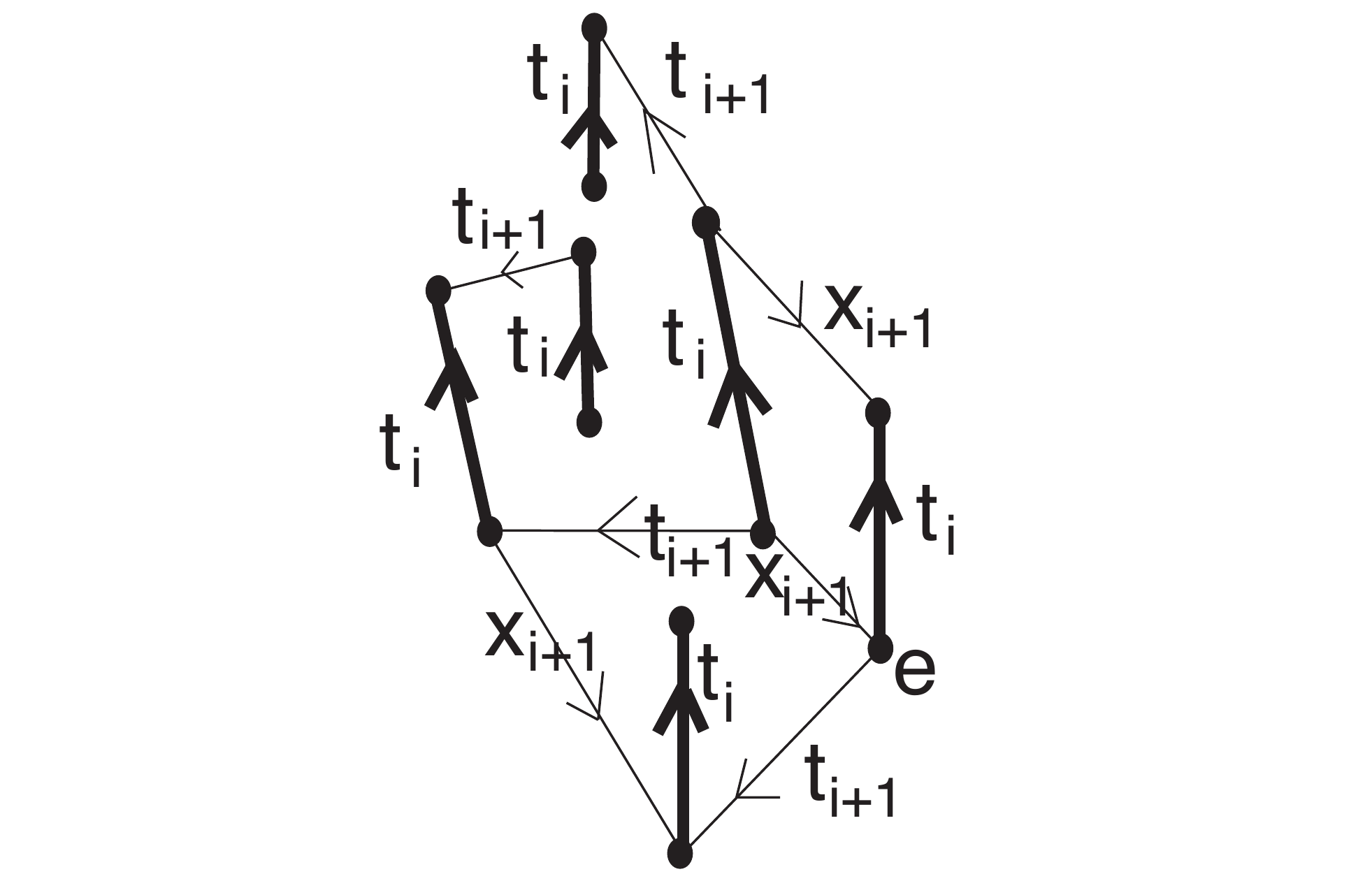}}} \caption{\label{tore} Other edges in $E_i$}
\end{figure}

\begin{remark} \hfill
\label{une remarque}

Observe that $x_{i+1} = y^{-1} t^{-1}_{i+1} y t_{i+1}$, $y x_i y^{-1} t_i = y t_i y^{-1}$ and $x_i t^{-1}_i = y^{-1} t^{-1}_i y$, i.e.~$H_i = \langle y^{-1} t^{-1}_{i+1} y t_{i+1},t_{i+1},y t_i y^{-1},y^{-1} t_i y \rangle$. In particular, for any $k \in \mz$, $y t^k_i y^{-1}$ and $y^{-1} t^k_i y$ are in $H_i$. See Figure \ref{je rajoute}.
\end{remark}

\begin{lemma}
\label{enfin 0} \hfill

All the initial (resp.~terminal) vertices of the edges in $E^+_i$ are connected to $e$ (resp.~to $t_i$) by an edge-path in $\Gamma_c \setminus E_i$.
\end{lemma}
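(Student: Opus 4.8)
The plan is to reduce everything to one elementary right-multiplication move, using a single homomorphism to control which $t_i$-edges have been removed. First I would introduce the homomorphism $\varphi\colon G\to\mz$ sending $y$ to $1$ and all of $x_1,x_2,t_1,t_2$ to $0$; it is well defined since both defining relations of $G$ preserve the exponent sum in $y$. From the generating set of $H_i$ given in Remark \ref{une remarque} one reads off at once that $\varphi(H_i)=0$, i.e.\ $H_i\subseteq\ker\varphi$. The point of $\varphi$ is the following edge criterion: the $t_i$-edge $(w,wt_i)$ belongs to $E_i=H_i(e,t_i)\cup H_i(t_i,e)$ if and only if $w\in H_i$, so any $t_i$-edge based at a vertex $w$ with $\varphi(w)\neq0$ survives in $\Gamma_c\setminus E_i$; the remaining edges of $\Gamma_c$, labelled $y^{\pm1}$ or $t_{i+1}^{\pm1}$, are never in $E_i$.

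The heart of the argument is a single move: \emph{for any vertex $v$ with $\varphi(v)=0$ and any generator $g$ of $H_i$ (or its inverse) there is an edge-path from $v$ to $vg$ in $\Gamma_c\setminus E_i$.} For $g=t_{i+1}^{\pm1}$ this is one $t_{i+1}$-edge; for $g=x_{i+1}^{\pm1}=(y^{-1}t_{i+1}^{-1}yt_{i+1})^{\pm1}$ one reads a word in $y^{\pm1},t_{i+1}^{\pm1}$, which uses no $t_i$-edge at all; for $g=yt_iy^{-1}$ one takes the up-over-down path reading $y\,t_i\,y^{-1}$, whose only $t_i$-edge is based at $vy$, and $\varphi(vy)=1\neq0$ forces it into $\Gamma_c\setminus E_i$, and symmetrically for $y^{-1}t_iy$ and for all inverses. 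Since $\varphi(g)=0$ for each generator, $\varphi$ remains $0$ at every waypoint $v,vg_1,vg_1g_2,\dots$, so these moves concatenate: for every $h\in H_i$ and every $v$ with $\varphi(v)=0$ there is an edge-path from $v$ to $vh$ in $\Gamma_c\setminus E_i$. Applying this with $v=e$ shows that every $h\in H_i$ --- and these are exactly the initial vertices of the edges of $E^+_i=H_i(e,t_i)$ --- is connected to $e$ in $\Gamma_c\setminus E_i$, which is the first half of the lemma.

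For the terminal vertices $H_it_i$ I would deduce the statement from the same move applied to $v=t_i$ (note $\varphi(t_i)=0$), which connects $t_i$ to $t_ih$ for every $h\in H_i$; it then suffices to know that $t_iH_i=H_it_i$, i.e.\ that $H_i$ is normalised by $t_i$. This I would check on the four generators: $x_{i+1}$ commutes with $t_i$ by the relation $t_i^{-1}x_{i+1}t_i=x_{i+1}$, a short computation with Remark \ref{une remarque} shows that $yt_iy^{-1}$ and $y^{-1}t_iy$ commute with $t_i$, and for the last generator one has
\[
t_i^{-1}t_{i+1}t_i=(y^{-1}t_iy)^{-1}\,t_{i+1}\,(y^{-1}t_iy)\in H_i,
\]
which holds because $y^{-1}t_iy=t_ix_i^{-1}$ while $x_i$ commutes with the whole vertical subgroup. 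The symmetric computation $t_it_{i+1}t_i^{-1}=(y^{-1}t_iy)\,t_{i+1}\,(y^{-1}t_iy)^{-1}$ gives the reverse inclusion, whence $H_it_i=t_iH_i$ and the second half follows.

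The one genuinely delicate point is this last paragraph: the move I construct naturally produces the right-translate $t_iH_i$, whereas the terminal vertices form the set $H_it_i$, and these need not coincide for an arbitrary subgroup. Recognising that $H_i$ is in fact invariant under conjugation by $t_i$ --- the non-obvious ingredient being the identity $t_i^{-1}t_{i+1}t_i=(y^{-1}t_iy)^{-1}t_{i+1}(y^{-1}t_iy)$ --- is what reconciles the two, and is the step I expect to require the most care.
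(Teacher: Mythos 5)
Your proof is correct, and it differs from the paper's argument in two ways worth recording. The connecting paths themselves are the same in both proofs --- words in $t_{i+1}^{\pm 1}$, $(y^{-1}t_{i+1}^{-1}yt_{i+1})^{\pm 1}$, $(yt_iy^{-1})^{\pm 1}$, $(y^{-1}t_iy)^{\pm 1}$, as in Remark \ref{une remarque} --- but the bookkeeping is different. You certify that a path avoids $E_i$ via the exponent-sum homomorphism $\varphi$ (every $t_i$-edge crossed is based at a vertex with nonzero $\varphi$-value, hence outside $H_i$), whereas the paper invokes left $H_i$-invariance of $E_i$ and direct inspection; your $\varphi$ is exactly the morphism $\phi$ that the paper only introduces later, in Claim \ref{vraie claim 2}, for the proof of Lemma \ref{enfin 1}. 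Your criterion is basepoint-independent, which quietly disposes of a point the paper passes over: to treat the terminal vertices the paper left-translates by $t_i$ paths avoiding $E_i$, and since $t_i \notin H_i$ such a translation does not automatically preserve avoidance of $E_i$, while under your criterion there is nothing to check. The second difference is precisely the terminal vertices: the paper builds explicit paths from $t_i$ to $gt_i$ for each generator $g$ of $H_i$, using $t_ix_i=x_it_i$, $t_ix_{i+1}=x_{i+1}t_i$, $t_i(yx_iy^{-1})=(yx_iy^{-1})t_i$, plus an ad hoc detour through $x_i$ for $g=t_{i+1}$; you instead prove once that $t_i$ normalises $H_i$ and rerun the same right-multiplication moves from $v=t_i$. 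Your key identity $t_i^{-1}t_{i+1}t_i=(y^{-1}t_iy)^{-1}t_{i+1}(y^{-1}t_iy)$ is correct, since $y^{-1}t_iy=t_ix_i^{-1}$ and $x_i$ commutes with the whole vertical subgroup, and it is really the paper's detour for $t_{i+1}t_i$ in disguise: the two arguments rest on the same algebra, yours just packages it as conjugation-invariance of $H_i$. Nothing downstream is lost either: the explicit shape of connecting paths recorded in Claim \ref{50} and reused in Lemma \ref{pour le rapporteur II} is exactly the shape your moves produce.
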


\begin{proof}
The edge-path $(e,y^{-1}) (y^{-1},y^{-1} t^{-1}_i)  (y^{-1} t^{-1}_i,y^{-1} t^{-1}_i y)$ connects
 $e$ to $x_it^{-1}_i$ in $\Gamma_c \setminus E_i$. The edge-path $(e,y) (y,y t_i) (y t_i,y t_i y^{-1})$ connects $e$ to $t_i y x_i y^{-1} = y x_i y^{-1} t_i$ in $\Gamma_c \setminus E_i$. The edge $(e,t_{i+1})$ (indices are written modulo $2$) connects $e$ to $t_{i+1}$ in $\Gamma_c \setminus E_i$. The edge-path $(e,t^{-1}_{i+1}) (t^{-1}_{i+1},t^{-1}_{i+1} y) (t^{-1}_{i+1} y,t^{-1}_{i+1} y t_{i+1}) (t^{-1}_{i+1} y t_{i+1},t^{-1}_{i+1} y t_{i+1} y^{-1})$ connects $e$ to $x_{i+1}$.

Since $E_i$ is left $H_i$-invariant, taking and concatenating the left-translates of the previous edge-paths by elements of $H_i$ we connect all the initial vertices of the edges in $\langle x_{i+1}, t_{i+1}, y x_i y^{-1} t_i, x_i t^{-1}_i \rangle (e,t_i) = E_i$ by edge-paths in $\Gamma_c \setminus E_i$.

For connecting $t_i$ to $x_i$ (resp.~$t_i$ to $y x_i y^{-1} t^2_i$, $t_i$ to $x_{i+1} t_i$) in $\Gamma_c \setminus E_i$ just take the left-translate by $t_i$ of the edge-path between $e$ and $x_i t^{-1}_i$ (resp.~between $e$ and $t_i y x_i y^{-1}$, between $e$ and $x_{i+1}$): indeed recall that $t_i x_i = x_i t_i$, $t_i x_{i+1} = x_{i+1} t_i$ and $t_i y x_i y^{-1} = y x_i y^{-1} t_i$. For connecting $t_i$ to $t_{i+1} t_i$: first connect $t_i$ to $x_i$ by the edge-path given above, then $x_i$ to $x_i t_{i+1}$ by the edge $(x_i,x_it_{i+1})$, then $x_i t_{i+1}$ to $x_i t_{i+1} t_i x^{-1}_i = t_{i+1} t_i$ by the left $t_{i+1}$-translate of the edge-path between $t_i$ and $x_i$. We conclude as for the initial vertices.
\end{proof}

\begin{corollary}
\label{corollaire de enfin 0} \hfill

Any element in $G$ is connected either to $e$ or to $t_i$ by an edge-path in $\Gamma_c \setminus E_i$.
\end{corollary}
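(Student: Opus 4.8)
The plan is to deduce Corollary \ref{corollaire de enfin 0} from Lemma \ref{enfin 0} together with the connectivity of $\Gamma_c$, by a short argument on connected components. First I would record the elementary observation that, by Definition \ref{definition enfin}, every edge of $E_i = E^+_i \cup E^-_i$ is a $t_i$-edge of the form $(h,ht_i)$ with $h \in H_i$; consequently the set of all endpoints of edges of $E_i$ is exactly $H_i \cup H_i t_i$. Writing $A$ for the connected component of $e$ and $B$ for the connected component of $t_i$ in $\Gamma_c \setminus E_i$, Lemma \ref{enfin 0} says precisely that $H_i \subseteq A$ (the initial vertices of $E^+_i$, namely $H_i$, are connected to $e$) and $H_i t_i \subseteq B$ (the terminal vertices, namely $H_i t_i$, are connected to $t_i$). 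Hence every endpoint of every edge of $E_i$ already lies in $A \cup B$.

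Next I would show that no third component can occur. Suppose $C$ is a connected component of $\Gamma_c \setminus E_i$ with $C \neq A$ and $C \neq B$. Then $C$ is disjoint from $A \cup B$, so by the previous paragraph $C$ contains no endpoint of any edge of $E_i$; equivalently, no edge of $E_i$ is incident to a vertex of $C$. Now every edge of $\Gamma_c$ either belongs to $E_i$ or belongs to $\Gamma_c \setminus E_i$: the former are not incident to $C$, and the latter, if incident to $C$, have both endpoints in $C$ since $C$ is a connected component of $\Gamma_c \setminus E_i$. Therefore every edge of $\Gamma_c$ incident to a vertex of $C$ stays inside $C$, which means that $C$ is a union of connected components of $\Gamma_c$ itself.

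The key point is then that $\Gamma_c$ is connected, being the Cayley graph of $G$ with respect to the generating set $S_{\mathrm{min}}$ provided by Lemma \ref{yves}. Consequently the only nonempty subset of $G$ that is a union of components of $\Gamma_c$ is $G$ itself, forcing $C = G$; but then $e \in C$, whence $C = A$, contradicting $C \neq A$. It follows that every connected component of $\Gamma_c \setminus E_i$ is either $A$ or $B$, which is exactly the assertion that every element of $G$ is connected to $e$ or to $t_i$ by an edge-path in $\Gamma_c \setminus E_i$.

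I do not expect a serious obstacle here, since all the geometric content has already been absorbed into Lemma \ref{enfin 0}: the paths routing around the deleted edges were constructed there. The only step deserving care is the verification that the endpoints of the deleted edges are genuinely exhausted by $H_i \cup H_i t_i$, so that a hypothetical third component would be untouched by $E_i$; once this is in place, the conclusion is a purely graph-theoretic consequence of the connectivity of the Cayley graph.
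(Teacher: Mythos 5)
Your proof is correct and rests on exactly the same ingredients as the paper's: Lemma \ref{enfin 0} (the endpoints of the deleted edges, namely $H_i$ and $H_i t_i$, lie in the components of $e$ and $t_i$ respectively) together with the connectivity of $\Gamma_c$ as a Cayley graph. The only difference is organizational --- the paper argues directly, taking an arbitrary edge-path from $g$ to $e$ in $\Gamma_c$, truncating it at the initial vertex of the first edge of $E_i$ it meets, and rerouting that vertex to $e$ or $t_i$ via Lemma \ref{enfin 0}, whereas you run the equivalent argument by contradiction on a hypothetical third component; both are valid and essentially the same proof.
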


\begin{proof}
Let $g \in G$ and consider any edge-path $p$ in $\Gamma_c$ from (the vertex of $\Gamma_c$ associated to) $g$ to $e$. If $p \subset \Gamma_c \setminus E_i$ we are done. Otherwise $p$ passes through some edge in $E_i$, we denote by $q$ the subpath of $p$ from $g$ to the initial vertex $v$ of the first edge of $E_i$ in $p$. By Lemma \ref{enfin 0} the initial (resp.~terminal) vertex of each edge in $E^+_i$ is connected to $e$ (resp.~to $t_i$) in $\Gamma_c \setminus E_i$. Thus there is an edge-path $r$ in $\Gamma_c \setminus E_i$ from $v$ to either $e$ or $t_i$. The concatenation $qr$ gives an edge-path in $\Gamma_c \setminus E_i$ from $g$ to either $e$ or $t_i$.
\end{proof}

In what follows, in order to have a more readable text we will write each edge-path as a concatenation of the labels of its edges: in order to ensure that this defines an edge-path in $\Gamma_c \setminus E_i$ the reader will have each time to remind the starting-point of the edge-path.

\begin{lemma}
\label{enfin 1} \hfill

No edge-path in $\Gamma_c \setminus E_i$ containing only $y^{\pm 1}$-edges and $t^{\pm 1}_i$-edges connects $e$ to $t_i$.
\end{lemma}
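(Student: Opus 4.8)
The plan is to localize the forbidden edges and then run a normal-form argument. First I would note that an edge-path issued from $e$ and reading only letters $y^{\pm1},t_i^{\pm1}$ has all of its vertices in the subgroup $\langle y,t_i\rangle$, so the whole question takes place in the Cayley graph of $\langle y,t_i\rangle$ with respect to $\{y,t_i\}$, viewed inside $\Gamma_c$. In this subgraph the edges of $E_i$ are exactly the $t_i$-cells $\{g,gt_i\}$ whose lower endpoint lies in $H_i\cap\langle y,t_i\rangle$ (for the forward orientation $g\in H_i$, for the backward one $gt_i^{-1}\in H_i$). The key preliminary observation is that the homomorphism $\varepsilon\colon G\to\mz$ determined by $\varepsilon(y)=1$ and $\varepsilon(x_1)=\varepsilon(x_2)=\varepsilon(t_1)=\varepsilon(t_2)=0$ is well defined (it respects $t_j^{-1}yt_j=yx_j$ since $\varepsilon(yx_j)=1$) and kills every generator of $H_i$; hence $H_i\subseteq\ker\varepsilon$. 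As $\varepsilon$ is constant along $t_i$-edges, every forbidden $t_i$-crossing takes place between two vertices of $\varepsilon$-height $0$. In particular the single edge $(e,t_i)$ is forbidden, so the naive path is already excluded.

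Next I would reduce to a combinatorial statement about words. Deleting backtracks only removes edges, so I may assume the path is reduced and reads a freely reduced word $W$ in $y,t_i$ with $\pi(W)=t_i$. Writing each visited vertex in its normal form $t_i^{\,n}w$, $w\in\langle x_i,y\rangle=\F{2}$, the $y$-edges act by $w\mapsto wy^{\pm1}$ (the height $\varepsilon$ changing by $\pm1$) and the $t_i$-edges act by $n\mapsto n\pm1$, $w\mapsto\sigma(t_i)^{\pm1}(w)$ (the height unchanged); the target $t_i$ is the vertex $n=1$, $w=1$. The heart of the proof is to show that one cannot reach this vertex while keeping every height-$0$ $t_i$-crossing based outside $H_i$. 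For this I would first pin down $H_i\cap\langle y,t_i\rangle$: Remark \ref{une remarque} gives the columns $yt_i^{\,k}y^{-1},\,y^{-1}t_i^{\,k}y\in H_i$, the generators $x_{i+1},t_{i+1}$ lie in $H_i$, while $x_i=(x_it_i^{-1})t_i\in H_it_i$ shows $x_i$ sits on the opposite side; together with Lemma \ref{enfin 0} this identifies exactly which height-$0$ base vertices yield forbidden edges. I would then argue by induction on the number of height-$0$ $t_i$-crossings of the path (equivalently, on the combinatorial complexity of $w$), showing that unwinding the normal form back to $w=1$ at level $n=1$ is impossible unless at some stage a $t_i$-move is performed from a vertex of $H_i$.

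The step I expect to be the genuine obstacle is exactly this unwinding. The twisting $\sigma(t_i)\colon y\mapsto yx_i$ couples the horizontal ($y$) and vertical ($t_i$) directions, so no homomorphism separates $e$ from $t_i$: the vertical counter $\phi_i$ (with $\phi_i(t_i)=1$, $\phi_i(y)=0$) distinguishes them but is freely crossable, whereas $\varepsilon$ does not distinguish them; and one checks that collapsing the $y$-edges does not produce a well-defined quotient graph, precisely because $\sigma(t_i)$ does not descend to the cosets of $\langle y\rangle$. Moreover $H_i\cap\langle y,t_i\rangle$ is not normal, so it behaves like an edge stabilizer rather than the kernel of a height function. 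The proof therefore cannot rest on an abelian invariant and must instead control the cancellations occurring between the $\sigma(t_i)$-images and the ambient $y$-syllables, in the same spirit as the cancellation bookkeeping carried out in the Claim inside the proof of Lemma \ref{facile}. Making that cancellation analysis precise, i.e.\ showing that every attempt to bring the $w$-part back to the empty word is routed through $H_i$, is where the real work lies.
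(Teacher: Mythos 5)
Your preliminary reductions are correct, and in fact they coincide with the paper's own starting point: your homomorphism $\varepsilon$ is exactly the morphism $\phi$ of Claim \ref{vraie claim 2}, and your observation that every edge of $E_i$ is a $t_i$-edge based at a vertex of $H_i\subseteq\Ker{\phi}$ is precisely how the paper uses it. But the proposal stops exactly where the proof has to begin. Everything after ``I would then argue by induction'' is a restatement of the goal rather than an argument: no induction hypothesis is formulated, there is no base case or inductive step, and your closing paragraph explicitly concedes that the ``unwinding'' step --- showing that any path returning the $w$-part to $1$ at level $n=1$ must cross an edge of $E_i$ --- is an unresolved obstacle. That step is not a technical detail; it is the entire content of the lemma, and, as you yourself diagnose, it cannot be extracted from abelian invariants such as $\varepsilon$ or $\phi_i$ alone.

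For comparison, the paper fills this gap with a concrete mechanism that has no counterpart in your sketch. Among all reduced paths in $\Gamma_c\setminus E_i$ from $e$ to $t_i$ using only $y^{\pm1}$- and $t_i^{\pm1}$-edges, it takes one minimizing the number of horizontal edges. The horizontal-wall structure (Lemma \ref{facile}, Claims \ref{vraie claim 3} and \ref{jpo}) together with this minimality forces such a path, when its first horizontal edge is a $y$-edge, to be a concatenation of subpaths reading $t_i^{k_{2j}}\, y\, t_i^{k_{2j+1}}\, y^{-1}$ (Claims \ref{claim 2}--\ref{claim 4}); the case of a leading $y^{-1}$-edge is reduced to this one in Claim \ref{claim 8}. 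Then, because $y\, t_i^{k}\, y^{-1}\in H_i$, left-translating suffixes of the path by inverses of these elements keeps it in $\Gamma_c\setminus E_i$, which yields the inequality $\sum_j k_{2j}\le 0$ (Claim \ref{claim 5}); finally the normal-form computation of Claim \ref{claim 6} shows that the $x_i$-exponent of the endpoint equals $\sum_j k_{2j+1}$, which must be strictly positive since the total $t_i$-exponent is $1$ --- contradicting that the endpoint $t_i$ has $x_i$-exponent $0$ (Claim \ref{claim 7}). It is this interplay between minimality, the wall structure, $H_i$-equivariance, and the two exponent counts that constitutes the proof; your plan would need to reconstruct all of it (or an equivalent cancellation analysis), so as it stands the proposal is a correct framing plus an accurate diagnosis of the difficulty, but not a proof.
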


Since the proof below is rather long, we first give an idea of what happens: the only way to go from $e$ to $x_i$ or $t_i$ without going through an $x^{\pm 1}_i$-edge (which does not exist in $\Gamma_c$) is to go through $t^{\pm 1}_i$-edges appearing in the relations $(y^{-1} t^{-1}_i y) t_i = x_i$ or $(y t_i y^{-1}) t_i (y t^{-1}_i y^{-1}) = t_i$ or their inverses (in parentheses the generators of $H_i$ involved - the $t^{\pm 1}_i$-edges in $E_i$ are those outside the parentheses). As the reader can check (see Figure \ref{je rajoute}), the edges in $E_i$ have been chosen to ``cut'' these relations. Each time one goes through a $y^{\pm 1}$-edge, one crosses a horizontal wall and since $e$ and $t_i$ are in the same side of any horizontal wall (Claim \ref{vraie claim 3}) one has to cross it back, and in particular when crossing $(g,gy)$ one has to go back to the right vertical orbit of $g$, i.e.~to some $gt$, $t \in \F{2}$ (Claim \ref{jpo}). If this is not a $H_i$-translate of the right vertical orbit of the identity of $G$, then one has a vertical short-cut from $g$ to $gt$. In this way, by an induction process, we prove that reduced edge-paths in $\Gamma_c \setminus E_i$ going from $e$ to $t_i$, which minimize the number of horizontal edges crossed, are concatenations of subpaths of the form $t^k_i y t^l_i y^{-1}$ (at least when assuming that one began with a $y$-edge as first horizontal edge - see Claim \ref{claim 4}). Claims \ref{claim 5} and \ref{claim 6} rely upon the fact that left-translating an edge-path in $\Gamma_c \setminus E_i$ by an element in $H_i$ yields an edge-path in $\Gamma_c \setminus E_i$: each subpath $y t^l_i y^{-1}$ defines such an element. Hence any subpath of $p$ remains in the same side of $E_i$ as $e$, hence the contradiction (Claim \ref{claim 7}). By Claim \ref{claim 8}, if the first horizontal edge in $p$ is a $y^{-1}$-edge, then there exists a reduced edge-path in $\Gamma_c \setminus E_i$ from $e$ to $t_i$ whose first horizontal edge is a $y$-edge and minimizing the number of horizontal edges it crosses, hence the conclusion (Claim \ref{claim 8} is proved by arguments similar to those exposed above).

\begin{figure}[htbp]
{\centerline{\includegraphics[height=9cm, viewport = 30 335 598 830,clip]{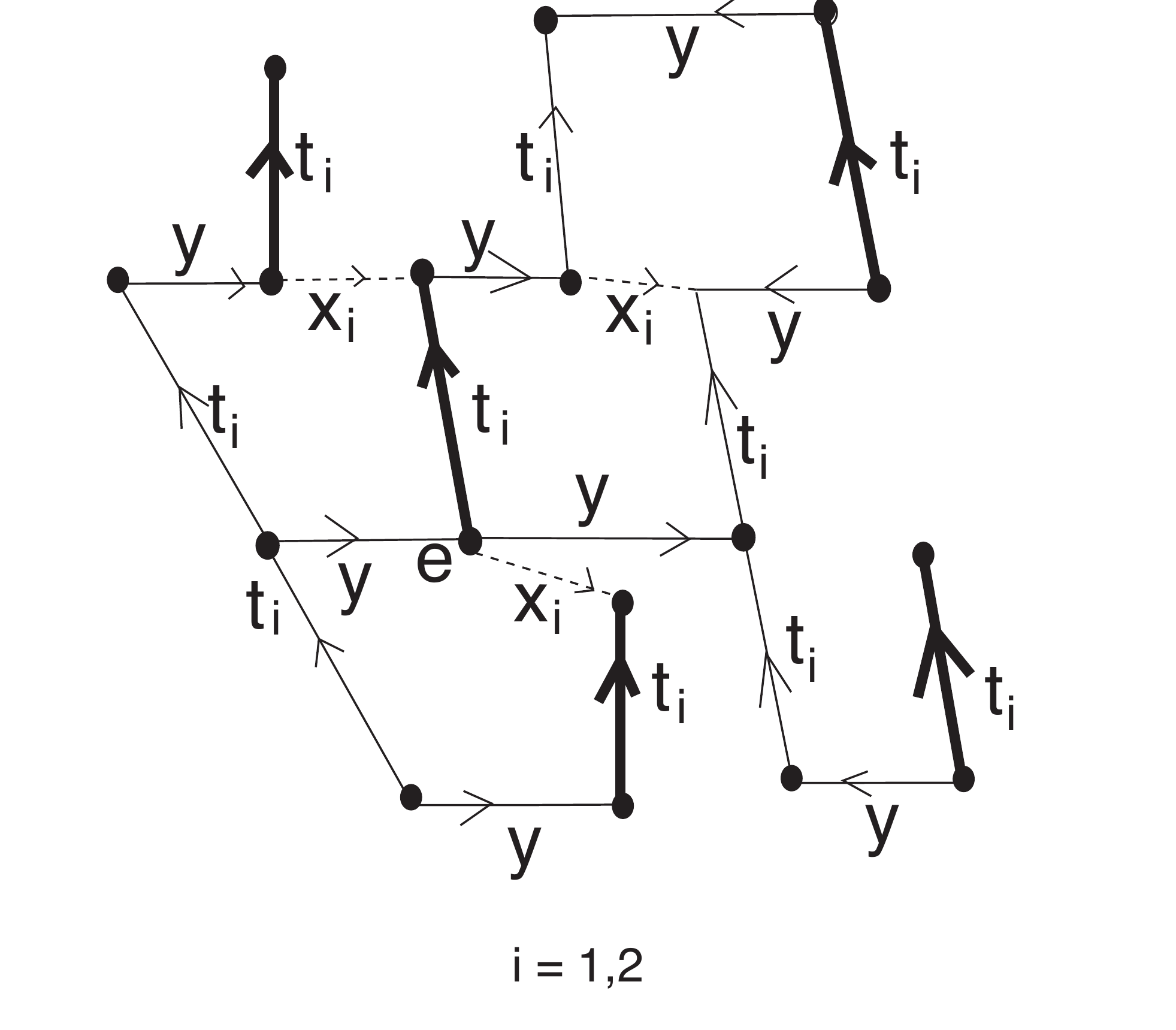}}} \caption{\label{je rajoute}}
\end{figure}

\begin{proof}
We begin with an easy assertion about horizontal walls:

\begin{claim}
\label{vraie claim 3}
The vertices $e$ and $t_i$ are in the same side of any horizontal wall $g({\mathcal Y},{\mathcal Y}^c)$. 
\end{claim}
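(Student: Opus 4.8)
The plan is to read off Claim \ref{vraie claim 3} immediately from Lemma \ref{facile}, which has already done all the work: it asserts that \emph{each side} of \emph{each} horizontal wall is preserved by the right-action of the vertical subgroup. The point to make is simply that passing from $e$ to $t_i$ is right-multiplication by the vertical generator $t_i$, and by Lemma \ref{facile} such a right-multiplication maps each side of a horizontal wall to itself, hence cannot move a point across the wall.

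Concretely, I would fix an arbitrary horizontal wall $g({\mathcal Y},{\mathcal Y}^c)$ and denote its two sides by $\mathcal H := g{\mathcal Y}$ and $\mathcal H^c := g{\mathcal Y}^c$. These two sets partition $G$, so $e$ lies in exactly one of them; say $e \in \mathcal H$, the other case being symmetric. Since $t_i$ belongs to the vertical subgroup $\F{2}$, Lemma \ref{facile} applied to the horizontal wall $(\mathcal H, \mathcal H^c)$ gives $\mathcal H t_i = \mathcal H$. As $t_i = e\, t_i$, we conclude
$$ t_i = e\, t_i \in \mathcal H t_i = \mathcal H, $$
so that $e$ and $t_i$ lie on the same side $\mathcal H$ of the wall $g({\mathcal Y},{\mathcal Y}^c)$. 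Because the wall was arbitrary, this proves the claim.

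I do not expect any genuine obstacle here: the entire substance is contained in Lemma \ref{facile}, and the only care required is the bookkeeping that the two sides of the \emph{translated} wall $g({\mathcal Y},{\mathcal Y}^c)$ are exactly the sets $g{\mathcal Y}$ and $g{\mathcal Y}^c$ to which that lemma applies (Lemma \ref{facile} being stated for all horizontal walls, not merely for $({\mathcal Y},{\mathcal Y}^c)$). This short claim is precisely the input needed later, namely that any reduced edge-path in $\Gamma_c \setminus E_i$ from $e$ to $t_i$ must cross each horizontal wall an even number of times, which is what makes the subsequent parity/induction argument in the proof of Lemma \ref{enfin 1} go through.
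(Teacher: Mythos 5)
Your proof is correct and is essentially identical to the paper's own argument: both deduce the claim directly from Lemma \ref{facile} (right-invariance of each side of a horizontal wall under the vertical subgroup) together with the observation that $t_i = e\,t_i$, so $e$ and $t_i$ lie in the same right vertical orbit. Your write-up merely spells out the bookkeeping (naming the sides $g{\mathcal Y}$, $g{\mathcal Y}^c$ and the case split on where $e$ lies) that the paper leaves implicit.
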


\begin{proof}
By Lemma \ref{facile} each side of each horizontal wall is invariant under the right-action of the vertical subgroup. Of course $e$ and $t_i = e t_i$ belong to the same orbit for the right-action of the vertical subgroup hence the claim.
\end{proof}

Now an easy assertion about Definition \ref{definition enfin}:

\begin{claim}
\label{vraie claim 2}
Let $\phi \colon G \rightarrow \mz$ be the morphism defined by $\phi(y) = 1$ and $\phi(t_j) = 0$ for $j=1,2$ (since the sum of the exponents of the letters $y$ in the relators of $G$ is zero, this morphism is well-defined and since the elements $y, t_1, t_2$ generate $G$ - see Lemma \ref{yves} - it is defined over the whole group $G$). If $(g,gt_i)^{\pm 1}$ is any edge in $E_i$ then for any vertical element $t \in \F{2}$ the element $gt$ belongs to $\Ker{\phi}$.
\end{claim}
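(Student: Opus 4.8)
The plan is to reduce the claim to the assertion $H_i \subseteq \Ker{\phi}$ and then use that $\phi$ is a homomorphism vanishing on the vertical subgroup. First I would make explicit which $t_i$-edges belong to $E_i$. Since $E^+_i = H_i(e,t_i)$ is the set of edges $(h,ht_i)$ with $h \in H_i$, and $E^-_i = H_i(t_i,e)$ is the set of their reverses $(ht_i,h)$, an edge presented as $(g,gt_i)^{\pm 1}$ lies in $E_i$ if and only if $g \in H_i$: indeed $(g,gt_i) = (h,ht_i)$ forces $g = h$, while $(gt_i,g) = (ht_i,h)$ again forces $g = h$ (the ``crossed'' possibilities would require $t_i^2 = e$ and are impossible). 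So it suffices to prove that $\phi(gt) = 0$ whenever $g \in H_i$ and $t \in \F{2}$.

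Next I would evaluate $\phi$ on the four generators of $H_i = \langle x_{i+1}, t_{i+1}, yx_iy^{-1}t_i, x_it^{-1}_i \rangle$. Observe first that $\phi(x_j) = 0$ for each $j$, either because the $y$-exponent sum of $x_j$ is zero or, more concretely, from the expression $x_j = y^{-1}t^{-1}_j y t_j$ of Lemma \ref{yves}, which gives $\phi(x_j) = -1 + 0 + 1 + 0 = 0$. Combined with $\phi(t_j) = 0$ and $\phi(y) = 1$, each generator is seen to lie in $\Ker{\phi}$: one has $\phi(x_{i+1}) = 0$, $\phi(t_{i+1}) = 0$, $\phi(yx_iy^{-1}t_i) = 1 + 0 - 1 + 0 = 0$, and $\phi(x_it^{-1}_i) = 0$. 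Since $\Ker{\phi}$ is a subgroup, this yields $H_i \subseteq \Ker{\phi}$.

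Finally, every vertical element $t \in \F{2}$ is a word in $t_1,t_2$, so $\phi(t) = 0$; hence for $g \in H_i$ we obtain $\phi(gt) = \phi(g) + \phi(t) = 0$, i.e.\ $gt \in \Ker{\phi}$, as required. I expect no genuine obstacle here: the morphism $\phi$ is already granted to be well-defined in the statement, and the whole argument rests on the single structural observation that all four generators chosen for $H_i$ have $y$-exponent sum zero. The only point demanding a little care is the edge bookkeeping of the first step, namely that membership of $(g,gt_i)^{\pm 1}$ in $E_i$ is exactly the condition $g \in H_i$.
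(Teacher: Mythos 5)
Your proof is correct and takes essentially the same route as the paper: reduce membership of $(g,gt_i)^{\pm 1}$ in $E_i$ to the condition $g \in H_i$, verify that each of the four generators of $H_i$ has $y$-exponent sum zero so that $H_i \subseteq \Ker{\phi}$, and conclude since $\phi$ vanishes on all vertical elements. Your explicit edge bookkeeping and generator-by-generator evaluation of $\phi$ merely spell out details the paper's terser proof leaves implicit.
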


\begin{proof}
Definition \ref{definition enfin} gives $E_i =H_i (e,t_i)^{\pm 1}$ with $H_i =  \langle x_{i+1},t_{i+1},y x_i y^{-1} t_i, x_i t^{-1}_i \rangle$. Each element in the subgroup  $H_i$ belongs to $\Ker{\phi}$ since this is true for each generator. The claim follows from the fact that $e$ belongs to $\Ker{\phi}$ and all the elements in the vertical orbit of an element in $\Ker{\phi}$ also are in $\Ker{\phi}$ since $\phi(t_j) = 0$ for $j=1,2$.
\end{proof}

Let ${\mathcal P}_{e,t_i}$ be the set of all the reduced edge-paths in $\Gamma_c \setminus E_i$ from $e$ to $t_i$ passing only through $y^{\pm 1}$- and $t^{\pm 1}_i$-edges. Let ${\mathcal P}^{min}_{e,t_i} \subset {\mathcal P}_{e,t_i}$ be the subset formed by all the edge-paths in ${\mathcal P}_{e,t_i}$ minimizing the number of horizontal edges they cross. Let $p \in {\mathcal P}^{min}_{e,t_i}$.

\begin{claim}
\label{claim preliminaire}
There is at least one horizontal edge in $p$ and this $y$- or $y^{-1}$-edge begins at the vertical orbit of $e$.
\end{claim}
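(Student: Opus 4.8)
The plan is to prove the two assertions of the claim separately, both by confining the non-horizontal portion of $p$ to the bi-infinite ``$t_i$-line'' and using the single removed edge $(e,t_i)$.

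First I would establish that $p$ must contain a horizontal edge. Suppose not; then, since $p \in {\mathcal P}_{e,t_i}$ uses only $y^{\pm1}$- and $t_i^{\pm1}$-edges, it would consist entirely of $t_i^{\pm1}$-edges. Such a path cannot leave the line $L$ whose vertices are the powers $\{t_i^k : k \in \mz\}$ (pairwise distinct, since $t_i$ has infinite order in $G$) and whose edges are the $t_i^{\pm1}$-edges joining consecutive powers: indeed a $t_i^{\pm1}$-edge issuing from $t_i^k$ can only reach $t_i^{k\pm1}$. I would then observe that the unique $1$-cell of $L$ joining $e = t_i^0$ to $t_i = t_i^1$ is $(e,t_i)$, and that $(e,t_i)^{\pm1} \in E_i$: this is immediate from $e \in H_i$ together with the description $E_i = H_i(e,t_i)^{\pm1}$ of Definition \ref{definition enfin}. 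Hence this $1$-cell is deleted in $\Gamma_c \setminus E_i$, and its deletion separates $\{t_i^k : k \le 0\}$ from $\{t_i^k : k \ge 1\}$ in the tree $L$; in particular it separates $e$ from $t_i$, contradicting the fact that $p$ runs from $e$ to $t_i$ inside $\Gamma_c \setminus E_i$. This forces at least one horizontal edge in $p$.

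Next I would locate the first horizontal edge, say $(g, gy^{\pm1})$. By definition of ${\mathcal P}_{e,t_i}$ every edge of $p$ preceding it is a $t_i^{\pm1}$-edge, so the initial segment of $p$ from $e$ to $g$ again stays inside the line $L$ by the same reasoning, whence $g = t_i^k$ for some $k \in \mz$. Thus $g \in \langle t_i\rangle \subseteq \F{2}$ lies in the right vertical orbit $e\F{2} = \F{2}$ of $e$, which is exactly the claim that the first horizontal edge of $p$ begins at the vertical orbit of $e$.

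I do not expect a genuine obstacle in this claim; the one point to get exactly right is that the single $1$-cell separating $e$ from $t_i$ along the $t_i$-line is one of the removed edges of $E_i$, which rests only on the containment $e \in H_i$. Deleting the further edges of $E_i$ meeting $L$ (those $(t_i^k,t_i^{k+1})$ with $t_i^k \in H_i$) can only increase the disconnection and is therefore harmless to the argument.
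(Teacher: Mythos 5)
Your proof is correct and follows essentially the same route as the paper: both arguments exploit the fact that the vertical edges through the orbit of $e$ form a tree, that the $1$-cell $(e,t_i)$ lies in $E_i$ (since $e \in H_i$) and its removal disconnects $e$ from $t_i$ among purely vertical paths, and that the segment of $p$ preceding its first horizontal edge is vertical and hence stays in the orbit of $e$. The only cosmetic difference is that you confine the argument to the $t_i$-line $\{t_i^k : k \in \mz\}$ (legitimate, since paths in ${\mathcal P}_{e,t_i}$ use no $t_{i+1}^{\pm 1}$-edges), whereas the paper works in the full $\F{2}$-tree of the vertical orbit.
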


\begin{proof}
The set of all the vertical edges between vertices in a same vertical right-orbit forms a tree (a copy of the Cayley graph of $\F{2}$). Of course the vertices $e$ and $t_i$ belong to the same vertical right-orbit. The non-existence of a vertical edge-path between $e$ and $t_i$ then follows from the fact that $(e,t_i)^{\pm 1}$ belongs to $E_i$. The conclusion follows.
\end{proof}

\begin{claim}
\label{jpo}
Assume that $p$ goes through the $y$-edge $(g,gy)$, $g \in G$. Then there is $k \in \mz$ such that $p$ goes through the $y^{-1}$-edge $(gt^k_i y,gt^k_i)$.
\end{claim}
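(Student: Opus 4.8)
The plan is to identify the one horizontal wall that the $y$-edge $(g,gy)$ crosses, show that $p$ is forced to cross it back, and then pin down the return crossing by using that $p$ only ever moves along $t_i^{\pm 1}$- and $y^{\pm 1}$-edges. First I would apply Claim \ref{portes ouvertes} with $t=e$: the edge $(g,gy)$ is one of the edges in $\bigcup_{t\in\F{2}}(gt,gty)^{\pm1}$ whose removal disconnects $\Gamma_c$ into the two sides of the horizontal wall $W:=g(\mathcal Y,\mathcal Y^c)$. Hence $(g,gy)$ crosses $W$, and moreover the only edges of $\Gamma_c$ crossing $W$ are these doors $(gt,gty)^{\pm1}$, $t\in\F{2}$. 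By Lemma \ref{facile} the vertices $\{gt : t\in\F{2}\}$ all lie on one side of $W$ and the vertices $\{gty : t\in\F{2}\}$ all lie on the other. Since by Claim \ref{vraie claim 3} the endpoints $e$ and $t_i$ of $p$ lie on the same side of $W$, the path $p$ crosses $W$ an even number of times; as it crosses it at least once (through $(g,gy)$), it crosses it at least twice.

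Next I would cut down the possible crossings. Let $\rho\colon G\to\F{2}$ be the quotient by the normal horizontal subgroup, so that $\rho(y)=e$ and $\rho(t_j)=t_j$. Since $p$ lies in $\Gamma_c\setminus E_i$ and passes only through $y^{\pm1}$- and $t_i^{\pm1}$-edges, reading off the edge labels from $e$ shows that every vertex $v$ of $p$ satisfies $\rho(v)\in\langle t_i\rangle$; in particular $\rho(g)\in\langle t_i\rangle$. Now suppose $p$ crosses $W$ through a door $(gt,gty)^{\pm1}$ with $t\in\F{2}$. Then $gt$ is a vertex of $p$, so $\rho(g)\,t=\rho(gt)\in\langle t_i\rangle$, and since $\rho(g)\in\langle t_i\rangle$ this forces $t\in\langle t_i\rangle$, i.e.\ $t=t_i^k$ for some $k\in\mz$. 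Thus every crossing of $W$ by $p$ is a traversal of an edge $(gt_i^k,gt_i^k y)$, in one of its two orientations.

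Finally I would run the parity count. Traversing such a door in the direction $gt_i^k\to gt_i^k y$ is a $y$-edge sending $p$ from the side of $W$ containing $\{gt\}$ to the side containing $\{gty\}$, while the opposite orientation $gt_i^k y\to gt_i^k$ is a $y^{-1}$-edge $(gt_i^k y,gt_i^k)$ sending $p$ the other way. Because $p$ begins and ends on the same side of $W$, these two oriented crossings occur equally often; since $p$ crosses $W$ at least twice, it must cross it at least once in the direction $gt_i^k y\to gt_i^k$, that is, it goes through a $y^{-1}$-edge $(gt_i^k y,gt_i^k)$ for some $k\in\mz$, which is exactly the assertion. I expect the only genuinely delicate step to be the reduction of the second paragraph: one must know that the side of $W$ is constant along $\{gt\}$ (this is precisely what Lemma \ref{facile} supplies) and that no crossing of $W$ can occur through a door with $t\notin\langle t_i\rangle$, which is where the hypothesis that $p$ uses $t_i^{\pm1}$-edges but no $t_{i+1}^{\pm1}$-edges is essential.
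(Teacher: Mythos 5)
Your proof is correct and takes essentially the same route as the paper's: Claim \ref{vraie claim 3} together with Claim \ref{portes ouvertes} forces $p$ to cross the wall $g({\mathcal Y},{\mathcal Y}^c)$ back through one of the door edges $(gt,gty)^{\pm 1}$, and the restriction of $p$ to $y^{\pm 1}$- and $t^{\pm 1}_i$-edges pins that return door at $t = t^k_i$. Your retraction $\rho$ nicely makes explicit the step the paper leaves implicit (why $t$ must be a power of $t_i$); the only slight imprecision is attributing to Lemma \ref{facile} the fact that all $gty$, $t \in \F{2}$, lie on a common side of the wall (the set $\{gty : t \in \F{2}\}$ is not a single right vertical orbit), but this fact is immediate from Definition \ref{ensemble elementaire}, since $ty$ admits the reduced representative $t\,y$.
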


\begin{proof}
By definition $p$ goes from $e$ to $t_i$. By Claim \ref{vraie claim 3}, $e$ and $t_i$ are in the same side of any horizontal wall. By Claim \ref{portes ouvertes}, if $p$ goes through the $y$-edge $(g,gy)$ then it changes side in the horizontal wall $g (\mathcal Y,{\mathcal Y}^c)$ and has to cross back some $y$-edge with initial vertex in the orbit of $g$ under the right action of the vertical subgroup. This is exactly Claim \ref{jpo}.
\end{proof}

\begin{claim}
\label{claim 2}
Assume that the first horizontal edge in $p$ is a $y$-edge. Then $p$ admits an initial subpath of the form $t^{k_0}_i y t^{k_1}_i y^{-1}$ ($k_0$ might be zero).
\end{claim}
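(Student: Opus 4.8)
The plan is to argue by contradiction on the \emph{second} horizontal edge of $p$, playing off the minimality of $p$ against Claim \ref{jpo} and the height morphism $\phi$ of Claim \ref{vraie claim 2}. First I would fix the starting configuration. By Claim \ref{claim preliminaire} the first horizontal edge is the $y$-edge $(g,gy)$ with $g=t_i^{k_0}$ for some $k_0\in\mz$ (the edge-path uses only $t_i^{\pm1}$- and $y^{\pm1}$-edges, so the initial run before it is purely vertical, possibly empty). Every edge of $p$ strictly between its first and second horizontal edges is then a $t_i^{\pm1}$-edge, so the stretch from $gy$ to the initial vertex $P$ of the second horizontal edge reads $t_i^{k_1}$ and $P=gy\,t_i^{k_1}$; since $\phi(g)=0$ we get $\phi(P)=1$. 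A second horizontal edge must exist, because $p$ ends at $t_i$ with $\phi(t_i)=0$ while $P$ sits at height $1$. It then suffices to show that this second horizontal edge is a $y^{-1}$-edge: that produces the announced initial subpath $t_i^{k_0}y\,t_i^{k_1}y^{-1}$, reducedness forcing $k_1\neq0$.

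So suppose the second horizontal edge is a $y$-edge $(P,Py)$. Applying Claim \ref{jpo} to it gives $k\in\mz$ with $p$ traversing the $y^{-1}$-edge $(Pt_i^ky,\,Pt_i^k)$; as every vertex of $p$ up to and including $P$ has height $\le1$ while $\phi(Pt_i^ky)=2$, this return edge lies strictly after $P$. Let $q$ be the contiguous subpath of $p$ running from $P$ to $Pt_i^k$: it begins with the up-edge $(P,Py)$ and ends with the down-edge $(Pt_i^ky,Pt_i^k)$, so it carries at least two horizontal edges (the degenerate case, where $q$ would reduce to the backtrack $yy^{-1}$, is excluded since $p$ is reduced). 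I would replace $q$ by the purely vertical segment $t_i^k$ joining its endpoints $P$ and $Pt_i^k$. The essential point is that this segment lies in $\Gamma_c\setminus E_i$: it only starts $t_i$-edges at vertices $Pt_i^j$, all of height $\phi(Pt_i^j)=1\neq0$, and since $H_i\subset\Ker{\phi}$ (as established inside the proof of Claim \ref{vraie claim 2}) none of these vertices lies in $H_i$, whence none of the traversed edges belongs to $E_i=H_i(e,t_i)^{\pm1}$. The resulting walk still joins $e$ to $t_i$ within $\Gamma_c\setminus E_i$ using only $y^{\pm1}$- and $t_i^{\pm1}$-edges, and after reduction (which cannot increase the number of horizontal edges) it lies in ${\mathcal P}_{e,t_i}$ and crosses strictly fewer horizontal edges than $p$, contradicting $p\in{\mathcal P}^{min}_{e,t_i}$. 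Hence the second horizontal edge is a $y^{-1}$-edge, as desired.

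I expect the genuine content to lie not in any single inequality but in the replacement step, and the one verification that really matters is that the vertical shortcut avoids $E_i$. This is precisely where $\phi$ is indispensable: because $E_i$ is built entirely from $t_i$-edges based at the height-zero subgroup $H_i$, any vertical detour carried out one level up, at height $1$, is automatically admissible, so an excursion that momentarily climbs to height $2$ can always be flattened at a strict gain. The remaining difficulty is purely organizational---checking that $q$ is a well-defined contiguous subpath occurring after $P$, that it carries at least two horizontal edges, and that the degenerate backtrack is ruled out by reducedness---rather than conceptual.
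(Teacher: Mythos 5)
Your proof is correct and takes essentially the same route as the paper's: assuming the second horizontal edge is a $y$-edge, you use Claim \ref{jpo} (the paper invokes Claims \ref{portes ouvertes} and \ref{vraie claim 3} directly, which is what Claim \ref{jpo} packages) to locate the returning $y^{-1}$-edge, then replace the intervening excursion by a vertical $t_i$-segment, verified to avoid $E_i$ via the height morphism $\phi$ of Claim \ref{vraie claim 2}, contradicting minimality of $p$. Your extra verifications (existence of a second horizontal edge, the return edge occurring after $P$, exclusion of the degenerate backtrack) are details the paper leaves implicit, not a change of method.
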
 

\begin{proof}
Assume that $p$ does not satisfy the announced property. Then the second horizontal edge in $p$ also is a $y$-edge. Let $(\mathcal H,{\mathcal H}^c)$ be the horizontal wall associated to this $y$-edge.  By Claim \ref{vraie claim 3} $e$ and $t_i$ lie in the same side of $(\mathcal H,{\mathcal H}^c)$. Thus $p$ crosses back $(\mathcal H,{\mathcal H}^c)$. By Claim \ref{portes ouvertes} and since we assumed that $p$ does not cross any $t^{\pm 1}_{i+1}$-edge (indices are written modulo $2$), there is a non-trivial subpath $q_1$ of $p=q_0q_1q_2$ starting at the initial vertex $g$ of the above $y$-edge and going back to some $gt^m_i$ ($m \in \mz$). But the initial subpath of $p$ leading to $g$ is written as  $t^{k_0}_i y t^{k_1}_i$. The element in $G$ that it defines does not belong to $\Ker{\phi}$, see Claim \ref{vraie claim 2}. By this last claim no edge between two vertices in the right orbit of $g$ under the vertical subgroup belongs to $E_i$. Hence the vertical edge-path $r$ between $g$ and $gt^m_i$ is an edge-path in $\Gamma_c \setminus E_i$: it has the same endpoints as $q_1 \subset p=q_0q_1q_2$ and crosses at least one horizontal edge less than $q$ since $q_1$ is a reduced edge-path starting with a $y$-edge. Therefore the concatenation $q_0 r q_2$, possibly after reduction, defines an edge-path in ${\mathcal P}_{e,t_i}$ which crosses at least one horizontal edge less than $p$: this is a contradiction with $p \in {\mathcal P}^{min}_{e,t_i}$.
\end{proof}

\begin{claim}
\label{claim 3}
Assume that $p$ admits an initial subpath $q_0$ of the form $t^{k_0}_i y t^{k_1}_i y^{-1} \cdots t^{k_{2j}}_i y t^{k_{2j+1}}_i y^{-1}$. If the first horizontal edge following $q_0$ in $p$ is a $y$-edge then $p$ admits an initial subpath of the form $q_0 t^{k_{2j+2}}_i y t^{k_{2j+3}}_i y^{-1}$.
\end{claim}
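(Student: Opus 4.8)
The plan is to run the very same argument as in the proof of Claim \ref{claim 2}, of which the present claim is the inductive counterpart: there the initial block $t^{k_0}_i y t^{k_1}_i y^{-1}$ was extracted from the hypothesis that $p$ begins with a $y$-edge, and here the next block $t^{k_{2j+2}}_i y t^{k_{2j+3}}_i y^{-1}$ will be extracted from the hypothesis that the first horizontal edge following $q_0$ is a $y$-edge.

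First I would argue by contradiction, assuming that $p$ does not admit an initial subpath of the announced form. Since $p$ crosses only $y^{\pm 1}$- and $t^{\pm 1}_i$-edges, this forces the first two horizontal edges following $q_0$ to be both $y$-edges; write $g$ for the initial vertex of the second of them. The subpath of $p$ running from $e$ to $g$ then reads $q_0 t^{k_{2j+2}}_i y t^{k_{2j+3}}_i$ for suitable exponents, the only horizontal letter appearing after $q_0$ being the single $y$ just crossed.

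Next I would manufacture a vertical short-cut, exactly as in Claim \ref{claim 2}. By Claim \ref{vraie claim 3} the vertices $e$ and $t_i$ lie on the same side of the horizontal wall attached to this second $y$-edge, so $p$ must cross that wall back; by Claim \ref{portes ouvertes} (equivalently by Claim \ref{jpo}) there is a subpath $q_1$ of $p = q'_0 q_1 q_2$ starting at $g$ and returning to some $g t^m_i$. The crucial point is the $\phi$-computation: the block $q_0$ carries net $y$-exponent zero, so $\phi(\pi(q_0 t^{k_{2j+2}}_i y t^{k_{2j+3}}_i)) = 1$ and hence $g \notin \Ker{\phi}$; by Claim \ref{vraie claim 2} no $t_i$-edge between two vertices of the right vertical orbit of $g$ can lie in $E_i$, so the vertical edge-path $r$ from $g$ to $g t^m_i$ stays inside $\Gamma_c \setminus E_i$.

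Finally I would substitute $r$ for $q_1$: the path $r$ has the same endpoints as $q_1$ but crosses strictly fewer horizontal edges (indeed none, while $q_1$ opens with a $y$-edge), so $q'_0 r q_2$, possibly after reduction, lies in ${\mathcal P}_{e,t_i}$ and crosses fewer horizontal edges than $p$, contradicting $p \in {\mathcal P}^{min}_{e,t_i}$. The only genuine subtlety, and the step I expect to be the main obstacle, is precisely this $\phi$-computation: one must be certain that the accumulated prefix $q_0$ has balanced $y$-exponents — which is exactly why it was built as alternating $y \cdots y^{-1}$ blocks — so that the single extra $y$ pushes $g$ off $\Ker{\phi}$ and thereby licenses the application of Claim \ref{vraie claim 2}. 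Everything else is a verbatim repetition of the proof of Claim \ref{claim 2}.
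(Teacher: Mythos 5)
Your proof is correct and takes essentially the same route as the paper's: the paper likewise argues by contradiction that the second horizontal edge after $q_0$ would be a $y$-edge, observes that the prefix of $p$ up to that edge (namely $q_0 t^{k_{2j+2}}_i y t^{k_{2j+3}}_i$, with $q_0$ having balanced $y$-exponents) lies outside $\Ker{\phi}$, and then invokes Claim \ref{vraie claim 2} to produce the vertical short-cut in $\Gamma_c \setminus E_i$ contradicting the minimality of $p$ in ${\mathcal P}^{min}_{e,t_i}$, exactly as in Claim \ref{claim 2}. Your version is, if anything, slightly more explicit than the paper's compressed one-paragraph reduction, but the content is identical.
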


\begin{proof}
This amounts to proving that the second horizontal edge following $q_0$ is a $y^{-1}$-edge. The argument for proving this claim is exactly the same as for Claim \ref{claim 2}: if the second horizontal edge following $q_0$ in $p$ also were a $y$-edge then the vertical edge-path from its initial vertex $g$ to $g t^m_i$ is in $\Gamma_c \setminus E_i$: indeed the edge-path in $p$ from $e$ to $g$ reads  $t^{k_0}_i y t^{k_1}_i y^{-1} \cdots t^{k_{2j}}_i y t^{k_{2j+1}}_i y^{-1} t^{k_{2j+2}} y$ and so is not in $\Ker{\phi}$ (see Claim \ref{vraie claim 2}). The conclusion is as in Claim \ref{claim 2}.
\end{proof}

\begin{claim}
\label{claim 4}
Assume that the first horizontal edge in $p$ is a $y$-edge. Then $p$ is the reduced concatenation of edge-paths reading words of the form $t^{k_{2j}}_i y t^{k_{2j+1}}_i y^{-1}$ in $\Gamma_c \setminus E_i$.
\end{claim}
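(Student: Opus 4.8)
The plan is to prove Claim~\ref{claim 4} by induction on the number of horizontal edges of $p$, peeling off one block $t^{k_{2j}}_i y t^{k_{2j+1}}_i y^{-1}$ at a time. First I would record that this number is finite and even: by Claim~\ref{claim preliminaire} there is at least one horizontal edge, and since $\phi(e)=\phi(t_i)=0$ while a $y$-edge (resp.~$y^{-1}$-edge) changes $\phi$ by $+1$ (resp.~$-1$) and a $t^{\pm1}_i$-edge leaves $\phi$ unchanged, $p$ crosses exactly as many $y$-edges as $y^{-1}$-edges. The base case is furnished directly by Claim~\ref{claim 2}: since the first horizontal edge is a $y$-edge, $p$ begins with a block $t^{k_0}_i y t^{k_1}_i y^{-1}$.

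For the inductive step I would assume that $p$ begins with a concatenation $q_0$ of blocks $t^{k_0}_i y t^{k_1}_i y^{-1}\cdots t^{k_{2j}}_i y t^{k_{2j+1}}_i y^{-1}$ and that horizontal edges remain. If the first horizontal edge following $q_0$ is a $y$-edge, then Claim~\ref{claim 3} immediately produces the next block $t^{k_{2j+2}}_i y t^{k_{2j+3}}_i y^{-1}$, and the induction advances by two horizontal edges. Iterating, once no horizontal edge is left the blocks exhaust $p$ (a possible final run of $t_i$-edges being folded into the description), which is the desired decomposition.

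The crux --- and the step I expect to be the main obstacle --- is therefore to show that the horizontal edges genuinely alternate $y,y^{-1},y,y^{-1},\dots$, i.e.~that the first horizontal edge following \emph{each} completed block is again a $y$-edge. This does \emph{not} follow from Claims~\ref{claim 2} and~\ref{claim 3} alone: those only forbid two consecutive $y$-edges, whereas a priori the pattern could read $y,y^{-1},y^{-1},y,\dots$. What must be excluded is a $y^{-1}$-edge directly after a block, i.e.~a ``downward excursion'' taking $\phi$ from $0$ to $-1$. I would exclude it by a minimality argument parallel to the proofs of Claims~\ref{claim 2} and~\ref{claim 3}: since $e$ and $t_i$ lie on the same side of every horizontal wall (Claim~\ref{vraie claim 3}), Claim~\ref{jpo} forces such an excursion to cross its wall back, and because the edges of $E_i$ all lie in $\Ker{\phi}$ (Claim~\ref{vraie claim 2}), the vertical segments met at level $\phi=-1$ avoid $E_i$, so minimality would collapse the excursion to a bare negative block $y^{-1}t^{k}_i y$.

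The genuine difficulty is that a completed block returns $p$ to level $\phi=0$, which is exactly where $E_i$ sits, so the vertical shortcut used in Claims~\ref{claim 2} and~\ref{claim 3} --- which relied on the relevant vertices lying \emph{outside} $\Ker{\phi}$ --- is no longer automatically available to eliminate such a negative block outright. Here I would exploit the $H_i$-structure: by Remark~\ref{une remarque} each block $y t^{l}_i y^{-1}$, and likewise each putative negative block $y^{-1}t^{k}_i y$, is an element of $H_i$, and $E_i$ is left $H_i$-invariant, so left-translating the offending subpath by the appropriate element of $H_i$ maps $\Gamma_c\setminus E_i$ to itself and carries the configuration to one already treated, contradicting the minimality of $p$. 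Closing this last point rigorously is the hard part; once it is in place the alternation holds and the induction above yields Claim~\ref{claim 4}.
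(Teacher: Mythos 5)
Your reduction is exactly the paper's: Claim \ref{claim 2} is the base case, Claim \ref{claim 3} is the inductive step, and the entire remaining content of Claim \ref{claim 4} is to exclude a $y^{-1}$-edge immediately after a completed block. You identify this crux correctly, and you assemble the right ingredients (Claims \ref{vraie claim 3}, \ref{jpo}, \ref{vraie claim 2}, minimality of $p$, left $H_i$-invariance of $E_i$). But at the decisive moment you write that left-translating ``the offending subpath by the appropriate element of $H_i$ \dots carries the configuration to one already treated'', and you concede that closing this rigorously is the hard part. That step \emph{is} the proof; nothing beyond Claims \ref{claim 2} and \ref{claim 3} has been established without it. Moreover, the mechanism you gesture at fails as stated: if you left-translate the tail of $p$ (everything after the offending block) by $h^{-1}$ with $h \in H_i$, you indeed stay inside $\Gamma_c \setminus E_i$, but the terminal vertex moves from $t_i$ to $h^{-1}t_i \neq t_i$, so the translated path is not a competitor in ${\mathcal P}_{e,t_i}$ and no contradiction with $p \in {\mathcal P}^{min}_{e,t_i}$ can be drawn from it. This endpoint displacement is precisely the ``genuine difficulty'' you name, and your sketch does not overcome it.

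The paper's proof gets around it by a construction that discards \emph{both} the prefix and the tail instead of translating the tail. Write $p = q_0 q_1 q_2$, where $q_0 = t_i^{k_0} y t_i^{k_1} y^{-1}$, where $q_1$ is the excursion that (by Claim \ref{jpo}) must return to the vertical orbit of the terminal vertex $g$ of $q_0$ at some $g t_i^n$, and where $q_2$ begins with the crossing-back $y$-edge at $g t_i^n$. First, minimality forces an edge of $E_i^+$ to lie on the vertical segment from $g$ to $g t_i^n$: otherwise that segment is a vertical shortcut replacing $q_1$ and removing at least two horizontal edges. Second --- the key algebraic point --- $q_0$ reads $h t_i^{k_0}$ with $h = (y x_i y^{-1} t_i)^{k_1} \in H_i$, so left-translating the excursion $q_1$ \emph{alone} by $h^{-1}$ yields a path in $\Gamma_c \setminus E_i$ from $t_i^{k_0}$ to $t_i^{k_0+n}$ whose endpoints straddle the edge $(e,t_i)$, i.e.\ $k_0 \leq 0 < 1 \leq k_0 + n$. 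One then reconnects $t_i^{k_0}$ to $e$ and $t_i^{k_0+n}$ to $t_i$ by vertical segments that avoid $E_i$, producing a path in ${\mathcal P}_{e,t_i}$ whose horizontal edges are only those of $q_1$ --- strictly fewer than in $p$, since $q_0$ and $q_2$ contribute at least three. This competitor path, not a translate of $p$ into ``a configuration already treated'', is what contradicts minimality. (Your preliminary step of collapsing the excursion to a bare block $y^{-1} t_i^k y$ is sound but plays no role in the paper's argument, which handles the excursion as is.) In short: your proposal reproduces the paper's skeleton but is missing its central construction, and the replacement you suggest for it would not work.
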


\begin{proof}
By Claim \ref{claim 2}, $p$ admits a non-trivial initial subpath reading $t^{k_0}_i y t^{k_1}_i y^{-1}$. By Claim \ref{claim 3} it suffices to prove that the horizontal edge in $p$ following this initial subpath is not a $y^{-1}$-edge. Assume that it is, i.e.~$p = t^{k_0}_i y t^{k_1}_i y^{-1} t^{k_{2}}_i y^{-1} \cdots$. By Claim \ref{jpo} the edge-path $q_1 = t^{k_{2}}_i y^{-1} \cdots$ following $q_0 = t^{k_0}_i y t^{k_1}_i y^{-1} $ in $p$ has to go back to some $g t^n_i$, if $g$ is the terminal vertex of $q_0$ (hence the initial vertex of a $y$-edge). Since $p \in {\mathcal P}^{min}_{e,t_i}$ there is an edge in $E^+_i$ between $g$ and $g t^n_i$. Since $t^{k_m}_i y t^{k_{m+1}}_i y^{-1} = y x^{k_{m+1}}_i y^{-1} t^{k_{m+1}}_i t^{k_m}_i \in H_i t^{k_m}_i$ the edge-path $q_0$ reads an element of the form $h t^n_i$ with $h \in H_i$. By left-translation by $h^{-1}$ we pull-back $q_1$ to an edge-path starting at $e$ and ending at some $t^l_i$ with $l > 0$ since there is an edge in $E^+_i$ between the initial and terminal vertex. Since there are at least two horizontal edges in $q_0$ we so get an edge-path in ${\mathcal P}_{e,t_i}$ from $e$ to $t_i$ which has less horizontal edges than $p$. This contradicts $p \in {\mathcal P}^{min}_{e,t_i}$.
\end{proof}

\begin{claim}
\label{claim 5}
Assume that $p$  is the reduced concatenation of edge-paths reading words of the form $t^{k_{2j}}_i y t^{k_{2j+1}}_i y^{-1}$ in $\Gamma_c \setminus E_i$ for $j$ from $0$ to $l$. Then $\displaystyle \sum^l_{j=0} k_{2j} \leq 0$.
\end{claim}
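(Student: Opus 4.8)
The plan is to prove the inequality by tracking, along $p$, only the horizontal level $\phi$ of Claim \ref{vraie claim 2} together with the way the $t_i$-runs at level $\phi=0$ are forced to avoid $E_i$. By Claim \ref{claim 4} I write $p$ as the concatenation of the blocks $B_j$ reading $t^{k_{2j}}_i y t^{k_{2j+1}}_i y^{-1}$, $j=0,\dots,l$, and I let $v_j$ be the initial vertex of $B_j$ (so $v_0=e$). Since $\phi$ increases by $1$ on a $y$-edge and decreases by $1$ on a $y^{-1}$-edge, each block starts and ends at level $\phi=0$, the even sub-run $t^{k_{2j}}_i$ is traversed at level $\phi=0$, and the odd sub-run $t^{k_{2j+1}}_i$ at level $\phi=1$. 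By Claim \ref{vraie claim 2} every edge of $E_i$ issues from a vertex in $\Ker{\phi}$, so no $t_i$-edge met along an odd sub-run (which lies at level $\phi=1$) can belong to $E_i$; only the even sub-runs $t^{k_{2j}}_i$ are constrained. Using $t^a_i\,yx^b_iy^{-1}=c^b t^{a-b}_i$ with $c:=yt_iy^{-1}\in H_i$ (a consequence of Remark \ref{une remarque}, since $yx_iy^{-1}=ct^{-1}_i$ commutes with $t_i$), one computes $v_j=c^{\tau_j}t^{\Sigma_j-\tau_j}_i$, where $\Sigma_j:=\sum_{m<j}(k_{2m}+k_{2m+1})$ and $\tau_j:=\sum_{m<j}k_{2m+1}$.

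The main obstacle is the following group-theoretic fact, which I would isolate as a lemma: $H_i\cap\langle t_i\rangle=\{e\}$. No homomorphism of $G$ to an abelian group can detect this, because $t_i$ and $H_i$ have the same image both in $G^{\mathrm{ab}}$ and in the free quotient $\F{2}=\langle t_1,t_2\rangle$, so a genuinely non-abelian argument is needed. I would pass to the quotient $\eta\colon G\twoheadrightarrow K$ killing $x_{i+1}$ and $t_{i+1}$, where $K=\langle x_i,y,t_i\mid [x_i,t_i],\,t^{-1}_iyt_i=yx_i\rangle\cong\langle x_i,y\rangle\rtimes\langle t_i\rangle$. Since $\eta(t_i)=t_i$ and $\eta(H_i)=\langle yt_iy^{-1},y^{-1}t_iy\rangle$, it suffices to show $t^n_i\notin\langle yt_iy^{-1},y^{-1}t_iy\rangle$ for $n\neq0$. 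In the normal form of $K$ these two generators are $(yx_iy^{-1})t_i$ and $x^{-1}_it_i$, so the subgroup equals $\langle u,d\rangle$ with $u=yx_iy^{-1}x_i\in\langle x_i,y\rangle$ and $d=x^{-1}_it_i$; as $x_i$ and $d$ commute and $t_i=x_id$, one has $t^n_i\in\langle u,d\rangle$ iff $x^n_i$ lies in the normal closure $\langle d^{-k}ud^k\mid k\in\mz\rangle$ of $u$. A direct computation gives $d^{-k}ud^k=\xi^k_0\xi_1\xi^{1-k}_0$ with $\xi_m:=y^mx_iy^{-m}$; in the abelianization of the free group $\Ker{\beta}$ (where $\beta$ is the $y$-exponent on $\langle x_i,y\rangle$, which is free on $\{\xi_m\}$) every such element maps to $[\xi_0]+[\xi_1]$, whereas $x^n_i=\xi^n_0$ maps to $n[\xi_0]$. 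Hence $x^n_i$ lies in that subgroup only for $n=0$, proving the fact.

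Granting $H_i\cap\langle t_i\rangle=\{e\}$, the vertex $v_jt^m_i=c^{\tau_j}t^{\Sigma_j-\tau_j+m}_i$ lies in $H_i$ for exactly one value $m=m^\ast_j:=\tau_j-\Sigma_j$; thus the unique $t_i$-edge of $E_i$ in the level-$0$ line through $v_j$ is the one joining positions $m^\ast_j$ and $m^\ast_j+1$. From the defining sums one reads off the recursion $m^\ast_{j+1}=m^\ast_j-k_{2j}$ with $m^\ast_0=0$, so that $m^\ast_{l+1}=-\sum^l_{j=0}k_{2j}$. The even sub-run $t^{k_{2j}}_i$ goes from position $0$ to position $k_{2j}=m^\ast_j-m^\ast_{j+1}$ and, being an edge-path in $\Gamma_c\setminus E_i$, must not cross the forbidden edge; equivalently $0$ and $k_{2j}$ lie on the same side of the cut between $m^\ast_j$ and $m^\ast_j+1$, which a short case check shows to be equivalent to ``$m^\ast_j$ and $m^\ast_{j+1}$ are both $\ge0$ or both $\le -1$''.

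Propagating this dichotomy from $m^\ast_0=0\ge0$ through $j=0,\dots,l$ forces $m^\ast_{l+1}\ge0$, i.e.\ $\sum^l_{j=0}k_{2j}=-m^\ast_{l+1}\le0$, which is the assertion. The only delicate ingredient is the isolated lemma $H_i\cap\langle t_i\rangle=\{e\}$; the rest is bookkeeping in $K$ using the level function $\phi$, Claim \ref{vraie claim 2} and Remark \ref{une remarque}.
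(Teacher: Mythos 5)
Your proof is correct, but it takes a more roundabout route than the paper's, and its self-declared ``main obstacle'' is in fact superfluous. The paper proves the claim by an induction that left-translates the tail of $p$ by $(y t^{k_{2j+1}}_i y^{-1})^{-1} \in H_i$ (legitimate because $E_i$ is left $H_i$-invariant and $y t^{k}_i y^{-1}$ commutes with $t_i$), thereby concatenating all the even runs into a single vertical run issued from $e$; since $(e,t_i) \in E_i$, each partial sum $k_0 + k_2 + \cdots + k_{2j}$ must then be $\leq 0$. Your bookkeeping with $m^\ast_j = -\sum_{m<j} k_{2m}$ is precisely the same propagation, written in coordinates along the successive level-$0$ lines instead of being pulled back to the line through $e$: your dichotomy ``both $\geq 0$ or both $\leq -1$'', started at $m^\ast_0 = 0$, reproduces the paper's chain of inequalities. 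The genuine novelty in your write-up is the lemma $H_i \cap \langle t_i \rangle = \{e\}$, proved via the quotient $K \cong \langle x_i,y\rangle \rtimes \langle t_i\rangle$ and the abelianization of the kernel of the $y$-exponent; that argument is correct (and you are right that no abelian quotient of $G$ could detect it), but it is not needed here. Your derivation of the constraint on the even run only uses that the edge joining positions $m^\ast_j$ and $m^\ast_j+1$ on the $j$-th line \emph{does} lie in $E_i$, which is immediate from $c^{\tau_j} = (y t_i y^{-1})^{\tau_j} \in H_i$; the uniqueness of that edge -- the only thing your lemma buys -- is never invoked, because you only need the necessary condition that $p$ avoids this one edge, not a description of all edges of $E_i$ on that line. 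Deleting the lemma and keeping the rest of your argument yields, in substance, the paper's proof.
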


\begin{proof}
We proceed by induction on $l$. For $l=0$ we have $p=t^{k_0}_i y t^{k_1}_i y^{-1}$. Since $p$ starts at $e$ and $(e,t_i) \in E_i$ we necessarily have $k_0 \leq 0$. Let us assume that the claim holds at $l$ and let us prove that then it holds at $l+1$. We observe that for any non-negative integer $j$ the element $y t^{k_{2j+1}}_i y^{-1}$ is in $H_i$. Thus a left-translate by (the inverse of) such an element of an edge-path $q$ is in $\Gamma_c \setminus E_i$ if and only if $q$ already was. We first left-translate the edge-path reading $t^{k_2}_i y \cdots$ and starting at $t^{k_0}_i y t^{k_1}_i y^{-1}$ by $(y t^{k_{1}}_i y^{-1})^{-1}$: we get an edge-path starting at $e$, reading
$t^{k_0}_i t^{k_2}_i y t^{k_3}_i y^{-1} \cdots$ and lying in $\Gamma_c \setminus E_i$ since $p$ is in $\Gamma_c \setminus E_i$. Since $(e,t_i) \in E_i$ this implies $k_0 + k_2 \leq 0$. We continue the process by left-translating by
$(y t^{k_3}_i y^{-1})^{-1}$ the subpath of $p$ starting with $t^{k_4}_i y$, and more generally by $(y t^{k_{2j+1}}_i y^{-1})^{-1}$ the subpath of $p$ starting with $t^{k_{2j+2}}_i y$. We eventually get $k_0 + k_2 + \cdots + k_{2l} \leq 0$ and the claim is proved. 
\end{proof}

\begin{claim}
\label{claim 6}
Let $\displaystyle g = \prod^{l}_{j=0} t^{k_{2j}}_i y t^{k_{2j+1}}_i y^{-1}$ be an element in $G$. Let $\phi_i \colon G \rightarrow \mz$ be the map which to an element $g$ assigns the sum of the exponents of the letters $x_i$ appearing in the unique reduced representative of $g$ of the form $wt$ where $w$ is a reduced horizontal word and $t$ is a reduced vertical one (beware that $\phi_i$ is not a morphism since its values on certain relators is non-zero). Then $\displaystyle \phi_i(g) = \sum^l_{j=0} k_{2j+1}$. 
\end{claim}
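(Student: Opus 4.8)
The plan is to bring the given product into the unique semidirect-product form $g=wt$ with $w\in\F{3}$ horizontal and $t\in\F{2}$ vertical, by transporting every vertical letter to the right, and then simply to count the occurrences of $x_i$ in $w$. Starting from the defining relations $\sigma(t_i)(y)=yx_i$, $\sigma(t_i)(x_j)=x_j$ and the relation $ut=t\,\sigma(t)(u)$ recalled in the proof of Lemma \ref{stabilisateurs1}, I would first record the elementary commutation identities, valid for every $k\in\mz$,
$$t_i^{k}\,y = y\,x_i^{-k}\,t_i^{k},\qquad t_i^{k}\,y^{-1} = x_i^{k}\,y^{-1}\,t_i^{k},\qquad t_i^{k}\,x_j = x_j\,t_i^{k},$$
the first being $\sigma(t_i)^{-k}(y)=yx_i^{-k}$ rewritten and the others being immediate.

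Applying these identities to a single factor gives the block computation
$$t_i^{k_{2j}}\,y\,t_i^{k_{2j+1}}\,y^{-1} = y\,x_i^{-k_{2j}}\,t_i^{k_{2j}+k_{2j+1}}\,y^{-1} = \bigl(y\,x_i^{k_{2j+1}}\,y^{-1}\bigr)\,t_i^{k_{2j}+k_{2j+1}},$$
so each factor splits as a horizontal element $h_j:=y\,x_i^{k_{2j+1}}\,y^{-1}$ followed by the vertical element $t_i^{k_{2j}+k_{2j+1}}$. The observation that makes the whole argument work is that $\sigma(t_i)\bigl(y\,x_i^{k}\,y^{-1}\bigr)=(yx_i)\,x_i^{k}\,(x_i^{-1}y^{-1})=y\,x_i^{k}\,y^{-1}$, i.e.\ each $h_j$ is fixed by conjugation by every power of $t_i$. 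Consequently, when I commute the accumulated vertical letters (which are all powers of $t_i$) rightwards past the later horizontal factors $h_{j+1},\dots,h_l$, none of these factors is altered. Collecting the pieces yields
$$g = w\,t,\qquad w=\prod_{j=0}^{l} y\,x_i^{k_{2j+1}}\,y^{-1}\in\F{3},\qquad t=t_i^{\sum_{j=0}^{l}(k_{2j}+k_{2j+1})}\in\F{2},$$
and by uniqueness of the horizontal--vertical decomposition this is precisely the representative $wt$ entering the definition of $\phi_i$.

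It then remains to conclude that $\phi_i(g)=\sum_{j=0}^{l}k_{2j+1}$. The word $w$ above involves only the letters $y^{\pm1}$ and $x_i^{\pm1}$, with total $x_i$-exponent $\sum_{j=0}^{l}k_{2j+1}$; since the exponent sum of a fixed generator is a coordinate of the abelianization homomorphism $\F{3}\to\mz^3$, it is unchanged by the free reductions $x_i^{\pm1}x_i^{\mp1}$ and $y^{\pm1}y^{\mp1}$ needed to reduce $w$. Hence the $x_i$-exponent of the reduced representative equals that of the unreduced word, namely $\sum_{j=0}^{l}k_{2j+1}$, as claimed. The one point requiring genuine care is the fixed-point identity $\sigma(t_i)(y\,x_i^{k}\,y^{-1})=y\,x_i^{k}\,y^{-1}$: it is exactly what guarantees that moving the vertical letters to the right neither creates nor destroys any $x_i$, and it explains why $\phi_i$ records the exponents $k_{2j+1}$ while ignoring the $k_{2j}$.
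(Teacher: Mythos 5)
Your proof is correct, and while it rests on the same computational core as the paper's (the relations $t_i y = y x_i^{-1} t_i$ and $t_i y^{-1} = x_i y^{-1} t_i$ coming from the semidirect-product structure), it is organized along a genuinely different route. The paper proceeds by induction on $l$, proving the explicit closed form $g = y x_i^{k_1+k_3+\cdots+k_{2l+1}} y^{-1}\, t_i^{k_0+k_1+\cdots+k_{2l+1}}$ by pushing the accumulated vertical power through each new block letter by letter, and then reads $\phi_i(g)$ off this reduced word. Your block computation $t_i^{k_{2j}} y\, t_i^{k_{2j+1}} y^{-1} = \bigl(y x_i^{k_{2j+1}} y^{-1}\bigr) t_i^{k_{2j}+k_{2j+1}}$ is exactly the paper's base case, but where the paper iterates, you isolate the structural fact that each $h_j = y x_i^{k_{2j+1}} y^{-1}$ is fixed by $\sigma(t_i)$ and hence commutes with every power of $t_i$; this collects all vertical letters on the right in a single step, with no induction at all. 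You also finish differently: instead of exhibiting the reduced horizontal word, you observe that the $x_i$-exponent sum is an abelianization invariant of $\F{3}$, so free reduction of $w=\prod_j h_j$ cannot change it. Each approach buys something: the paper's induction hands you the reduced normal form explicitly (so no separate argument about reduction is needed, and the form is available for reuse, e.g.\ in Claim \ref{claim 8}), while your version is shorter, makes it conceptually transparent why the exponents $k_{2j}$ vanish from the count (they are absorbed precisely because the $h_j$ are $\sigma(t_i)$-invariant), and your abelianization remark cleanly handles the fact that $\prod_j h_j$ need not be a reduced word --- a point the paper's explicit computation sidesteps rather than addresses.
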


\begin{proof}
We prove by induction on $l$ that when writing $g \in G$ with the generating set $S$ we have $$g= y x^{k_1+k_3+\cdots+k_{2l+1}}_i y^{-1} t^{k_0+k_1+\cdots+k_{2l}+k_{2l+1}}_i.$$ If $l=0$ then $g = t^{k_0}_i y t^{k_1}_i y^{-1} = y x^{-k_0}_i t^{k_0+k_1}_i  y^{-1} =y x^{-k_0} x^{k_0+k_1}_i y^{-1} t^{k_0+k_1}_i= y x^{k_1}_i y^{-1} t^{k_0+k_1}_i$. So the assertion holds for $l=0$. Assume that it holds for $l$. Then if $\displaystyle g = \prod^{l+1}_{j=0} t^{k_{2j}}_i y t^{k_{2j+1}}_i y^{-1} = (\prod^{l}_{j=0} t^{k_{2j}}_i y t^{k_{2j+1}}_i y^{-1}) (t^{k_{2l+2}}_i y t^{2k_{2l+3}}_i y^{-1})$ by induction hypothesis we get $$g =  (y x^{k_1+k_3+\cdots+k_{2l+1}}_i y^{-1} t^{k_0+k_1+\cdots+k_{2l+1}}_i) (t^{k_{2l+2}}_i y t^{k_{2l+3}}_i y^{-1})$$ hence $$g= y x^{k_1+k_3+\cdots+k_{2l+1}+\cdots}_i y^{-1} y x^{-k_0-k_1-\cdots-k_{2l+2}}_i t^{k_0+k_1+\cdots+k_{2l+1}+k_{2l+2}+k_{2l+3}}_i y^{-1}$$ by permuting $t^{k_0+k_1+\cdots+k_{2l+1}+k_{2l+2}}_i$ with $y$ using the relation $t_iy= y x^{-1}_i t_i$ (notice that the exponent of $x_i$ is the opposite of the exponent of $t_i$). This is easier rewritten as $$g=y x^{-k_0-k_2-\cdots-k_{2l+2}}_i t^{k_0+k_1+\cdots+k_{2l+1}+k_{2l+2}+k_{2l+3}}_i y^{-1}$$ which gives $$g=y x^{-k_0-k_2-\cdots-k_{2l+2}}_i x^{k_0+k_1+\cdots+k_{2l+1}+k_{2l+2}+k_{2l+3}}_i y^{-1} t^{k_0+k_1+\cdots+k_{2l+1}+k_{2l+2}+k_{2l+3}}_i$$ by permuting $t^{k_0+k_1+\cdots+k_{2l+1}+k_{2l+2}+k_{2l+3}}_i$ with $y^{-1}$ using the relation $t_i y^{-1} = x_i y^{-1} t_i$ (notice that the exponent of $x_i$ is equal to the exponent of $t_i$). This is easier rewritten as $$g = y x^{k_1+k_3+\cdots+k_{2l+1}+k_{2l+3}}_i y^{-1} t^{k_0+k_1+\cdots+k_{2l+1}+k_{2l+2}+k_{2l+3}}_i$$ and the induction is complete. Since $\phi_i(g)$ is equal to the sum of the exponents of the $x_i$ in the previous writing, we get the claim.  
\end{proof}

\begin{claim}
\label{claim 7}
The first horizontal edge in $p$ is not a $y$-edge.
\end{claim}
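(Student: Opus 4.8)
The plan is to argue by contradiction: assume the first horizontal edge in $p$ is a $y$-edge, and then exhibit an incompatibility between the exponent-counting formula of Claim \ref{claim 6} and the inequality of Claim \ref{claim 5}. Since all the structural work has already been done, this last claim should follow by a short arithmetic comparison.

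First I would invoke Claim \ref{claim 4}: under the standing assumption, $p$ is the reduced concatenation of subpaths reading words $t^{k_{2j}}_i y t^{k_{2j+1}}_i y^{-1}$ for $j$ from $0$ to some $l$. Because $p$ runs from $e$ to $t_i$ inside $\Gamma_c \setminus E_i$, the group element it spells equals $t_i$; writing $g = \prod^{l}_{j=0} t^{k_{2j}}_i y t^{k_{2j+1}}_i y^{-1}$, we therefore have $g = t_i$ in $G$. Next I would feed $g$ into the normal form produced in the proof of Claim \ref{claim 6}, namely $g = y x^{k_1+k_3+\cdots+k_{2l+1}}_i y^{-1} t^{k_0+k_1+\cdots+k_{2l+1}}_i$, which is already a reduced representative of the form $wt$ with $w$ horizontal and $t$ vertical.

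Comparing this expression with $g = t_i$ and using the uniqueness of the $wt$ normal form forces two scalar constraints. The horizontal factor must be trivial, so $\phi_i(g) = \sum^{l}_{j=0} k_{2j+1} = 0$; and the vertical factor must be exactly $t_i$, so $\sum^{l}_{j=0}(k_{2j}+k_{2j+1}) = 1$. Subtracting the first identity from the second yields $\sum^{l}_{j=0} k_{2j} = 1$. But Claim \ref{claim 5} applies precisely because $p$ has the block form just recorded, and it asserts $\sum^{l}_{j=0} k_{2j} \leq 0$. These two statements are contradictory, so the first horizontal edge of $p$ cannot be a $y$-edge.

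I expect no genuine obstacle here, as every ingredient is already available; the only care needed is bookkeeping. Concretely, one must confirm that the hypotheses of Claims \ref{claim 4}, \ref{claim 5} and \ref{claim 6} are all met by the minimal path $p$ (this is exactly the content of the assumption that the first horizontal edge is a $y$-edge), and one must read off the two constraints $\phi_i(g)=0$ and the vertical exponent $=1$ correctly from the normal form rather than conflating the odd-indexed and even-indexed partial sums. Once those are pinned down, the contradiction $1 = \sum_{j=0}^l k_{2j} \leq 0$ is immediate.
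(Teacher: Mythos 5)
Your proof is correct and follows essentially the same route as the paper: both invoke Claims \ref{claim 4}, \ref{claim 5} and \ref{claim 6} together with the fact that $p$ spells the element $t_i$, and derive a contradiction in the exponent sums. The only difference is bookkeeping order --- the paper deduces $\sum_{j=0}^l k_{2j+1} > 0$ from the vertical-exponent equation and Claim \ref{claim 5} and then contradicts $\phi_i(t_i)=0$, while you read off $\sum_{j=0}^l k_{2j+1}=0$ and $\sum_{j=0}^l k_{2j}=1$ from the normal form and contradict Claim \ref{claim 5} directly; these are equivalent rearrangements of the same argument.
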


\begin{proof}
We argue by contradiction and assume that the first horizontal edge in $p$ is a $y$-edge. By Claim \ref{claim 4}, $p$ is the reduced concatenation of edge-paths reading words of the form $t^{k_{2j}}_i y t^{k_{2j+1}}_i y^{-1}$ in $\Gamma_c \setminus E_i$ for $j$ from $0$ to $l$ ($l \geq 0$). By Claim \ref{claim 5}, $\displaystyle \sum^l_{j=0} k_{2j} \leq 0$. Since the element defined by $p$ is $t_i$ and the exponent of $t_i$ in $p$ is $\displaystyle \sum^l_{j=0} k_{2j} + \displaystyle \sum^l_{j=0} k_{2j+1}$, we have $\displaystyle \sum^l_{j=0} k_{2j} + \displaystyle \sum^l_{j=0} k_{2j+1} = 1$. Hence  $\displaystyle \sum^l_{j=0} k_{2j+1} > 0$. Claim \ref{claim 6} then gives $\phi_i(t_i) > 0$, which is an absurdity since $\phi_i(t_i) = 0$, hence the claim.
\end{proof}

\begin{claim}
\label{claim 8}
If there is $p \subset {\mathcal P}^{min}_{e,t_i}$ admitting a $y^{-1}$-edge as first horizontal edge, then there is $q \subset {\mathcal P}^{min}_{e,t_i}$ admitting a $y$-edge as first horizontal edge.
\end{claim}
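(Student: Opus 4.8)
The plan is to mirror the analysis of Claims \ref{jpo}--\ref{claim 4} with the roles of $y$ and $y^{-1}$ interchanged, extract from a $y^{-1}$-first path the analogous block decomposition, and then replace each block by its $y$-version to produce the desired path $q$. So let $p \in {\mathcal P}^{min}_{e,t_i}$ have a $y^{-1}$-edge as its first horizontal edge.

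First I would record the $y^{-1}$-analogue of Claim \ref{jpo}: since a $y^{-1}$-edge $(g,gy^{-1})$ is the reverse of the $y$-edge $(gy^{-1},g)$, it still crosses a horizontal wall, so by Claims \ref{vraie claim 3} and \ref{portes ouvertes} the path $p$ must cross this wall back along a $y$-edge whose initial vertex lies in the right vertical orbit of $gy^{-1}$. With this in hand the verbatim mirror of Claims \ref{claim 2}, \ref{claim 3} and \ref{claim 4} goes through, because the key inputs are symmetric under $y \leftrightarrow y^{-1}$: an initial word $t^{k_0}_i y^{-1} t^{k_1}_i$ has $\phi$-value $-1 \neq 0$, so Claim \ref{vraie claim 2} still forbids $E_i$-edges inside its terminal vertical orbit and yields the shortcut; and the coset identity $t^{a}_i y^{-1} t^{b}_i y = x^{-b}_i t^{a+b}_i \in H_i t^{a}_i$ replaces the one used in Claim \ref{claim 4}. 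This shows that $p$ is a reduced concatenation of blocks $t^{k_{2j}}_i y^{-1} t^{k_{2j+1}}_i y$, $j = 0,\dots,l$, where reducedness of $p$ forces $k_{2j+1} \neq 0$ for all $j$ and $k_{2j} \neq 0$ for $j \geq 1$.

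Next I would define $q$ to be the edge-path reading $\prod_{j=0}^l t^{k_{2j}}_i y t^{k_{2j+1}}_i y^{-1}$, obtained by turning each $y^{-1}$-block of $p$ into the $y$-block with the same exponents. By construction $q$ starts with a $y$-edge, is reduced (nonvanishing of the $k_{2j+1}$ and of the intermediate $k_{2j}$ prevents any cancellation), and crosses exactly $2(l+1)$ horizontal edges, as does $p$. Moreover $q$ reads $y x^{\sum k_{2j+1}}_i y^{-1} t^{\sum(k_{2j}+k_{2j+1})}_i$ by the computation of Claim \ref{claim 6}; since $p$ reads $t_i$, the $y^{-1}$-block identity gives $\sum k_{2j+1} = 0$ and $\sum(k_{2j}+k_{2j+1}) = 1$, so $q$ reads $t_i$ as well. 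Hence $q \in {\mathcal P}_{e,t_i}$ the moment we know it lies in $\Gamma_c \setminus E_i$, and then $q \in {\mathcal P}^{min}_{e,t_i}$ by the horizontal-edge count, giving the claim.

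The main obstacle is therefore to check that $q$ avoids $E_i$. I expect to handle this by comparing the $t_i$-runs of $q$ with those of $p$ and exploiting that, by Claim \ref{vraie claim 2}, an $E_i$-edge can only be incident to a vertex of $\Ker{\phi}$. The ``inner'' runs $t^{k_{2j+1}}_i$ of both $p$ and $q$ sit at vertices of $\phi$-value $-1$, respectively $+1$, hence never meet $E_i$; only the ``outer'' runs $t^{k_{2j}}_i$, sitting at $P_j = x^{-a_j}_i t^{b_j}_i$ for $p$ and at $Q_j = y x^{a_j}_i y^{-1} t^{b_j}_i$ for $q$ (with $a_j = \sum_{j'<j} k_{2j'+1}$ and $b_j = \sum_{j'<j}(k_{2j'}+k_{2j'+1})$), can cross $E_i$. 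Using $x^{-1}_i t_i \in H_i$ and $y x^{c}_i y^{-1} t^{c}_i \in H_i$ from Remark \ref{une remarque}, both $P_j t^s_i$ and $Q_j t^s_i$ lie in $H_i$ if and only if $t^{b_j+s-a_j}_i$ does; since an $E_i$-crossing of such a run is equivalent to one of its vertices being incident to $H_i$, the path $q$ crosses $E_i$ exactly where $p$ does, that is nowhere. This is the delicate point, because the $y^{-1}$- and $y$-blocks are not themselves left $H_i$-translates of one another — they differ in their middle letters $y^{-1}$ versus $y$ — so the invariance must be read off run by run through the $\phi$- and $H_i$-bookkeeping rather than by a single translation.
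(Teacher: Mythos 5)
Your Steps 2--4 (flipping each block, and the run-by-run $H_i$-coset bookkeeping) are sound deductions as far as they go, but everything rests on Step 1, and that is where there is a genuine gap: the mirrors of Claims \ref{claim 2}--\ref{claim 4} do \emph{not} go through ``verbatim'', because those arguments are not symmetric under $y\leftrightarrow y^{-1}$. The asymmetry already sits in Claim \ref{jpo}, as your own restatement shows: for a $y$-edge $(g,gy)$ the forced return crossing is a $y^{-1}$-edge $(gt_i^ky,gt_i^k)$ whose \emph{terminal} vertex lies in the vertical orbit of $g$, whereas for a $y^{-1}$-edge $(g,gy^{-1})$ the return crossing is a $y$-edge whose \emph{initial} vertex lies in the orbit of $gy^{-1}$ and whose terminal vertex $gx_i^{-k}t_i^k$ lies in neither orbit. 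In Claims \ref{claim 2}--\ref{claim 4} the contradiction is always extracted from the fact that the path flanks both crossings of one horizontal wall, on the side of $e$ and $t_i$, by vertices of a \emph{single} vertical orbit; there the shortcut (or, in Claim \ref{claim 4}, the blocked-shortcut-plus-translation alternative) erases both horizontal crossings. After the swap the two flanking vertices are $g$ and $gx_i^{-k}t_i^k$, which do not share a vertical orbit, so the mechanism collapses. The collapse is fatal exactly at the mirror of Claim \ref{claim 4}, i.e. at excluding a $y$-edge occurring after a completed string of $y^{-1}$-blocks. Such blocks end at $v=x_i^{-A}t_i^{B}$ (with $A=\sum_j k_{2j+1}$, $B=\sum_j(k_{2j}+k_{2j+1})$), and: (i) the wall attached to the last edge of the block, $vy^{-1}(\mathcal Y,{\mathcal Y}^c)$, has $v$, $e$ and $t_i$ all on the same side (compute $(vy^{-1})^{-1}e=t_i^{-B}yx_i^{A-B}\in\mathcal Y$), so no crossing back is forced and the mirrored argument yields nothing; (ii) $\phi(v)=0$, so Claim \ref{vraie claim 2} gives no protection, and indeed the orbit of $v$ genuinely carries an $E_i$-edge, at $vt_i^{A-B}=(x_i^{-1}t_i)^{A}\in H_i$, so the shortcut you invoke may be blocked. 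You supply no argument for this configuration --- which is precisely the configuration the paper's proof is about.

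There is also a warning sign you should have caught: your Step 1 and Step 3 jointly contradict a bound provable by your own toolkit. From the endpoint you correctly get $\sum_j k_{2j+1}=0$ and $\sum_j(k_{2j}+k_{2j+1})=1$, hence $\sum_j k_{2j}=1$; but the translation trick of Claim \ref{claim 5}, mirrored using $y^{-1}t_i^{k}y\in H_i$ (Remark \ref{une remarque}) --- and carried out explicitly inside the paper's own proof of the present claim --- gives $\sum_j k_{2j}\le 0$ for any concatenation of blocks $t_i^{k_{2j}}y^{-1}t_i^{k_{2j+1}}y$ starting at $e$ in $\Gamma_c\setminus E_i$. So a path decomposing as in your Step 1 and reading $t_i$ can never exist, and your $q$ is manufactured out of nothing. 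The paper avoids this trap by never asserting the full mirrored decomposition: it splits $p$ into a maximal initial string of $y^{-1}$-blocks followed by a \emph{non-trivial} tail $p'$ whose first horizontal edge is a $y$-edge, observes that the blocks composed with $t_i^{r}$ read an element $h=(x_i^{-1}t_i)^{s}\in H_i$, left-translates $t_i^{-r}p'$ by $h^{-1}$ so that it starts at $e$, and appends a correcting path reading $y^{-1}t_i^{\pm s}y$ to land at $t_i$. Handling that mixed configuration is the actual content of the paper's proof, and it has no counterpart in yours.
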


\begin{proof}
The arguments are similar to those exposed above for proving that $p$ does not begin with a $y$-edge. Assume that the first horizontal edge in $p$ is a $y^{-1}$-edge, i.e.~$p = t^{k_0}_i y^{-1} \cdots$ with $k_0 \leq 0$. If $\phi$ is the morphism given in Claim \ref{vraie claim 2}, $\phi(t^{k_0} y^{-1}) = -1$ so that, by this same claim \ref{vraie claim 2}, there is no edge in $E_i$ between two vertices in the orbit of the terminal vertex of this $y^{-1}$-edge under the vertical subgroup. By Claim \ref{jpo}, $p$ has to cross back the associated horizontal wall. Moreover the number of horizontal edges in $p$ is minimal. Therefore this $y^{-1}$-edge is followed by an edge-path of the form $t^{k_1}_i y$ in $\Gamma_c \setminus E_i$, i.e.~$p = q_0 q_1$ with $q_0 = t^{k_0}_i y^{-1} t^{k_1}_i y$ and $q_1 \subset \Gamma_c \setminus E_i$. If the first horizontal edge in $q_1$ is also a $y^{-1}$-edge we repeat the argument.  Thus we eventually get a decomposition $p = t^{k_0}_i y^{-1} t^{k_1}_i y \cdots t^{k_{2m}}_i y^{-1} t^{k_{2m+1}}_i y p^\prime$ where:
 
 \begin{enumerate}
\item $p^\prime$ is non-trivial,
   \item the first horizontal edge in $p^\prime$ is a $y$-edge.
 \end{enumerate}
 
We noticed above that $k_0 \leq 0$. From Remark \ref{une remarque}, $y^{-1} t^{k_{2j+1}}_i y \in H_i$. Hence for any integer $j$ from $0$ to $m$ the left-translate of the subpath $t^{k_{2j+2}}_i y^{-1} \cdots$ by  $(y^{-1} t^{k_{2j+1}}_i y)^{-1}$ yields an edge-path in $\Gamma_c \setminus E_i$. We eventually get $\displaystyle \sum^m_{j=0} k_{2j} \leq 0$ (the same construction and argument have been exposed with more details in the proof of Claim \ref{claim 5}). In $G$ we have  $t^{k_0}_i y^{-1} t^{k_1}_i y \cdots t^{k_{2m}}_i y^{-1} t^{k_{2m+1}}_i y = x^{-k_1-k_3-\cdots-k_{2m+1}}_i  t^{k_0+k_1+\cdots+k_{2m}+k_{2m+1}}_i := g$ so that $\phi(g) \leq -\phi_i(x_i)$ ($\phi_i$ is the map from $G$  onto $\mz$ giving the exponent of $x_i$ - see Claim \ref{claim 6}). Hence the terminal vertex of any edge-path $t^{k_0}_i y^{-1} t^{k_1}_i y \cdots t^{k_{2m}}_i y^{-1} t^{k_{2m+1}}_i y$ starting at $e$ lies in the same side as $e$ in the grid $\langle x_i,t_i \rangle$.
 
  Thus there is $r \in \mn$ such that $t^{k_0}_i y^{-1} t^{k_1}_i y \cdots t^{k_m}_i y^{-1} t^{k_{m+1}}_i y t^r_i$ ends at some power of $x^{-1}_i t_i$, i.e.~as a group element defines a power $(x^{-1}_i t_i)^s$, $s \in \mz$. The left-translate of the edge-path $t^{k_0}_i y^{-1} t^{k_1}_i y \cdots t^{k_m}_i y^{-1} t^{k_{m+1}}_i y t^r_i (t^{-r}_i p^\prime)$ by $t^{-r}_i y^{-1} t^{-k_{m+1}}_i y t^{-k_{m}}_i y^{-1} \cdots y^{-1}t^{-k_1}yt^{-k_0}$ yields, after reduction, an edge-path in $\Gamma_c \setminus E_i$ from $e$ to $(x^{-1}_i t_i)^{-s} t_i$. Since $(x^{-1}_i t_i)^{-s} t_i = t_i (x^{-1}_i t_i)^{-s}$, by post-composing it with an edge-path reading $y^{-1} t^{-s}_i y$ if $s > 0$ and $y^{-1} t^s_i y$ if $s < 0$ we get an edge-path $q$ in $\Gamma_c \setminus E_i$ which belongs to ${\mathcal P}^{min}_{e,t_i}$ (it has at most the same number of horizontal edges as $p$), and the first horizontal edge of which is a $y$-edge since $p^\prime$ begins with a $y$-edge. 
  \end{proof}

If there exists an edge-path between $e$ and $t_i$ in $\Gamma_c \setminus E_i$ which goes only through horizontal and $t^{\pm 1}_i$-edges then there exists such an edge-path $p$ which is reduced and minimizes the number of horizontal edges that it crosses. By Claim \ref{claim preliminaire}, such an edge-path $p$ contains at least one horizontal edge. By Claim \ref{claim 7} the first horizontal edge in $p$ is not a $y$-edge. It follows by Claim \ref{claim 8}, that the first horizontal edge in $p$ neither is a $y^{-1}$-edge. We so eventually get that there exists no edge-path in $\Gamma_c \setminus E_i$ from $e$ to $t_i$ and Lemma \ref{enfin 1} is proved.
\end{proof}

\begin{lemma}  \hfill
\label{enfin 2}

If there exists an edge-path connecting $e$ to $t_i$ in $\Gamma_c \setminus E_i$ then there exists an edge-path composed only of horizontal edges and of $t^{\pm 1}_i$-edges connecting $e$ to $t_i$
in $\Gamma_c \setminus E_i$.
\end{lemma}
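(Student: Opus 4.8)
The plan is to argue by minimality. Since an edge-path from $e$ to $t_i$ in $\Gamma_c\setminus E_i$ exists by hypothesis, I would choose one, say $p$, that is reduced and crosses the least possible number of $t^{\pm 1}_{i+1}$-edges, and then prove that this number is zero. Write $K=\langle y,t_i\rangle$ and note that the $y^{\pm 1}$- and $t^{\pm 1}_i$-edges issued from a vertex $g$ stay in the coset $gK$, whereas a $t^{\pm 1}_{i+1}$-edge leaves it; in particular the initial vertex of the first $t_{i+1}$-edge of $p$ and the terminal vertex of its last one both lie in $K$. First I would record the only constraint that $\Gamma_c\setminus E_i$ imposes: by Definition \ref{definition enfin} every edge of $E_i$ is a $t^{\pm 1}_i$-edge whose initial vertex lies in $H_i$, so $y^{\pm 1}$- and $t^{\pm 1}_{i+1}$-edges never meet $E_i$ and only the $t^{\pm 1}_i$-edges of $p$ have to be watched.

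Next I would localize a single $t_{i+1}$-excursion. Composing with the projection $q\colon G\to\F{2}=\langle t_i,t_{i+1}\rangle$ onto the vertical subgroup (kernel the horizontal subgroup) sends $p$ to an edge-path of the Cayley graph of $\F{2}$ joining $e$ to $t_i$; as that graph is a tree, $q(p)$ reduces to the single edge $(e,t_i)$, so every $t_{i+1}$-edge of $p$ is eventually backtracked. Recording the distance to the geodesic $[e,t_i]$ in this tree, a deepest backtrack yields a subpath $t^{\delta}_{i+1}\,r\,t^{-\delta}_{i+1}$ ($\delta=\pm 1$) in which $r$ contains no $t^{\pm 1}_{i+1}$-edge; hence $r$ runs inside one $K$-coset and, since the pair must cancel under $q$, satisfies $\pi(r)\in K\cap\F{3}=\langle x_i,y\rangle$. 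Using the defining relations $t^{-1}_{i+1}yt_{i+1}=yx_{i+1}$ and $t^{-1}_{i+1}x_it_{i+1}=x_i$ (so that $x_{i+1}$ and $t_{i+1}$ are among the generators of $H_i$, see Remark \ref{une remarque}), the two ends of this subpath differ by $t^{\delta}_{i+1}\,\pi(r)\,t^{-\delta}_{i+1}=\sigma(t_{i+1})^{-\delta}(\pi(r))$, which is $\pi(r)$ with each occurrence of $y^{\pm 1}$ multiplied by a power of $x_{i+1}$.

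The decisive move is then to peel this pair of $t_{i+1}$-edges off by left-translating the portion of $p$ lying beyond the excursion by a suitable element of $H_i$: since $E_i$ is left $H_i$-invariant, such a translation keeps the path inside $\Gamma_c\setminus E_i$, exactly as the translations by $y\,t^{k}_i\,y^{-1}\in H_i$ were used in the proof of Lemma \ref{enfin 1}. Here the main obstacle appears, and it is the reason the statement is global rather than local: the element $\sigma(t_{i+1})^{-\delta}(\pi(r))$ genuinely involves $x_{i+1}$, and any edge-path joining $g$ to $g\,\sigma(t_{i+1})^{-\delta}(\pi(r))$ must cross an \emph{even and nonzero} number of $t_{i+1}$-edges as soon as $\pi(r)$ contains $y$ (the image of the endpoints in $\F{2}/\langle\langle t_i\rangle\rangle\cong\mz$ forces the $t_{i+1}$-exponent to vanish, while the $x_{i+1}$ forbids zero). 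Thus a single excursion cannot be deleted in isolation; one has to peel the excursions in a fixed outermost-to-innermost order, accumulating the successive $x_{i+1}$-corrections into one element of $H_i=\langle x_{i+1},t_{i+1},yx_iy^{-1}t_i,x_it^{-1}_i\rangle$, and use that the two endpoints $e$ and $t_i$ carry no $x_{i+1}$ to guarantee that these corrections telescope, so that the overall $H_i$-translation lands back in $K$.

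Throughout the rewriting the only point to verify is that no newly created $t_i$-edge falls into $E_i$, i.e.\ that its initial vertex avoids $H_i$; I would control this precisely as in Lemma \ref{enfin 1}, via the morphism $\phi\colon G\to\mz$ with $\phi(y)=1$ and $\phi(t_j)=0$. By Claim \ref{vraie claim 2} an edge of $E_i$ can only occur inside a vertical orbit contained in $\Ker{\phi}$, so tracking $\phi$ along the modified subpaths certifies that the $t_i$-edges produced stay outside $E_i$. Each peeling strictly lowers the number of $t^{\pm 1}_{i+1}$-edges, so the induction terminates with a reduced edge-path from $e$ to $t_i$ in $\Gamma_c\setminus E_i$ using only $y^{\pm 1}$- and $t^{\pm 1}_i$-edges, which is the assertion of the lemma. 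I expect the bookkeeping of the accumulated $H_i$-translation, together with the repeated verification (through $\phi$) that it never pushes a $t_i$-edge into $E_i$, to be the technical heart of the argument; the parity obstruction above is exactly what prevents an edge-by-edge simplification and makes the global telescoping necessary.
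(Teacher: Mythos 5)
You have located the right difficulty, but you have not solved it, and the step you leave open is exactly the decisive one. Your plan is to delete an innermost excursion $t_{i+1}^{\delta}\,r\,t_{i+1}^{-\delta}$ and re-attach the rest of the path; as you note, the re-attachment is a left-translation by a \emph{conjugate} $g\bigl(t_{i+1}^{\delta}\pi(r)t_{i+1}^{-\delta}\bigr)^{-1}g^{-1}$, and only left-translations by elements of $H_i$ are guaranteed to preserve $\Gamma_c\setminus E_i$; such a conjugate (e.g. $yt_{i+1}^{-1}y^{-1}$) need not lie in $H_i$. Your proposed escape --- peel the excursions ``outermost-to-innermost, accumulating the successive $x_{i+1}$-corrections into one element of $H_i$'' and argue that, since $e$ and $t_i$ carry no $x_{i+1}$, the corrections ``telescope'' --- is a hope, not an argument: you never exhibit the accumulated element, never prove it lies in $H_i$ at each stage (this is the very same obstruction you identified, reappearing globally), and never prove that the process ends at a path whose terminal vertex is $t_i$ rather than some other vertex. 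The auxiliary device you invoke also cannot close the gap: Claim \ref{vraie claim 2} only gives the implication ``$\phi(g)\neq 0$ $\Rightarrow$ the $t_i$-edge at $g$ is not in $E_i$''; it says nothing about $t_i$-edges based at vertices with $\phi=0$, which are unavoidable in the paths you produce, and which may or may not lie in $H_i$. (Tellingly, if all your translations really were by elements of $H_i$, no $\phi$-verification would be needed at all; the fact that you need one signals that your translating elements are not known to be in $H_i$.)

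The idea that the paper uses, and that is absent from your proposal, is to \emph{give up preserving the endpoint} during the rewriting. The paper decomposes $p=w_0w_1\cdots w_{2k}$, where the $w_{2j}$ use only $y^{\pm 1}$- and $t_i^{\pm 1}$-edges and each $w_{2j-1}$ defines an element of $\langle x_{i+1},t_{i+1}\rangle\subset H_i$, and erases the odd segments by left-translating the remaining tail by their inverses: these translating elements lie in $H_i$, so the translated pieces stay in $\Gamma_c\setminus E_i$ for free, with no $\phi$-bookkeeping. The price is that the terminal vertex drifts: the resulting path $q$ uses only $y^{\pm 1}$- and $t_i^{\pm 1}$-edges but ends at $ht_i$ with $h\in\langle x_{i+1},t_{i+1}\rangle$. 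The endpoint is then recovered at the very end by pure group theory: $q$ starts at $e$ and uses only $y^{\pm 1}$- and $t_i^{\pm 1}$-edges, so its endpoint lies in $\langle y,t_i\rangle$, and since $\langle y,t_i\rangle\cap\langle x_{i+1},t_{i+1}\rangle=\{e\}$ one gets $h=e$, i.e. $q$ ends at $t_i$. This trivial-intersection argument is precisely the mechanism that replaces your unproved ``telescoping''; without it (or an equivalent), your induction has no reason to terminate at $t_i$, and your proof of the lemma is incomplete at its core step.
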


\begin{proof}
Assume the existence of an edge-path $p$ in $\Gamma_c \setminus E_i$ from $e$ to $t_i$. Then $p = w_0 w_1 \cdots w_{2k}$ where

\begin{enumerate}
  \item $w_{2j}$ is an edge-path passing only through horizontal and $t^{\pm 1}_i$-edges,
  \item $w_{2j-1}$ is an edge-path defining an element in the subgroup $\langle x_{i+1},t_{i+1} \rangle$ and does not pass through any $t^{\pm 1}_i$-edges.
\end{enumerate}

Since $\langle x_{i+1},t_{i+1} \rangle \subset H_i$, by a left-translation of $w_2 \cdots w_{2k}$ by $w^{-1}_1$ we get an edge-path $w^1_2 \cdots w^1_{2k}$ in $\Gamma_c \setminus E_i$ starting at the initial vertex of $w_1$ and ending at $w^{-1}_1 t_i$. Thus the concatenation $w_0 w^1_{2} \cdots w^1_{2k}$ defines an edge-path in $\Gamma_c \setminus E_i$ from $e$ to $w^{-1}_1 t_i$. By repeating this process we eventually get an edge-path $q = w_0 w^1_2 \cdots w^k_{2k}$ in $\Gamma_c \setminus E_i$ from $e$ to $w^{-1}_{2k-1} \cdots w^{-1}_1 t_i$ where $w^j_{2j}$ passes only through horizontal and $t^{\pm 1}_i$-edges whereas $w^{-1}_{2k-1} \cdots w^{-1}_1$ is an element in $\langle x_{i+1},t_{i+1} \rangle$. Since $q$ starts at $e$ and passes only through horizontal and $t^{\pm 1}_i$-edges, its terminal vertex is an element $g$ in $\langle y,t_i \rangle$. Let $h \in \langle x_{i+1},t_{i+1} \rangle$ with $g = h t_i$. Then $h = g t^{-1}_i$ so that $h \in \langle y,t_i \rangle$ since both $g$ and $t_i$ belong to $\langle y,t_i \rangle$. Therefore $h \in \langle x_{i+1},t_{i+1} \rangle \cap \langle y,t_i \rangle = \{e\}$. It follows that $q$ ends at $t_i$ so that $q$ is an edge-path as announced.
\end{proof}

\begin{corollary} 
\label{enfin} \hfill

There are exactly two connected components in $\Gamma_c \setminus E_i$: the connected component of $e$ and the connected component of $t_i$.
\end{corollary}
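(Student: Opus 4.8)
The plan is to assemble the three preceding results, each of which supplies a separate half of the statement, so that the corollary itself becomes pure bookkeeping. First I would invoke Corollary \ref{corollaire de enfin 0}: every vertex of $\Gamma_c$ is connected, inside $\Gamma_c \setminus E_i$, either to $e$ or to $t_i$. This already shows that $\Gamma_c \setminus E_i$ has \emph{at most} two connected components, namely the component containing $e$ and the one containing $t_i$, with no vertex lying outside both of these.

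Second, I would show that these two components are genuinely distinct, i.e.\ that $e$ and $t_i$ cannot be joined inside $\Gamma_c \setminus E_i$. Arguing by contradiction, suppose some edge-path in $\Gamma_c \setminus E_i$ connects $e$ to $t_i$. By Lemma \ref{enfin 2}, the existence of such a path forces the existence of a path from $e$ to $t_i$ in $\Gamma_c \setminus E_i$ using only horizontal (that is, $y^{\pm 1}$-) and $t^{\pm 1}_i$-edges. But Lemma \ref{enfin 1} asserts precisely that no such restricted edge-path exists. This contradiction shows that $e$ and $t_i$ lie in different connected components of $\Gamma_c \setminus E_i$.

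Combining the two points, the vertex set of $\Gamma_c \setminus E_i$ is exhausted by the component of $e$ and the component of $t_i$, and these two components are nonempty and distinct; hence there are exactly two of them, as claimed. I do not expect any real obstacle at this stage: all of the substantive work—controlling how an edge-path may cross the edges of $E_i$, and ruling out the restricted $y^{\pm 1}$/$t^{\pm 1}_i$-crossings from $e$ to $t_i$—has already been carried out in Corollary \ref{corollaire de enfin 0} and Lemmas \ref{enfin 1} and \ref{enfin 2}. The only care required here is to phrase the two-sided conclusion correctly, namely that Corollary \ref{corollaire de enfin 0} bounds the number of components above by two while the Lemmas \ref{enfin 1}--\ref{enfin 2} pair bounds it below by two.
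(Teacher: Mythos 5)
Your proof is correct and follows exactly the paper's own argument: Lemmas \ref{enfin 1} and \ref{enfin 2} together show that $e$ and $t_i$ lie in distinct components of $\Gamma_c \setminus E_i$, while Corollary \ref{corollaire de enfin 0} shows these are the only two components. The only difference is that you spell out the contradiction argument combining the two lemmas, which the paper leaves implicit.
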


\begin{proof}
By Lemmas \ref{enfin 1} and \ref{enfin 2}, $e$ and $t_i$ lie in two distinct connected components of $\Gamma_c \setminus E_i$. By Corollary \ref{corollaire de enfin 0} these are the only two connected components of $\Gamma_c \setminus E_i$.
\end{proof}

By Corollary \ref{enfin}, if ${\mathcal T}_i$ denotes a $i$-block  (see Definition \ref{definition enfin}) then $({\mathcal T}_i,{\mathcal T}^c_i)$ is a wall so that the following definition makes sense:

\begin{definition} \hfill

A {\em vertizontal $i$-wall ($i=1,2$)} is any left-translate $g({\mathcal T}_i,{\mathcal T}^c_i)$, $g \in G$, of a $i$-block ${\mathcal T}_i$ (see Definition \ref{definition enfin}).
\end{definition}


\begin{lemma} \hfill
\label{stabilisateurs2}

The collection of all the vertizontal $i$-walls ($i=1,2$) is $G$-invariant for the left-action of $G$ on itself. The left $G$-stabilizer of any vertizontal $i$-wall ($i=1,2 \mbox{ mod } 2$) is a conjugate of the subgroup $H_{i} = \langle x_{i+1},t_{i+1},y x_i y^{-1} t_i,x_i t^{-1}_i \rangle$.
\end{lemma}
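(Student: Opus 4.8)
The plan is to mirror the structure of the proof of Lemma~\ref{stabilisateurs1}. The $G$-invariance of the collection of vertizontal $i$-walls is immediate from the definition: this collection is the set of all left-translates $\{g({\mathcal T}_i,{\mathcal T}^c_i) : g \in G\}$, and left-multiplication by any $g_0 \in G$ merely permutes these translates (since $g \mapsto g_0 g$ is a bijection of $G$). For the stabilizer statement it then suffices to identify the left $G$-stabilizer of the single wall $({\mathcal T}_i,{\mathcal T}^c_i)$: for a translate $g({\mathcal T}_i,{\mathcal T}^c_i)$ the stabilizer is the conjugate $g\,\mathrm{Stab}({\mathcal T}_i,{\mathcal T}^c_i)\,g^{-1}$, so proving $\mathrm{Stab}({\mathcal T}_i,{\mathcal T}^c_i) = H_i$ yields the claim.

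The key reduction is to recognize $E_i$ as the edge-boundary of the wall $({\mathcal T}_i,{\mathcal T}^c_i)$ in $\Gamma_c$. By Lemma~\ref{enfin 0} each edge of $E^+_i = H_i(e,t_i)$ has its initial vertex in the component ${\mathcal T}_i$ of $e$ and its terminal vertex in the component ${\mathcal T}^c_i$ of $t_i$, so every edge of $E_i$ joins the two sides. Conversely, by Corollary~\ref{enfin} the graph $\Gamma_c \setminus E_i$ has exactly the two components ${\mathcal T}_i$ and ${\mathcal T}^c_i$, so any edge of $\Gamma_c$ with endpoints in different components must already lie in $E_i$. Hence $E_i$ is precisely the set of $1$-cells of $\Gamma_c$ separating ${\mathcal T}_i$ from ${\mathcal T}^c_i$. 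Since left-multiplication acts on $\Gamma_c$ as a label-preserving graph automorphism, an element $g$ stabilizes the partition $\{{\mathcal T}_i,{\mathcal T}^c_i\}$ if and only if it preserves this edge-boundary, i.e.\ $g E_i = E_i$; the reverse implication uses once more that $\Gamma_c\setminus E_i$ has exactly two components, so preserving $E_i$ forces $g$ to permute $\{{\mathcal T}_i,{\mathcal T}^c_i\}$. Thus $\mathrm{Stab}({\mathcal T}_i,{\mathcal T}^c_i)$ equals the setwise stabilizer of $E_i$ for the left action.

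It remains to compute this setwise stabilizer. One inclusion is formal: since $E_i = H_i(e,t_i)^{\pm 1}$ and $h H_i = H_i$ for every $h \in H_i$, we have $h E_i = E_i$, so $H_i \subseteq \mathrm{Stab}(E_i)$. For the reverse inclusion I would test $g$ on the distinguished edge $(e,t_i)$, which lies in $E^+_i$ because $e \in H_i$: if $gE_i = E_i$ then $(g,gt_i) = g(e,t_i)$ again belongs to $E_i$. As $(g,gt_i)$ carries the label $t_i$ while every edge of $E^-_i = H_i(t_i,e)$ carries the label $t^{-1}_i$, this edge must lie in $E^+_i = H_i(e,t_i)$; hence $(g,gt_i) = (h,ht_i)$ for some $h \in H_i$ and $g = h \in H_i$. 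Therefore $\mathrm{Stab}(E_i) = H_i$, and conjugating gives that the left stabilizer of $g({\mathcal T}_i,{\mathcal T}^c_i)$ is $gH_ig^{-1}$, as required.

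The main obstacle is the middle step: justifying cleanly that the stabilizer of the wall coincides with the setwise stabilizer of $E_i$. This is where Lemma~\ref{enfin 0} and Corollary~\ref{enfin} must be combined to pin down $E_i$ as the exact edge-cut, and where one must be careful that no element swaps the two sides. The label/orientation comparison above handles this automatically: it not only rules out a side-swap but simultaneously shows the stabilizer sits inside $H_i$ rather than merely normalizing it.
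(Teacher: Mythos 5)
Your proof is correct and follows essentially the same route as the paper's: reduce by conjugation to the stabilizer of $({\mathcal T}_i,{\mathcal T}^c_i)$, identify that stabilizer with the setwise stabilizer of the separating edge set $E_i = H_i(e,t_i)^{\pm 1}$, and conclude it equals $H_i$. The paper states this very tersely, and your additional details (the two-component argument via Corollary \ref{enfin}, and the label/orientation argument showing the edge stabilizer is trivial, which the paper itself uses in the proof of Proposition \ref{finitude}) simply make explicit what the paper leaves implicit.
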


  \begin{proof}
The left $G$-invariance is obvious, as in the proof of Lemma \ref{stabilisateurs1}. Let us check the assertion about the left $G$-stabilizers. A vertizontal wall is a left $G$-translate of $({\mathcal T}_i,{\mathcal T}^c_i)$, where ${\mathcal T}_i$ is a $i$-block, see Definition \ref{definition enfin}. Thus its left $G$-stabilizer is conjugate in $G$ to the left $G$-stabilizer of $({\mathcal T}_i,{\mathcal T}^c_i)$. Since ${\mathcal T}_i$ is separated from ${\mathcal T}^c_i$ by the left $H_i$-translates of $(e,t_i)^{\pm 1}$ (see Definition \ref{definition enfin}), this left $G$-stabilizer is $H_i$.
\end{proof}

\subsection{Finiteness of the number of vertizontal walls between any two elements}

\begin{proposition} \hfill
\label{finitude}

There are a finite number of vertizontal walls between any two elements in $G$.
\end{proposition}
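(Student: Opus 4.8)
The plan is to reduce the statement, as usual for wall constructions, to the case of two $\Gamma_c$-adjacent vertices. The key soft input is that wall-separation is subadditive along edge-paths: if a vertizontal wall $W=(u,u^c)$ separates two vertices $g$ and $h$ (meaning they lie in different sides of $W$), then along any fixed edge-path $g=v_0,v_1,\dots,v_n=h$ in $\Gamma_c$ the side changes at least once, so $W$ separates some consecutive pair $v_{k-1},v_k$. Hence the set of vertizontal walls separating $g$ and $h$ injects into the union, over the edges of this path, of the sets of vertizontal walls separating the two endpoints of each edge. Moreover, since the whole collection of vertizontal $i$-walls is $G$-invariant (Lemma~\ref{stabilisateurs2}), the number of vertizontal $i$-walls separating $v_{k-1}$ and $v_k=v_{k-1}s$ equals the number separating $e$ and the generator $s\in S_{\mathrm{min}}=\{y,t_1,t_2\}$. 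So it suffices to count, for each $i\in\{1,2\}$ and each $s\in S_{\mathrm{min}}$, the vertizontal $i$-walls separating $e$ from $s$, and then sum along a fixed $S_{\mathrm{min}}$-path between $g$ and $h$.

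The heart of the argument is the count for adjacent vertices. A vertizontal $i$-wall is a translate $g({\mathcal T}_i,{\mathcal T}^c_i)$, whose two sides are exactly the two connected components of $\Gamma_c\setminus gE_i$ (Corollary~\ref{enfin} applied to the translated picture). If $u,v$ are joined by a single $1$-cell $c$ of $\Gamma_c$, then $u$ and $v$ lie in different components of $\Gamma_c\setminus gE_i$ if and only if $c\in gE_i$: when $c\notin gE_i$ the edge itself keeps $u,v$ in one component, and when $c\in gE_i$ its endpoints lie on opposite sides because every edge of $E_i$ joins the two sides of the wall. Now $E_i=H_i(e,t_i)^{\pm1}$ consists only of $t_i$-edges. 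Therefore no vertizontal $i$-wall can separate $e$ from $y$ (a $y$-edge) nor from $t_{i+1}$ (a $t_{i+1}$-edge, with indices mod $2$). The only possibility is the $t_i$-edge $c=\{e,t_i\}$, and $c\in gE_i$ forces $g\in H_i$; by the stabilizer computation of Lemma~\ref{stabilisateurs2}, the left $G$-stabilizer of $({\mathcal T}_i,{\mathcal T}^c_i)$ is precisely $H_i$, so all these translates coincide and give a \emph{single} vertizontal $i$-wall.

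Assembling: between $e$ and $t_i$ there is exactly one vertizontal $i$-wall and no vertizontal $(i+1)$-wall, while between $e$ and $y$ there is no vertizontal wall of either type. Hence at most one vertizontal wall separates any pair of $\Gamma_c$-adjacent vertices. Fixing for arbitrary $g,h\in G$ a reduced $S_{\mathrm{min}}$-representative of $g^{-1}h$ of length $n$ and pulling it to an edge-path from $g$ to $h$, the subadditivity injection bounds the number of vertizontal walls separating $g$ and $h$ by $n$, which is finite. The only genuinely nontrivial ingredients are the equivalence ``separates $\iff$ the $1$-cell lies in $gE_i$'', which rests on Corollary~\ref{enfin} (that $({\mathcal T}_i,{\mathcal T}^c_i)$ is an honest wall cut out by the $t_i$-edges of $E_i$), and the absence of overcounting, which rests on the identification of the stabilizer with $H_i$ in Lemma~\ref{stabilisateurs2}; both are already established, so the remaining work is the bookkeeping above and this is where I expect the only care to be needed.
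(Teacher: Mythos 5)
Your proof is correct and follows essentially the same route as the paper: both reduce to counting walls crossed by a single $S_{\mathrm{min}}$-edge, note that $E_i$ contains only $t_i$-edges (so $y^{\pm1}$- and $t_{i+1}^{\pm1}$-edges cross no vertizontal $i$-wall), and show via the trivial stabilizer of a $1$-cell that the only vertizontal $i$-wall crossed by $(e,t_i)$ is $({\mathcal T}_i,{\mathcal T}^c_i)$ itself. The only cosmetic differences are that you make the path-subadditivity reduction explicit and cite Lemma~\ref{stabilisateurs2} for the last step, where the paper re-derives $gE_1=E_1$ directly.
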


\begin{proof}
We consider the set of vertizontal $1$-walls (the proof is the same for the vertizontal $2$-walls). We work with the generating set $S_{\mathrm{min}} = \{y,t_1,t_2\}$ given by Lemma \ref{yves}. Since any $y^{\pm 1}$- and any $t^{\pm 1}_2$-edge lies in $\displaystyle \Gamma_c \setminus (\bigcup_{g \in G} gE_1)$ (see Definition \ref{definition enfin}) no vertizontal $1$-wall is intersected when passing from $g$ to $gy$ nor from $g$ to $gt_2$ whatever $g \in G$ is considered. Thus one only has to check which vertizontal $1$-walls are intersected when passing from $e$ to $t_1$. There is of course the wall $({\mathcal T}_1,{\mathcal T}^c_1)$. Assume that there is another wall $g ({\mathcal T}_1,{\mathcal T}^c_1)$. Then, by definition, this wall corresponds to the partition of $\Gamma_c$ in two components given by $g E_1$. Thus $(e,t_1)^{\pm 1} \in g E_1$. Let $a \in E_1$ with $(e,t_1)^{\pm 1} = ga$. By definition of $E_1$ there is $h \in H_1$ (see Definition \ref{definition enfin}) with $a = h(e,t_1)^{\pm 1}$ hence $(e,t_1)^{\pm 1} = gh (e,t_1)^{\pm 1}$. Since the stabilizer of any $1$-cell is trivial we get $g=h^{-1}$ so that $g E_1 = E_1$. This implies $g ({\mathcal T}_1,{\mathcal T}^c_1) = ({\mathcal T}_1,{\mathcal T}^c_1)$ and we are done.
\end{proof}

\section{A proper action}

However obvious, the following proposition is indispensable: 

\begin{proposition} \hfill
\label{action}

The set of all the horizontal, vertical and vertizontal walls defines a space with walls structure $(G,\mathcal W)$ for $G$. The left action of $G$ on itself  defines an action on this space with walls structure.
\end{proposition}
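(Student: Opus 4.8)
The plan is to verify the two defining conditions of a space with walls for the collection $\mathcal W$ consisting of all horizontal, vertical and vertizontal walls, and then to check $G$-invariance. Recall that a space with walls $(X,\mathcal W)$ requires (i) that each element of $\mathcal W$ is a genuine partition of $X = G$ into two nonempty classes, and (ii) that for any two distinct points $g,h \in G$ the number of walls separating them is finite. All the serious work has already been done: conditions guaranteeing that each candidate is a bona fide wall, and the finiteness statements, are exactly the content of the earlier results.

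First I would confirm (i). For the horizontal and vertical walls this is immediate from Definition \ref{ensemble elementaire}, since $({\mathcal Y},{\mathcal Y}^c)$ and $({\mathcal V}_j,{\mathcal V}^c_j)$ are by construction partitions of $G$ into two classes, and their left $G$-translates $g({\mathcal Y},{\mathcal Y}^c)$, $g({\mathcal V}_j,{\mathcal V}^c_j)$ are again such partitions. For the vertizontal $i$-walls, the fact that $({\mathcal T}_i,{\mathcal T}^c_i)$ is an honest partition into two nonempty classes is precisely Corollary \ref{enfin}, which asserts that $\Gamma_c \setminus E_i$ has exactly two connected components, namely those of $e$ and of $t_i$; again left-translating by $g \in G$ preserves this. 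Thus every member of $\mathcal W$ is a wall.

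Next I would establish (ii). Fix distinct $g,h \in G$. The number of walls separating $g$ and $h$ is the sum of the numbers of separating horizontal walls, separating vertical walls, and separating vertizontal $1$- and $2$-walls. These four quantities are finite by Propositions \ref{horizontal}, \ref{vertical}, and \ref{finitude} (the last applied to both $i=1$ and $i=2$) respectively, so their sum is finite. Hence $\omega(g,h) < \infty$ for all distinct $g,h$, which is exactly condition (ii), and $(G,\mathcal W)$ is a space with walls.

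Finally, for the action statement, I would invoke the $G$-invariance of each of the three families under the left action of $G$ on itself: this is recorded in Lemma \ref{stabilisateurs1} for the horizontal and vertical walls and in Lemma \ref{stabilisateurs2} for the vertizontal walls. Since $\mathcal W$ is the union of these three families and each is preserved by left multiplication, $\mathcal W$ itself is left $G$-invariant, so left multiplication by any $g \in G$ permutes the walls and thereby defines an action of $G$ on the space with walls $(G,\mathcal W)$. No step here is a genuine obstacle; the proposition is, as the author notes, essentially a bookkeeping assembly of the preceding lemmas and propositions, and the only point requiring any care is to be sure that \emph{every} wall in $\mathcal W$ has been shown both to be a true two-class partition (for which the vertizontal case leans on Corollary \ref{enfin}) and to occur only finitely often between a given pair of points.
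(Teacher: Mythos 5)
Your proof is correct and follows essentially the same route as the paper: the paper's own (one-line) proof simply cites Propositions \ref{vertical}, \ref{horizontal} and \ref{finitude} for the finiteness condition, with the well-definedness of the vertizontal walls (Corollary \ref{enfin}) and the $G$-invariance (Lemmas \ref{stabilisateurs1} and \ref{stabilisateurs2}) already established beforehand, exactly as you invoke them. Your write-up is merely a more explicit assembly of the same ingredients.
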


\begin{proof}
By Propositions \ref{vertical}, \ref{horizontal} and \ref{finitude} there are a finite number of walls between any two elements so that $(G,\mathcal W)$ is a space with walls structure.
\end{proof}

We now prove the following

\begin{proposition} \hfill
\label{proprete}

The action of $G$ on the space with walls structure $(G,\mathcal W)$ given by Proposition \ref{action} is proper.
\end{proposition}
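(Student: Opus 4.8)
The plan is to prove that $g\mapsto\omega(e,g)$ is proper, i.e.\ that $\{g\in G:\omega(e,g)\le N\}$ is finite for every $N$. Write the (left $G$-invariant) wall distance as $\omega=\omega_{\mathcal Y}+\omega_{\mathcal V}+\omega_{\mathcal T_1}+\omega_{\mathcal T_2}$, the contributions of the horizontal, vertical and the two vertizontal families. First I would dispose of the vertical direction. By Proposition \ref{vertical} (one vertical wall between $g$ and $gt_i$, none between $g$ and $gs$ for a horizontal generator $s$) together with the right $\F{3}$-invariance of vertical walls (Lemma \ref{stabilisateurs1}(2)), the vertical walls are exactly the hyperplanes of the Cayley tree of $\F{2}$ pulled back along the projection $\psi\colon G\twoheadrightarrow\F{2}$ that kills the horizontal subgroup. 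Hence $\omega_{\mathcal V}(e,g)=|\psi(g)|_{\F{2}}$ is the tree distance, so $\omega(e,g)\le N$ forces $\psi(g)$ into the radius-$N$ ball of $\F{2}$. Writing $g=\tau w$ with $\tau=\psi(g)$ vertical and $w\in\F{3}$ horizontal, left-invariance and the triangle inequality give $\omega(e,w)\le\omega(e,g)+\omega(e,\tau)\le N+3N$, since $\omega(e,\tau)\le 3N$ (its horizontal count is $0$, and its vertical and each vertizontal count are bounded by $|\tau|_{\F{2}}\le N$). Thus it suffices to prove that the restriction of $\omega(e,\cdot)$ to the horizontal subgroup $\F{3}=\langle x_1,x_2,y\rangle$ is proper.

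On $\F{3}$ one has $\omega_{\mathcal V}\equiv 0$, and the key point is that the three remaining families detect the three free directions separately. A $y$-edge crosses exactly one horizontal wall and nothing else (Proposition \ref{horizontal}), while a $t_i$-edge crosses exactly one vertizontal $i$-wall among the vertizontal ones (Proposition \ref{finitude}); moreover $x_i=y^{-1}t_i^{-1}yt_i$, so rewriting a horizontal word as a path in $\Gamma_c$ turns each $x_i$ into a pair of $t_i$-edges. I would record the sample computations $\omega_{\mathcal Y}(e,y^k)=k$, $\;\omega_{\mathcal Y}(e,x_i^{k})=0$ (the horizontal walls crossed by the two $t_i$-blocks cancel), $\;\omega_{\mathcal T_i}(e,x_{i+1}^{k})=0$ (no $t_i$-edge occurs), and the crucial positivity $\omega_{\mathcal T_i}(e,x_i^{\pm k})\to\infty$. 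For the last one, the two $t_i$-edges produced by one $x_i$ cross the vertizontal $i$-walls indexed, as in the proof of Proposition \ref{finitude}, by the cosets $vH_i$ of their source vertices; here the two cosets differ by $y$, and $y\notin H_i$ because $\phi(H_i)=0$ (Claim \ref{vraie claim 2}). Hence consecutive $t_i$-blocks sit in distinct $H_i$-cosets and cross distinct vertizontal $i$-walls that do not cancel, so indeed $\omega_{\mathcal T_i}(e,x_i^{k})=2k$.

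To upgrade these single-direction computations to a uniform bound on an arbitrary $w\in\F{3}$ is the crux. I would aim for a constant $C$ with
\[
\omega_{\mathcal Y}(e,w)+\omega_{\mathcal T_1}(e,w)+\omega_{\mathcal T_2}(e,w)\ \ge\ |w|_{\F{3}}/C,
\]
which gives properness on $\F{3}$ and, with the first paragraph, properness of the action on $(G,\mathcal W)$. The hard part is cancellation: a reduced $\F{3}$-word, once rewritten in $\Gamma_c$, may cross and later re-cross walls, and I must rule this out happening in all three families at once. The mechanism that forbids simultaneous cancellation is the transversality already visible above, forced by $x_i=y^{-1}t_i^{-1}yt_i$: re-crossing a vertizontal $i$-wall requires reusing a $t_i$-edge, which (since $\phi(H_i)=0$) can only undo a previous $t_i$-edge lying in the same $H_i$-coset, and the rewriting couples each such $t_i$-pair to $y$-edges whose horizontal walls then fail to cancel, and conversely. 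I expect the clean way to organise this is the minimal-path/left-translation technique of Lemma \ref{enfin 1}: take a path realizing $\omega(e,w)$ and, whenever a wall is crossed and recrossed, left-translate the intervening sub-path by the corresponding element of $H_i$ (or of the vertical subgroup, whose invariance is Lemmas \ref{stabilisateurs1}, \ref{stabilisateurs2}) to shorten it; a configuration cancelling in all three families simultaneously would, after these translations, read a nontrivial relation in the free group $\F{3}$, which is impossible. This no-cancellation/accumulation step is the main obstacle, and everything surrounding it is bookkeeping with the local crossing numbers established in Propositions \ref{vertical}, \ref{horizontal} and \ref{finitude}.
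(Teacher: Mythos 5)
Your first two paragraphs are essentially correct and run parallel to the paper's own proof: the reduction to the horizontal subgroup via the vertical walls and the triangle inequality is the paper's first step, and the local counts (one horizontal wall per $y^{\pm 1}$-edge, one vertizontal $i$-wall per $t_i^{\pm 1}$-edge, the computation for $x_i^{k}$) are the right ingredients. The problem is your third paragraph, which is where the proposition would actually have to be proved: there you do not give an argument. You announce a target inequality $\omega_{\mathcal Y}(e,w)+\omega_{{\mathcal T}_1}(e,w)+\omega_{{\mathcal T}_2}(e,w)\ge |w|_{\F{3}}/C$ and a hoped-for mechanism (``I would aim for'', ``I expect the clean way to organise this is''), and you yourself flag the no-cancellation step as the unresolved ``main obstacle''. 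Since that step is precisely the content of the proposition, the proposal has a genuine gap at its crux. Note also that Propositions \ref{vertical}, \ref{horizontal} and \ref{finitude} are upper-bound (finiteness) statements, so ``bookkeeping'' with them can never produce the lower bounds properness requires.

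The obstacle you set up --- ruling out cancellation ``in all three families at once'' by a minimal-path/left-translation argument ending in a contradiction with freeness of $\F{3}$ --- is also not the difficulty the paper actually faces, and it is unclear it could be carried out as described. Since $\omega$ is a sum of per-family counts, it suffices to bound each family from below separately along the normal-form path of $g=wt$, and recrossing inside a single family is excluded by stabilizer arguments rather than path surgery: distinct $y^{\pm 1}$-edges of the reduced horizontal word $w$ cross distinct horizontal walls, because the stabilizer of $({\mathcal Y},{\mathcal Y}^c)$ is the vertical subgroup (Lemma \ref{stabilisateurs1}), which meets $\F{3}$ trivially; hence $\omega_{\mathcal Y}(e,w)$ is at least the number of $y^{\pm 1}$-letters of $w$, with no cancellation possible at all. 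Similarly, the vertizontal walls crossed by different $x$-syllables of $w$ are distinct because the prefix separating two syllables does not lie in the stabilizer of a vertizontal wall (a conjugate of $H_i$, Lemma \ref{stabilisateurs2}). Finally, your distinctness argument via $\phi$ is insufficient even for your sample computation $\omega_{{\mathcal T}_i}(e,x_i^{k})=2k$: $\phi$ separates cosets only through their $y$-exponent, so it cannot distinguish $x_i^{j}H_i$ from $x_i^{j'}H_i$ (all have $\phi=0$); one needs the additional fact that $x_i^{r}\notin H_i$, equivalently $t_i^{r}\notin H_i$, for $r\neq 0$, which nothing in your proposal provides. In short, the correct global strategy is per-family counting based on these distinctness facts, and exactly those facts --- not a universal linear lower bound --- are what is missing from your write-up.
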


\begin{proof}
Before beginning let us recall that what is important is the algebraic intersection-number of the edge-paths with each wall: if a given path $p$ intersects two times a wall $(W,W^c)$ first passing from $W$ to $W^c$ then crossing back from $W^c$ to $W$, this intersection-number is zero.

We work with the classical generating set $S=\{x_1,x_2,y,t_1,t_2\}$ of $G$. Each element $g \in G$ admits an unique reduced representative of the form $w t$ with $w$ a reduced horizontal word and $t$ a reduced vertical word. Since the vertical walls are the classical walls in the free group $\F{2}$, the number of vertical walls intersected goes to infinity with the number of letters in $t$. Thus we can assume that $g$ admits the reduced horizontal word $w$ as a reduced representative.

 Recall that the intersections of the horizontal walls with the horizontal subgroup give classical walls of the free group. By Lemma \ref{facile} horizontal walls are invariant under the right-action of the vertical subgroup. In particular any two $y^{\pm 1}$-edge in the reduced horizontal word $w$ define distinct horizontal walls. It follows that the number of horizontal walls intersected goes to infinity with the number of $y^{\pm 1}$-letters in $w$. Thus we can assume that $w$ contains only $x^{\pm 1}_i$-letters, $i=1,2$ i.e. we can assume that $w = x^{k_1}_{i_1} x^{k_2}_{i_2} \cdots x^{k_r}_{i_r}$ with $k_j \in \mz$ and $i_j \in \{1,2\}$, $i_j \neq i_{j+1}$. 
  
 Two vertizontal $i$-walls separate $e$ from $x_i$ in $\Gamma_c$: the $i$-wall $({\mathcal T}_i,{\mathcal T}^c_i)$ and the $i$-wall $y^{-1} ({\mathcal T}_i,{\mathcal T}^c_i)$. These are indeed the two walls intersected exactly once by the edge-path starting at $e$ and reading $y^{-1} t^{-1}_i y t_i$. Of course they also separate $e$ from $x^k_i$ ($k \in \mz$). The left-translates by $h \in G$ of these two $i$-walls separate $h$ from $hx^k_i$. This readily implies that there are at least two vertizontal walls intersected by any edge-path $x^{k_j}_{i_j}$ in $w$. Moreover the two $i$-walls given previously for passing from $e$ to $x^k_i$ are necessarily distinct from those given for passing from $x^k_i x^l_j$ ($j \neq i$) to $x^k_i x^l_j x^m_i$ ($k, l, m \in \mz$): indeed $x^k_i x^l_j$ ($i \neq j$) does not belong to the stabilizer of a vertizontal $i$-wall.  We so found a collection of $i$-walls intersected by the $x^{k_j}_{i_j}$ in $w$ which are all distinct and whose number goes to infinity with the number of times the letters $x^{\pm 1}_1$ and $x^{\pm 1}_2$ alternate in $w$ (since the number of intersections is increased by $2$ each times one reads a new word of the form $x^k_i$, $i=1$ or $i=2$). Therefore we can assume $w=x^k_1$ with $k \in \mn$.

The $2k$ left-translates by $x_1,x^2_1,\cdots,x^{k-1}_1$ of the vertizontal $1$-walls $({\mathcal T}_1,{\mathcal T}^c_1)$ and $y^{-1} ({\mathcal T}_1,{\mathcal T}^c_1)$ separate $e$ from $x^k_1$: these are indeed the walls crossed exactly once by the edge-path in $\Gamma_c$ starting at $e$, ending at $x^k_1$ and reading $y^{-1} t^{-k}_1 y t^k_1$. We so get the proposition.
\end{proof}

\section{The Haagerup property and dimension of the cube complex}

We give here, as corollaries of the construction developed above, the two main results we were interested in: the Haagerup property for $G$ and the, stronger, fact that $G$ acts properly isometrically on a cube complex (Theorem \ref{the theorem}).

\begin{corollary} \hfill

The group $G$ satisfies the Haagerup property.
\end{corollary}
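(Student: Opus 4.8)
The plan is to assemble the pieces already established above, since this corollary is a direct consequence of the space with walls machinery just developed. First I would invoke Proposition \ref{action}, which asserts that the family $\mathcal W$ consisting of all the horizontal, vertical and vertizontal walls turns $G$ into a space with walls $(G,\mathcal W)$, the left-multiplication action of $G$ on itself leaving $\mathcal W$ invariant. That proposition rests in turn on the three finiteness statements (Propositions \ref{vertical}, \ref{horizontal} and \ref{finitude}), which together guarantee that only finitely many walls separate any two given elements, so that the wall distance is well-defined.

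Next I would recall Proposition \ref{proprete}, which establishes that this action is proper: writing each $g\in G$ as a reduced representative $wt$ with $w$ a horizontal word and $t$ a vertical one, the number of walls separating $e$ from $g$ is forced to grow without bound with the length of $g$. The vertical letters of $t$ contribute a growing number of vertical walls, the $y^{\pm 1}$-letters of $w$ a growing number of horizontal walls, and the alternating $x_i^{\pm 1}$-syllables of $w$ a growing number of vertizontal walls; so $g\mapsto\omega(e,ge)$ is proper on $G$.

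Finally I would apply the theorem of Haglund--Paulin recalled above (the \cite{HaglundPaulin} statement, or equivalently the stronger Theorem \ref{ceci est un rappel} of Chatterji--Niblo), which says that any discrete group acting properly on a space with walls satisfies the Haagerup property. Combining this with the two preceding propositions yields the claim immediately.

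I expect essentially no obstacle at this stage: all of the genuine work---verifying that each of the three families is a bona fide family of two-class partitions, the finiteness of walls between any two elements, and the properness estimate---has already been carried out in the earlier sections. The only point requiring any care is to confirm that the hypotheses of the Haglund--Paulin theorem match exactly what Propositions \ref{action} and \ref{proprete} supply, namely a $G$-invariant wall family with finitely many walls between any two points together with a proper displacement function; this matching is a formality.
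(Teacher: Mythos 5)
Your proposal is correct and follows exactly the paper's own argument: invoke Proposition \ref{action} for the space with walls structure, Proposition \ref{proprete} for properness of the $G$-action, and the Haglund--Paulin theorem to conclude the Haagerup property. The extra recapitulation of how those propositions were proved is harmless but not needed.
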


\begin{proof}
By Propositions \ref{action} and \ref{proprete},  $G$ acts properly on a space with walls structure $(G,{\mathcal W})$.
By \cite{HaglundPaulin} $G$ satisfies the Haagerup property.
\end{proof}

\begin{corollary} \hfill
\label{allez}

The group $G$ acts properly isometrically on some $6$-dimensional cube complex, where $6$ is the supremum of the cardinalities of collections of walls which pairwise cross in the space with walls structure for $G$ given by Proposition \ref{action}.
\end{corollary}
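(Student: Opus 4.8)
The plan is to deduce everything from Theorem~\ref{ceci est un rappel}. By Propositions~\ref{action} and~\ref{proprete} the left action of $G$ on the space with walls $(G,\mathcal W)$ is proper, so Theorem~\ref{ceci est un rappel} already furnishes a proper isometric action of $G$ on a CAT(0) cube complex of dimension exactly $I(\mathcal W)$, the supremum of the cardinalities of finite pairwise-crossing collections of walls. Everything therefore reduces to the purely combinatorial identity $I(\mathcal W)=6$, which I would establish through the matching bounds $I(\mathcal W)\le 6$ and $I(\mathcal W)\ge 6$.

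For the upper bound, I would first partition $\mathcal W$ into the four $G$-invariant families singled out in the construction: the horizontal walls, the vertical walls, the vertizontal $1$-walls and the vertizontal $2$-walls. Given a pairwise-crossing collection $\mathcal C$, I would bound $|\mathcal C|$ family by family. Two horizontal walls never cross: by Claim~\ref{portes ouvertes} a horizontal wall is carried by a $G$-orbit of $y$-edges, and after intersecting with the horizontal subgroup these are exactly the edge-walls of the tree $\F{3}$ at the $y$-letters (Lemma~\ref{facile} making them right-vertical invariant), while two edge-walls of a tree are always nested; hence at most one horizontal wall lies in $\mathcal C$. Likewise the vertical walls are precisely the edge-walls of the tree $\F{2}$ (proof of Proposition~\ref{vertical}), so all of them — of either type $t_1$ or $t_2$ — are pairwise non-crossing, and $\mathcal C$ contains at most one vertical wall. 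The crux of the upper bound is to show that at most two vertizontal $i$-walls can pairwise cross, for each $i$. Here I would exploit that $x_i$ and $t_i$ commute, so that $\langle x_i,t_i\rangle\cong\mz^2$, and that the cut $E_i=H_i(e,t_i)^{\pm 1}$ defining $\mathcal T_i$ (Definition~\ref{definition enfin}) meets this grid in the translates of a single $t_i$-edge; the vertizontal $i$-walls, being in bijection with the cosets $gH_i$ (Lemma~\ref{stabilisateurs2}), then cross like the walls of a two-dimensional grid, whose crossing graph has clique number $2$. Concretely I would put a linear order on the vertizontal $i$-walls met along an $x_i$-axis and show that only consecutive ones cross. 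Summing, $I(\mathcal W)\le 1+1+2+2=6$.

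For the lower bound I would exhibit six pairwise-crossing walls. The natural candidates are the two vertizontal $1$-walls and the two vertizontal $2$-walls separating $e$ from $x_1$ and from $x_2$ respectively (those produced in the proof of Proposition~\ref{proprete}), together with one horizontal wall at a $y$-edge and one suitably chosen vertical wall, all based at a common configuration. To check that a given pair $(u,u^c),(v,v^c)$ crosses I would populate the four intersections $u\cap v$, $u\cap v^c$, $u^c\cap v$, $u^c\cap v^c$ by explicit group elements, using the relations $[x_i,t_j]=1$ to produce flat squares $\{g,ga,gab,gb\}$ on which the two walls split transversally, and the invariance properties (Lemmas~\ref{facile},~\ref{stabilisateurs1},~\ref{stabilisateurs2}) to transport a known quadrant to the others. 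For instance the vertizontal $1$-wall $(\mathcal T_1,\mathcal T_1^c)$ and the vertical $t_2$-wall cross because $e$, $t_2$, $t_1$, $t_2t_1$ land in four distinct quadrants, the point being that $t_2\in H_1$ carries $e$ across $E_1$ while preserving the leading vertical letter $t_2$.

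The step I expect to be the main obstacle is exactly this crossing bookkeeping, in two guises. First, the bound $\le 2$ for each vertizontal family is delicate: it is not enough to know that many vertizontal $i$-walls are crossed along an $x_i$-geodesic (as in Proposition~\ref{proprete}), since walls met by a common geodesic need not pairwise cross; one must genuinely control the nesting/crossing pattern of the cuts $E_i$ inside the $\mz^2$-grid and rule out three pairwise-crossing vertizontal walls. Second, assembling a single sextuple all of whose $\binom{6}{2}$ pairs cross forces one to choose the horizontal and vertical representatives and the basepoint compatibly with both vertizontal directions at once; the commutation relations $[x_i,t_j]=1$ are precisely what make enough flat squares available, and organizing these verifications — rather than any isolated computation — is where the real work lies.
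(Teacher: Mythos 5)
Your overall architecture coincides with the paper's: invoke Theorem~\ref{ceci est un rappel} via Propositions~\ref{action} and~\ref{proprete}, prove $I(\mathcal W)\le 6$ by bounding each of the four families separately (at most one horizontal wall, one vertical wall, and two vertizontal $i$-walls for each $i$), and exhibit six pairwise-crossing walls. Your nestedness argument for horizontal and vertical walls is exactly Lemma~\ref{pour le rapporteur}, and your candidate sextuple is the $y^{-1}$-translate of the paper's family $\{(\mathcal Y,\mathcal Y^c),(\mathcal V_1,\mathcal V_1^c),(\mathcal T_i,\mathcal T_i^c),y(\mathcal T_i,\mathcal T_i^c)\,;\,i=1,2\}$, so it would work --- but note that the lower bound is not a formality: the paper devotes a long list of explicit quadrant verifications to the fifteen pairs, of which you carry out only one. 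The genuine gap, however, is at the step you yourself flag as the crux: the bound of two pairwise-crossing vertizontal $i$-walls, where the mechanism you propose would fail.

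The grid picture is incorrect. Inside $\langle x_i,t_i\rangle\cong\mz^2$ the cut $E_i$ traces the staircase of edges $(x_i^kt_i^{-k},x_i^kt_i^{-k+1})$, $k\in\mz$ (the translates of $(e,t_i)$ by powers of $x_it_i^{-1}\in H_i$), and a translate $gE_i$ with $g$ in the grid traces a \emph{parallel} staircase; parallel staircases are nested, never crossing. So grid combinatorics would predict that distinct vertizontal $i$-walls never cross, contradicting Lemma~\ref{pour le rapporteur II}: the crossings that do exist come precisely from the translates by $y^{\pm 1}$, which do not preserve the grid and are invisible inside it. Your concrete plan is also false as stated: the vertizontal $i$-walls met along $e,x_i,x_i^2,\dots$ are $x_i^j(\mathcal T_i,\mathcal T_i^c)$ and $x_i^jy^{-1}(\mathcal T_i,\mathcal T_i^c)$, and the crossing pairs among these are exactly those with the \emph{same} $j$ --- a perfect matching, not ``consecutive ones cross'': for instance $x_i^jy^{-1}(\mathcal T_i,\mathcal T_i^c)$ and $x_i^{j+1}(\mathcal T_i,\mathcal T_i^c)$ differ by $yx_i$, which lies in neither $yH_i$ (since $x_i\notin H_i$) nor $y^{-1}H_i$ (since the $y$-exponent homomorphism vanishes on $H_i$ but equals $2$ on $y^2x_i$). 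Moreover, even a correct clique bound for the walls dual to a single $x_i$-axis would not bound cliques among \emph{all} vertizontal $i$-walls. What is actually needed --- and what the paper proves in Lemma~\ref{pour le rapporteur II}(2) via Claims~\ref{49} and~\ref{50}, which lean on the edge-path machinery of Lemma~\ref{enfin 0} and Corollary~\ref{enfin} --- is the coset criterion: $g(\mathcal T_i,\mathcal T_i^c)$ can cross $(\mathcal T_i,\mathcal T_i^c)$ only if $g\in yH_i\cup y^{-1}H_i$, together with the observation that $y(\mathcal T_i,\mathcal T_i^c)$ and $y^{-1}(\mathcal T_i,\mathcal T_i^c)$ do not cross each other (their difference $y^{2}$ lies in neither coset). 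This connectivity argument in $\Gamma_c$, not nesting in a $\mz^2$-grid, is the core of the upper bound, and nothing in your proposal supplies it.
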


\begin{proof}
Let $(G,\mathcal W)$ be the space with walls structure for $G$ given by Proposition \ref{action}. Let us recall that $\mathcal W$ is the set of all the horizontal, vertical and vertizontal walls. By Proposition \ref{proprete}, $G$ acts properly on $(G,{\mathcal W})$. By \cite{ChatterjiNiblo} $G$ acts properly isometrically on some $I({\mathcal W})$-dimensional cube complex where $I({\mathcal W})$ is the supremum of the cardinalities
of collections of walls which pairwise cross (see Theorem \ref{ceci est un rappel}).  


\begin{lemma} \hfill
\label{pour le rapporteur}

With the notations above: let $\mathcal F$ be a collection of walls in $(G,\mathcal W)$ which pairwise cross. Then there is at most one vertical wall and one horizontal wall in $\mathcal F$.
\end{lemma}

\begin{proof}
Vertical walls are the classical walls of the free group: the two sides of such a wall are separated by an edge of the Cayley graph with respect to a basis of the free group (a tree).  Thus two distinct such walls satisfy that one of the two sides of a wall properly contains a side of the other. Consequently, two distinct vertical walls do not pairwise cross. Let us now consider two distinct horizontal walls. By Claim \ref{portes ouvertes}, the two sides of a horizontal wall are separated by $\displaystyle \bigcup_{t \in \F{2}} (gt,gty)^{\pm 1}$. Thus, as it is the case for the free group, one of the two sides of a wall properly contains a side of the other. Lemma \ref{pour le rapporteur} is proved.
\end{proof}

\begin{lemma} \hfill
\label{pour le rapporteur II}

With the notations of Lemma \ref{pour le rapporteur}, for each $i \in \{1,2\}$:

\begin{enumerate}
  \item The vertizontal $i$-walls $({\mathcal T}_i,{\mathcal T}^c_i)$ and $y({\mathcal T}_i,{\mathcal T}^c_i)$ cross.
  \item There are at most two vertizontal $i$-wall in $\mathcal F$.
\end{enumerate}
\end{lemma}

\begin{proof}
The following claim is obvious:

\begin{claim}
\label{49}
For any $g \in G$ either $g E_i \cap E_i = \emptyset$ which is equivalent to $g \notin H_i$ or $g E_i = E_i$ which is equivalent to $g \in H_i$.
\end{claim}

Claim \ref{50} below is extracted from the proof of Lemma \ref{enfin 0}.

\begin{claim}
\label{50}
Let $g_0, g_1$ be the two initial (resp.~terminal) vertices of an edge in $g E^+_i$, $g \notin H_i$ (we recall that $E_i = E^+_i \cup E^-_i$ with $E^+_i = H_i (e,t_i)$). Then there is a reduced edge-path in $\Gamma_c \setminus gE_i$ between $g_0$ and $g_1$ satisfying the following properties:

\begin{itemize}
  \item It is a concatenation of edge-paths of four kinds: edge-path reading words of the form $(y t_i y^{-1})^{\pm 1}$, edge-paths reading words of the form $(y^{-1} t_i y)^{\pm 1}$ and edge-paths reading words of the form $t^{\pm 1}_{i+1}$ or ${(y^{-1} t^{-1}_{i+1} y t_{i+1})}^{\pm 1}$ ($i=1,2 \mbox{ mod } 2$).
  \item Both the initial and terminal vertices of each of the above subpaths are the initial (resp.~terminal) vertices of $t_i$-edges in $g E^+_i$.
\end{itemize}
\end{claim}

Assume that two distinct vertizontal $i$-walls $g_1 ({\mathcal T}_i,{\mathcal T}^c_i)$ and $g_2 ({\mathcal T}_i,{\mathcal T}^c_i)$ cross. Then (just apply a left-translation by $g^{-1}_1$) $({\mathcal T}_i,{\mathcal T}^c_i)$ and $g ({\mathcal T}_i,{\mathcal T}^c_i)$ cross, with $g = g^{-1}_1 g_2$. It is thus sufficient to prove that there is at most one left-coset $gH_i$ ($g \notin H_i$) such that $({\mathcal T}_i,{\mathcal T}^c_i)$
and $g  ({\mathcal T}_i,{\mathcal T}^c_i)$ cross.

There is a reduced edge-path $p$ in $\Gamma_c \setminus (E_i \cup gE_i)$ between $e$ and the initial vertex $g_0$ of some edge in $gE_i$. Without loss of generality assume $g_0 \in g {\mathcal T}_i$, which is equivalent to $(g_0,g_0 t_i) \in g E^+_i$. Then $\{e,g_0\} \subset {\mathcal T}_i \cap g {\mathcal T}_i$ so that in particular ${\mathcal T}_i \cap g {\mathcal T}_i \neq \emptyset$.

By Claim \ref{49}, since $(e,t_i) \in E_i$ (resp.~$(g_0,g_0 t_i) \in gE_i$ and $g \notin H_i$), we have $(e,t_i) \notin gE_i$ (resp.~$(g_0,g_0t_i) \notin E_i$). Therefore, setting $q = p (g_0,g_0t_i)$ we get a reduced edge-path $q \subset \Gamma_c \setminus E_i$ between $e$ and $g_0 t_i$ so that $g_0 t_i \in {\mathcal T}_i$. Moreover, since $(g_0,g_0 t_i) \in g E^+_i$, $g_0 t_i \in g{\mathcal T}^c_i$. Hence $g_0 t_i \in   {\mathcal T}_i \cap g{\mathcal T}^c_i$ so that ${\mathcal T}_i \cap g{\mathcal T}^c_i \neq \emptyset$. Similarly, setting $r = p^{-1} (e,t_i)$ we get an edge-path in $\Gamma_c \setminus gE_i$ so that $t_i \in g{\mathcal T}_i$. Since $t_i \in {\mathcal T}^c_i$ this implies $g {\mathcal T}_i \cap {\mathcal T}^c_i \neq \emptyset$.

At this point we so proved that ${\mathcal T}_i \cap g {\mathcal T}_i$, ${\mathcal T}_i \cap g{\mathcal T}^c_i \neq \emptyset$ and $g {\mathcal T}_i \cap {\mathcal T}^c_i \neq \emptyset$ (of course, if we had assumed $g_0 \in g {\mathcal T}^c_i$ instead of $g_0 \in g {\mathcal T}_i$ we would also have found three non-empty intersections among the four possible intersections between the different sides of the walls; however they would not have been the same but ${\mathcal T}_i \cap g {\mathcal T}^c_i$, ${\mathcal T}^c_i \cap g {\mathcal T}^c_i$ and ${\mathcal T}_i \cap g {\mathcal T}_i$). 

Assume now ${\mathcal T}^c_i \cap g{\mathcal T}^c_i \neq \emptyset$. Since $t_i \in {\mathcal T}_i^c$ there exists a reduced edge-path $s$ in $\Gamma_c \setminus E_i$ from $t_i$ to some element in $g {\mathcal T}^c_i$. Since $t_i \in g {\mathcal T}_i$ this edge-path $s$ crosses an edge $(g_1,g_1t_i)$ in $g E^+_i$, and we can assume that it crosses only one such edge. Let us denote by $q^\prime$ the subpath of $s$  from $t_i$ to $g_1$: $q^\prime$ is a reduced edge-path in $\Gamma_c \setminus (E_i \cup gE_i)$. Let us consider a reduced edge-path $q^{\prime \prime}$ in $\Gamma_c \setminus gE_i$ from $g_1$ to $g_0$ as given by Claim \ref{50}. Assume that $q^{\prime \prime}$ is contained in $\Gamma_c \setminus E_i$. Then $q^\prime q^{\prime \prime} p$ is an edge-path in $\Gamma_c \setminus E_i$ from $e$ to $t_i$ which is impossible. Therefore $q^{\prime \prime}$ crosses an edge in $E_i$. But, by construction (see Claim \ref{50}), the only $t^{\pm 1}_i$-edges crossed by $q^{\prime \prime}$ belong to subpaths of the form $(y t_i y^{-1})^{\pm 1}$ or $(y^{-1} t_i y)^{\pm 1}$ and the initial and terminal vertices of these subpaths are in $gE_i$. Thus these $t^{\pm 1}_i$-edges crossed by $q^{\prime \prime}$ are $t^{\pm 1}_i$-edges in $y g E_i$ or in $y^{-1} g E_i$. Since they belong to $ygE_i \cap E_i$ or to $ygE_i \cap E_i$, by Claim \ref{49} we get $yg \in H_i$ or $y^{-1}g \in H_i$. Hence $g \in yH_i$ or $g \in y^{-1}H_i$. From all which precedes, $y ({\mathcal T}_i,{\mathcal T}^c_i)$ and $y^{-1} ({\mathcal T}_i,{\mathcal T}^c_i)$ do not cross since, by a left-translation by $y$, if they would cross so would $y^2 ({\mathcal T}_i,{\mathcal T}^c_i)$ and $({\mathcal T}_i,{\mathcal T}^c_i)$ which has been proved to be false. We so got that if two vertizontal $i$-walls $\mathcal Z$ and ${\mathcal Z}^\prime$ cross then there is $g \in G$ such that, up to a permutation of $\mathcal Z$ and ${\mathcal Z}^\prime$, either $\mathcal Z = g({\mathcal T}_i,{\mathcal T}^c_i)$ and ${\mathcal Z}^\prime = gy({\mathcal T}_i,{\mathcal T}^c_i)$ or $\mathcal Z = g({\mathcal T}_i,{\mathcal T}^c_i)$ and ${\mathcal Z}^\prime = gy^{-1}({\mathcal T}_i,{\mathcal T}^c_i)$. This implies Item (2) of Lemma \ref{pour le rapporteur II}.

It only remains to check that $({\mathcal T}_i,{\mathcal T}^c_i)$ and $y ({\mathcal T}_i,{\mathcal T}^c_i)$ cross. Obviously (use the previous paragraphs with $g=g_0=y$) $e \in {\mathcal T}_i \cap y {\mathcal T}_i$, $e \in {\mathcal T}_i \cap y {\mathcal T}^c_i$ and $t_i \in {\mathcal T}^c_i \cap y {\mathcal T}_i$ so that  ${\mathcal T}_i \cap y {\mathcal T}_i \neq \emptyset$, ${\mathcal T}_i \cap y {\mathcal T}^c_i \neq \emptyset$ and ${\mathcal T}^c_i \cap y {\mathcal T}_i \neq \emptyset$. In order to prove that ${\mathcal T}^c_i \cap y {\mathcal T}^c_i \neq \emptyset$, let us observe that $t_i \in {\mathcal T}^c_i$ is connected to $y t_i x^{-1}_i t_i$ by the edge-path $(t_i,t_i y) (t_iy,t_iyt_i)$ since $t_i y = y x^{-1}_i t_i$ and $t_i x^{-1}_i = x^{-1}_i t_i$. The edge $(t_i,t_i y)$ is a $y$-edge so belongs to $\Gamma_c \setminus E_i$. The edge $(t_iy,t_iyt_i) = (yt_ix^{-1}_i,yt_ix^{-1}_it_i)$ is in $y E_i$ so, by Claim \ref{49}, does not belong to $E_i$. Hence  $(t_i,t_i y) (t_iy,t_iyt_i)$ is an edge-path in $\Gamma_c \setminus E_i$ from $t_i \in {\mathcal T}^c_i$ to $y t_i x^{-1}_i t_i$ so that $y t_i x^{-1}_i t_i \in {\mathcal T}^c_i$. Moreover $yt_i \in y{\mathcal T}^c_i$ is connected to $yt_ix^{-1}_it_i = yt_i y^{-1}t_iy$ by the edge-path $(yt_i,yt_iy^{-1})(yt_iy^{-1},yt_iy^{-1}t_i)(yt_iy^{-1}t_i,yt_iy^{-1}t_iy)$. The first and last edge in this edge-path are respectively $y^{-1}$- and $y$-edges and so belong to $\Gamma_c \setminus yE_i$. The $t_i$-edge $(yt_iy^{-1},yt_iy^{-1}t_i)$ is in $E_i$ since $yt_iy^{-1} \in H_i$, see Remark \ref{une remarque}. By Claim \ref{49}, it is not in $yE_i$. We so proved that $(yt_i,yt_iy^{-1})(yt_iy^{-1},yt_iy^{-1}t_i)(yt_iy^{-1}t_i,yt_iy^{-1}t_iy)$ is an edge-path from $yt_i \in y{\mathcal T}^c_i$ to $yt_ix^{-1}_it_i$ in $\Gamma_c \setminus yE_i$ so that $yt_ix^{-1}_it_i \in y{\mathcal T}^c_i$. Now $y t_i x^{-1}_i t_i \in {\mathcal T}^c_i$ and $yt_ix^{-1}_it_i \in y{\mathcal T}^c_i$ so that ${\mathcal T}^c_i \cap y {\mathcal T}^c_i \neq \emptyset$ and Item (1) of Lemma \ref{pour le rapporteur II} is proved.
\end{proof}

Let us now conclude the proof of Corollary \ref{allez}. We consider the family $$\mathcal F = \{(\mathcal Y,{\mathcal Y}^c),({\mathcal V}_1,{\mathcal V}^c_1),({\mathcal T}_i,{\mathcal T}^c_i), y ({\mathcal T}_i,{\mathcal T}^c_i) \mbox{  ;  } i=1,2\}$$ of walls of $(G,\mathcal W)$. By Lemma \ref{pour le rapporteur II}, for each $i$ the two vertizontal $i$-walls cross. Let us check the other intersections:

\begin{itemize}
  \item $e \in \mathcal Y \cap {\mathcal V}_1$, $t_1 \in \mathcal Y \cap {\mathcal V}^c_1$, $y \in {\mathcal Y}^c \cap {\mathcal V}_1$ and $yt_1 \in {\mathcal Y}^c \cap {\mathcal V}^c_1$ so that $(\mathcal Y,{\mathcal Y}^c)$ and $({\mathcal V}_1,{\mathcal V}^c_1)$ cross.  
  \item $e \in {\mathcal T}_1 \cap {\mathcal T}_2$, $t_1 \in {\mathcal T}^c_1 \cap {\mathcal T}_2$, $t_2 \in {\mathcal T}_1 \cap {\mathcal T}^c_2$, $t_2t_1 \in {\mathcal T}^c_1 \cap {\mathcal T}^c_2$ so that $({\mathcal T}_1,{\mathcal T}^c_1)$ and $({\mathcal T}_2,{\mathcal T}^c_2)$ cross.
  
  For the intersections ${\mathcal T}^c_i \cap y{\mathcal T}^c_j$ in the following two items, we refer the reader to Figure \ref{derniere figure} (the $t_i$-edges in $E_i$ are the thick edges, the $t_j$-edges in $y E_j$ are the dotted edges). 
  
  \item $e \in {\mathcal T}_1 \cap y{\mathcal T}_2$, $t_1 \in {\mathcal T}^c_1 \cap y{\mathcal T}_2$, $yt_2 \in {\mathcal T}_1 \cap y{\mathcal T}^c_2$, $t_1 y x_2   = y x^{-1}_1t_1 x_2  \in {\mathcal T}^c_1 \cap y{\mathcal T}^c_2$ so that $({\mathcal T}_1,{\mathcal T}^c_1)$ and $y({\mathcal T}_2,{\mathcal T}^c_2)$ cross.
  \item $e \in y{\mathcal T}_1 \cap {\mathcal T}_2$, $yt_1 \in y{\mathcal T}^c_1 \cap {\mathcal T}_2$, $t_2 \in y{\mathcal T}_1 \cap {\mathcal T}^c_2$, $y x^{-1}_2 t_2 x_1 = t_2 y x_1 \in y{\mathcal T}^c_1 \cap {\mathcal T}^c_2$ so that $y({\mathcal T}_1,{\mathcal T}^c_1)$ and $({\mathcal T}_2,{\mathcal T}^c_2)$ cross.
  
  \begin{figure}[htbp]
{\centerline{\includegraphics[height=5cm, viewport = 30 545 598 820,clip]{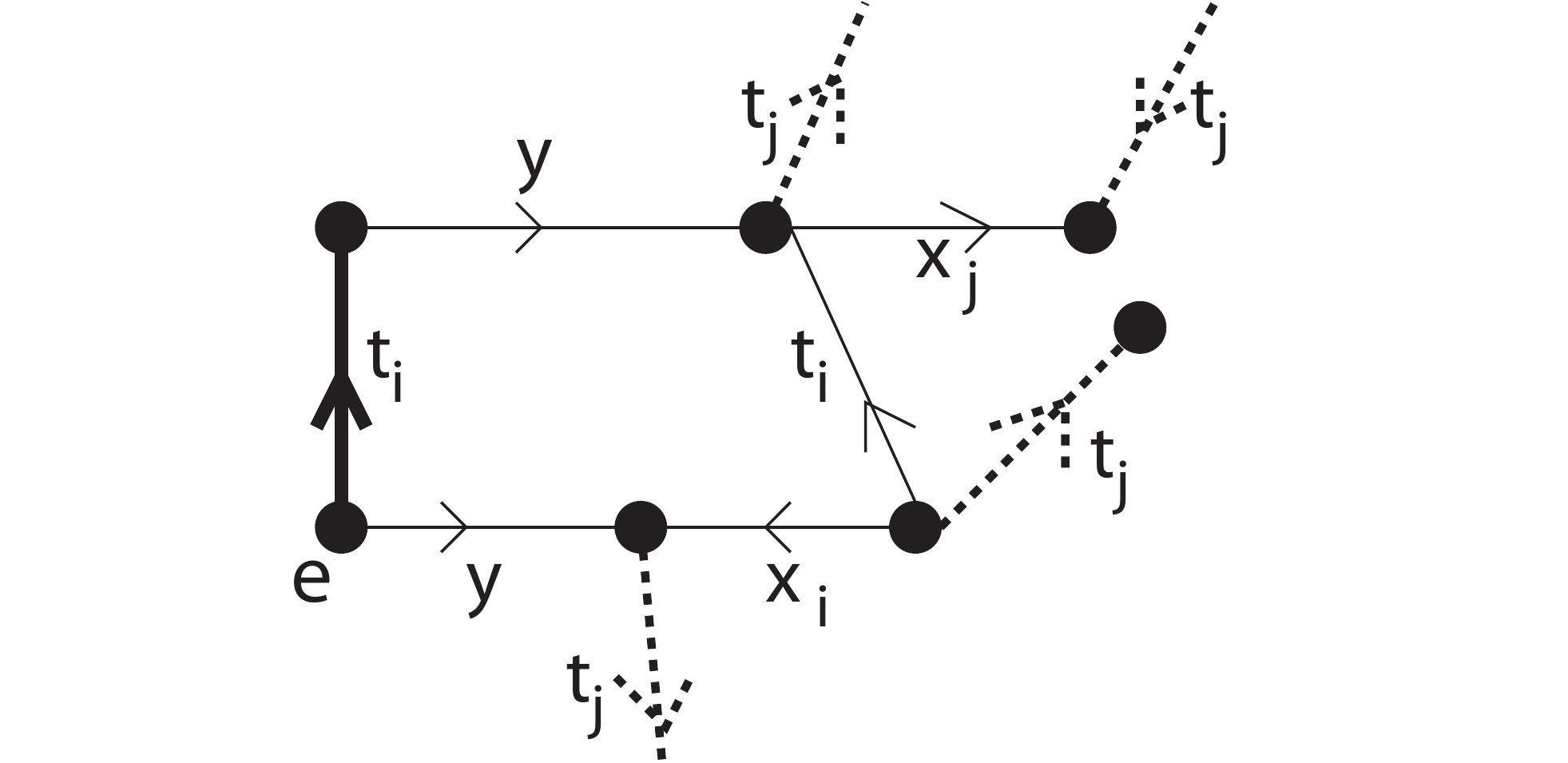}}} \caption{\label{derniere figure}}
\end{figure}
  
  \item for each $i \in \{1,2\}$, $e \in {\mathcal Y} \cap {\mathcal T}_i$, $y \in {\mathcal Y}^c \cap {\mathcal T}_i$, $t_i \in \mathcal Y \cap {\mathcal T}^c_i$, $y t_i y^{-1} t_i \in {\mathcal Y}^c \cap {\mathcal T}^c_i$ so that $(\mathcal Y,{\mathcal Y}^c)$ and $({\mathcal T}_i,{\mathcal T}^c_i)$ cross for each $i \in \{1,2\}$.
  \item for each $i \in \{1,2\}$, $e \in {\mathcal Y} \cap y {\mathcal T}_i$, $y \in {\mathcal Y}^c \cap y {\mathcal T}_i$, $yt_i \in {\mathcal Y}^c \cap y{\mathcal T}^c_i$, $t_i y t_i \in {\mathcal Y} \cap y {\mathcal T}^c_i$ (since $yt_i (y^{-1} t_i y = t_i yx_i (t_i x^{-1}_i y^{-1}y) = t_i y t_i (x^{-1}_i x_i)(y^{-1}y)$) so that $(\mathcal Y,{\mathcal Y}^c)$ and $y({\mathcal T}_i,{\mathcal T}^c_i)$ cross for each $i \in \{1,2\}$.
  \item $e \in {\mathcal V}_1 \cap {\mathcal T}_1$, $x_1 = y^{-1} t^{-1}_1 y t_1 \in {\mathcal V}_1 \cap {\mathcal T}^c_1$, $t_1 \in  {\mathcal V}^c_1 \cap {\mathcal T}^c_1$, $y t_1 y^{-1} = t_1 y x_1y^{-1} \in {\mathcal V}^c_1 \cap {\mathcal T}_1$ so that $({\mathcal V}_1,{\mathcal V}^c_1)$ and $({\mathcal T}_1,{\mathcal T}^c_1)$ cross.
  \item $e \in {\mathcal V}_1 \cap {\mathcal T}_2$, $t_1 \in  {\mathcal V}^c_1 \cap {\mathcal T}_2$, $t_2 \in {\mathcal V}_1 \cap {\mathcal T}^c_2$, $t_1 t_2 \in {\mathcal V}^c_1 \cap {\mathcal T}^c_2$ so that $({\mathcal V}_1,{\mathcal V}^c_1)$ and $({\mathcal T}_2,{\mathcal T}^c_2)$ cross.
  \item $e \in {\mathcal V}_1 \cap y{\mathcal T}_1$, $t_1 \in {\mathcal V}^c_1 \cap y{\mathcal T}_1$, $yt_1 = t_1 y x_1 \in  {\mathcal V}^c_1 \cap y{\mathcal T}^c_1$, $yx_1= y(y^{-1}t^{-1}_1 y)t_1 \in {\mathcal V}_1 \cap y{\mathcal T}^c_1$ so that $({\mathcal V}_1,{\mathcal V}^c_1)$ and $y({\mathcal T}_1,{\mathcal T}^c_1)$ cross.
  \item $e \in {\mathcal V}_1 \cap y{\mathcal T}_2$, $t_1 \in  {\mathcal V}^c_1 \cap y{\mathcal T}_2$, $yt_2 \in {\mathcal V}_1 \cap y{\mathcal T}^c_2$, $yt_1 t_2=t_1yx_1t_2 \in {\mathcal V}^c_1 \cap y{\mathcal T}^c_2$ so that $({\mathcal V}_1,{\mathcal V}^c_1)$ and $y({\mathcal T}_2,{\mathcal T}^c_2)$ cross.
\end{itemize}
  
  Thus the given family $\mathcal F = \{(\mathcal Y,{\mathcal Y}^c),({\mathcal V}_1,{\mathcal V}^c_1),({\mathcal T}_i,{\mathcal T}^c_i), y ({\mathcal T}_i,{\mathcal T}^c_i) \mbox{  ;  } i=1,2\}$ is a family of $6$ pairwise crossing walls of $(G,\mathcal W)$. By Lemmas \ref{pour le rapporteur} and \ref{pour le rapporteur II}, in a family of pairwise crossing walls there are at most one horizontal wall, one vertical wall and two vertizontal $i$-walls for each $i \in \{1,2\}$. Therefore such a family contains at most $6$ distinct walls. The proof of Corollary \ref{allez}, and so of Theorem \ref{the theorem}, at least in the case where $n=2$, is complete. 
\end{proof}

\begin{remark}
As we noticed the generalization to any integer $n \geq 3$
is straightforward: if $\F{n} = \langle t_1,\cdots,t_n \rangle$ denotes the vertical subgroup of $G = \F{n+1} \rtimes_\sigma \F{n}$ then as above the vertical walls are the classical walls of the free group $\F{n}$; if $\F{n+1} = \langle x_1,\cdots,x_n,y \rangle$ denotes the horizontal subgroup of $G$ then as above the horizontal walls are the left $G$-translates of the wall separated by the edges in $\displaystyle \bigcup_{t \in \F{n}} (t,ty)^{\pm 1}$. Finally there is a type of vertizontal wall for each letter in $\{t_1,\cdots,t_{n}\}$ and $i$-vertizontal walls are defined as in \ref{definition enfin} by posing $H_i =  \langle x_{j},t_{j},y x_i y^{-1} t_i, x_i t^{-1}_i  \mbox{  ,  } j \neq i \rangle$. The dimension of the cube complex on which $G$ acts is $2n+2$: the $+2$ comes from the fact that one can always put one horizontal and one vertical wall in a family of pairwise crossing walls, and no more. The $2n$ comes from the fact that there are $n$ distinct types of vertizontal walls and for each vertizontal $i$-wall one can put two distinct $i$-walls in a family of pairwise crossing walls, and no more.
\end{remark}

\begin{remark}
By adapting our construction we get that the group $\F{n+1} \rtimes_\sigma \F{2}$ ($n \geq 3$), where $\sigma(t_i)$ fixes any $x_j$ for $j=1,\cdots,n$ and $\sigma(t_i)(y)=y x_i$, $i=1,2$, acts on a $6$-dimensional CAT(0) cube complex: the walls are the vertical walls defined above, the vertizontal walls associated to $t_i$ defined in a similar way as above (in the definition of the subgroup $H_i$ add $x_3,\cdots,x_n$ as generators) and horizontal walls associated not only to $y$ but also to $x_3,\cdots,x_n$. There are more types of horizontal walls but less types of vertizontal walls than in the ${n}^{\mathrm{th}}$-group of Formanek - Procesi. Since two distinct horizontal walls cannot be in a collection of pairwise crossing walls (contrary to what happens with vertizontal walls) this explains the smaller dimension of the complex in this case.
Thus what is perhaps the most important, for the dimension of the cube complex, is the way the images of the higher edges cover the lower strata. Here the rose with $n+1$ petals is a Bestvina-Feighn-Handel representative. The filtration of the graph is given by $\emptyset \varsubsetneq \{x_1,\cdots,x_n\} \varsubsetneq \{x_1,\cdots,x_n\} \cup \{y\}$. The image of the highest edge $y$ cover $\{ x_1 , x_2 \}$ but not the whole lower stratum $\{x_1,\cdots,x_n \}$ as it is the case when considering the ${n}^{\mathrm{th}}$-group of Formanek - Procesi.
\end{remark}

\noindent {\em Acknowledgements:} The author would like to thank M.~Lustig (Universit\'e d'Aix-Marseille III, Marseille) for giving to him, while working on other topics, the example of free-by-free group we deal with in this paper. It is a pleasure to thank P.A.~ Cherix (Universit\'e de Gen\`eve, Geneva) who introduced the author to the Haagerup property seven years ago.  G.N.~Arzhantseva (Universit\"at Wien, Vienna) and P.A.~Cherix evoked the question of the a-T-menability of free-by-free, and surface-by-free groups. At that time in Geneva, the interest of W.~Pitsch (U.A.B., Barcelona) in this question was a source of motivation. For these reasons, all three of them deserve the gratitude of the author. He is moreover indebted to G.N.~Arzhantseva for telling him about the Question cited in the Introduction. A part of these acknowledgements also goes to G.N.~Arzhantseva, M.~Lustig and Y.~Stalder for their remarks about preliminary versions of the paper. Last but not least, many thanks are due to the referee, who provided a great help to correct some mistakes and gave invaluable comments to make the paper better: among many other things he indicated to the author the results of Chatterji-Niblo \cite{ChatterjiNiblo} and Nica \cite{Nica}, and gave him the idea of looking for the dimension of the cube complex.

\bibliographystyle{plain}
\bibliography{biblioHaagerup}

\end{document}